\newtheorem{theorem}{Theorem}
\newtheorem{lemma}[theorem]{Lemma}
\theoremstyle{definition}
{

\newtheorem{problem}{Problem}
}
\long\def\symbolfootnote[#1]#2{\begingroup
\def\thefootnote{\fnsymbol{footnote}}\footnote[#1]{#2}\endgroup}
\newcommand{\red}[1][\sigma]{\mathrm{red}(#1)}
\newcommand{\sg}{\sigma}
\newcommand{\mmp}{\mathrm{mmp}}
\newcommand{\MMP}{\mathrm{MMP}}
\def\A{\mathcal{A}}
\newcommand{\fig}[2]{\begin{figure}[ht]
\centerline{\scalebox{.66}{\epsfig{file=#1.eps}}}
\caption{#2}
\label{fig:#1}
\end{figure}}
\newcommand{\shadetheboxes}[1]{
	\foreach \x/\y in {#1}
      	\fill[pattern color = black!65, pattern=north east lines] (\x,\y) rectangle +(1,1);
	}
\newcommand{\drawthegrid}[1]{
	\draw (0.01,0.01) grid (#1+0.99,#1+0.99);
	}
\newcommand{\drawtheclpattern}[1]{
	\foreach \x/\y in {#1}
      	\filldraw (\x,\y) circle (6pt);
	}
\newcommand{\drawspecialbox}[1]{
	\foreach \x/\y/\z/\w/\A in {#1}
		{
       		\fill[color = white!100, opacity=1, rounded corners = 1.5pt] (\x+0.125,\y+0.125) rectangle (\z-0.125,\w-0.125);
       		\draw[color = black, rounded corners = 1.5pt] (\x+0.125,\y+0.125) rectangle (\z-0.125,\w-0.125);
       		\fill[black] (\x/2+\z/2,\y/2+\w/2) node {$\scriptstyle\A$};
       	}
    }
\newcommand{\mmpattern}[5]{									
  \raisebox{0.6ex}{
  \begin{tikzpicture}[scale=0.35, baseline=(current bounding box.center), #1]
  \useasboundingbox (0.0,-0.1) rectangle (#2+1.4,#2+1.1);
    
    \shadetheboxes{#4}
    
    \drawthegrid{#2}
    
    \drawspecialbox{#5}
    
    \drawtheclpattern{#3}

  \end{tikzpicture}}
}
\title{Quadrant marked mesh patterns in $132$-avoiding permutations II}
\author{
Sergey Kitaev \\
\small University of Strathclyde\\[-0.8ex]
\small Livingstone Tower, 26 Richmond Street\\[-0.8ex]
\small Glasgow G1 1XH, United Kingdom\\[-0.8ex]
\small \texttt{sergey.kitaev@cis.strath.ac.uk}
\and
Jeffrey Remmel \\
\small Department of Mathematics\\[-0.8ex]
\small University of California, San Diego\\[-0.8ex]
\small La Jolla, CA 92093-0112. USA\\[-0.8ex]
\small \texttt{jremmel@ucsd.edu}
\and
Mark Tiefenbruck\\[-0.8ex]
\small Department of Mathematics\\[-0.8ex]
\small University of California, San Diego\\[-0.8ex]
\small La Jolla, CA 92093-0112. USA\\[-0.8ex]
\small \texttt{mtiefenb@math.ucsd.edu}
}
\date{\small Submitted: Date 1;  Accepted: Date 2;
 Published: Date 3.\\
\small MR Subject Classifications: 05A15, 05E05}
\begin{document}
\maketitle

\begin{abstract}
\noindent 
Given a permutation $\sg = \sg_1 \ldots \sg_n$ in the symmetric group 
$S_n$, we say that $\sg_i$ matches the marked mesh pattern 
$MMP(a,b,c,d)$ in $\sg$ if there are at least 
$a$ points to the right of $\sg_i$ in $\sg$ which are greater than 
$\sg_i$, at least $b$ points to the left of $\sg_i$ in $\sg$ which 
are greater than $\sg_i$,  at least $c$ points to the left of 
$\sg_i$ in $\sg$ which are smaller  than $\sg_i$, and 
at least  $d$ points to the right of $\sg_i$ in $\sg$ which 
are smaller than $\sg_i$.

This paper is continuation of the systematic study of the distribution 
of quadrant marked mesh patterns in 132-avoiding permutations 
started in \cite{kitremtie} where 
we mainly studied the distribution of the number of matches 
of $MMP(a,b,c,d)$ in 132-avoiding permutations 
where exactly one of $a,b,c,d$ is greater 
than zero and the remaining elements are zero. In this paper, 
we study  the distribution of the number of matches 
of $MMP(a,b,c,d)$ in 132-avoiding permutations 
where exactly two of $a,b,c,d$ are greater 
than zero and the remaining elements are zero. 
We provide explicit recurrence relations to enumerate our objects which 
can be used to give closed forms for the generating functions associated 
with such distributions. In many  cases, we provide combinatorial explanations of the coefficients that appear in our generating functions. The case of quadrant marked mesh patterns $MMP(a,b,c,d)$ where three or more of  $a,b,c,d$ are 
constrained to be greater than 0 will be studied in \cite{kitremtieIII}.\\

\noindent {\bf Keywords:} permutation statistics, quadrant marked mesh pattern, distribution, Pell numbers 
\end{abstract}

\tableofcontents

\section{Introduction}

The notion of mesh patterns was introduced by Br\"and\'en and Claesson \cite{BrCl} to provide explicit expansions for certain permutation statistics as, possibly infinite, linear combinations of (classical) permutation patterns.  This notion was further studied in \cite{AKV,HilJonSigVid,kitlie,kitrem,kitremtie,Ulf}.

Kitaev and Remmel \cite{kitrem} initiated the systematic study of distribution of quadrant marked mesh patterns on permutations. The study was extended to 132-avoiding permutations by Kitaev, Remmel and Tiefenbruck in \cite{kitremtie}, and the present paper continues this line of research. 
Kitaev and Remmel also studied the distribution of quadrant marked 
mesh patterns in up-down and down-up permutations \cite{kitrem2,kitrem3}. 

Let $\sigma = \sg_1 \ldots \sg_n$ be a permutation written in one-line notation. Then we will consider the 
graph of $\sg$, $G(\sg)$, to be the set of points $(i,\sg_i)$ for 
$i =1, \ldots, n$.  For example, the graph of the permutation 
$\sg = 471569283$ is pictured in Figure 
\ref{fig:basic}.  Then if we draw a coordinate system centered at a 
point $(i,\sg_i)$, we will be interested in  the points that 
lie in the four quadrants I, II, III, and IV of that 
coordinate system as pictured 
in Figure \ref{fig:basic}.  For any $a,b,c,d \in  
\mathbb{N} = \{0,1,2, \ldots \}$ and any $\sg = \sg_1 \ldots \sg_n \in S_n$, the set of all permutations of length $n$, we say that $\sg_i$ matches the 
quadrant marked mesh pattern $\MMP(a,b,c,d)$ in $\sg$ if, 
in $G(\sg)$ relative 
to the coordinate system which has the point $(i,\sg_i)$ as its  
origin, there are at least $a$ points in quadrant I, 
at least $b$ points in quadrant II, at least $c$ points in quadrant 
III, and at least $d$ points in quadrant IV.  
For example, 
if $\sg = 471569283$, the point $\sg_4 =5$  matches 
the marked mesh pattern $\MMP(2,1,2,1)$ since in $G(\sg)$ relative 
to the coordinate system with the origin at $(4,5)$,  
there are 3 points in quadrant I, 
1 point in quadrant II, 2 points in quadrant III, and 2 points in 
quadrant IV.  Note that if a coordinate 
in $\MMP(a,b,c,d)$ is 0, then there is no condition imposed 
on the points in the corresponding quadrant. 

In addition, we shall 
consider patterns  $\MMP(a,b,c,d)$ where 
$a,b,c,d \in \mathbb{N} \cup \{\emptyset\}$. Here when 
a coordinate of $\MMP(a,b,c,d)$ is the empty set, then for $\sg_i$ to match  
$\MMP(a,b,c,d)$ in $\sg = \sg_1 \ldots \sg_n \in S_n$, 
it must be the case that there are no points in $G(\sg)$ relative 
to the coordinate system with the origin at $(i,\sg_i)$ in the corresponding 
quadrant. For example, if $\sg = 471569283$, the point 
$\sg_3 =1$ matches 
the marked mesh pattern $\MMP(4,2,\emptyset,\emptyset)$ since in 
$G(\sg)$ relative 
to the coordinate system with the origin at $(3,1)$, 
there are 6 points in $G(\sg)$ in quadrant I, 
2 points in $G(\sg)$ in quadrant II, no  points in 
both quadrants III  and IV.   We let 
$\mmp^{(a,b,c,d)}(\sg)$ denote the number of $i$ such that 
$\sg_i$ matches $\MMP(a,b,c,d)$ in~$\sg$.

\fig{basic}{The graph of $\sg = 471569283$.}

Note how the (two-dimensional) notation of \'Ulfarsson \cite{Ulf} for marked mesh patterns corresponds to our (one-line) notation for quadrant marked mesh patterns. For example,

\[
\MMP(0,0,k,0)=\mmpattern{scale=2.3}{1}{1/1}{}{0/0/1/1/k}\hspace{-0.25cm},\  \MMP(k,0,0,0)=\mmpattern{scale=2.3}{1}{1/1}{}{1/1/2/2/k}\hspace{-0.25cm},
\]

\[
\MMP(0,a,b,c)=\mmpattern{scale=2.3}{1}{1/1}{}{0/1/1/2/a} \hspace{-2.07cm} \mmpattern{scale=2.3}{1}{1/1}{}{0/0/1/1/b} \hspace{-2.07cm} \mmpattern{scale=2.3}{1}{1/1}{}{1/0/2/1/c} \ \mbox{ and }\ \ \ \MMP(0,0,\emptyset,k)=\mmpattern{scale=2.3}{1}{1/1}{0/0}{1/0/2/1/k}\hspace{-0.25cm}.
\]

Given a sequence $w = w_1 \ldots w_n$ of distinct integers,
let $\red[w]$ be the permutation found by replacing the
$i$-th largest integer that appears in $\sg$ by $i$.  For
example, if $\sg = 2754$, then $\red[\sg] = 1432$.  Given a
permutation $\tau=\tau_1 \ldots \tau_j$ in the symmetric group $S_j$, we say that the pattern $\tau$ {\em occurs} in $\sg = \sg_1 \ldots \sg_n \in S_n$ provided   there exists 
$1 \leq i_1 < \cdots < i_j \leq n$ such that 
$\red[\sg_{i_1} \ldots \sg_{i_j}] = \tau$.   We say 
that a permutation $\sg$ {\em avoids} the pattern $\tau$ if $\tau$ does not 
occur in $\sg$. Let $S_n(\tau)$ denote the set of permutations in $S_n$ 
which avoid $\tau$. In the theory of permutation patterns, $\tau$ is called a {\em classical pattern}. See \cite{kit} for a comprehensive introduction to 
the study of patterns in permutations. 

It has been a rather popular direction of research in the literature on permutation patterns to study permutations avoiding a 3-letter pattern subject to extra restrictions (see \cite[Subsection 6.1.5]{kit}). In \cite{kitremtie},
we started the study of the generating functions 
\begin{equation*} \label{Rabcd}
Q_{132}^{(a,b,c,d)}(t,x) = 1 + \sum_{n\geq 1} t^n  Q_{n,132}^{(a,b,c,d)}(x)
\end{equation*}
where for  any $a,b,c,d \in \{\emptyset\} \cup \mathbb{N}$, 
\begin{equation*} \label{Rabcdn}
Q_{n,132}^{(a,b,c,d)}(x) = \sum_{\sg \in S_n(132)} x^{\mmp^{(a,b,c,d)}(\sg)}.
\end{equation*}
For any $a,b,c,d$, we will write $Q_{n,132}^{(a,b,c,d)}(x)|_{x^k}$ for 
the coefficient of $x^k$ in $Q_{n,132}^{(a,b,c,d)}(x)$.

There is one obvious symmetry in this case which is induced 
by the fact that if $\sg \in S_n(132)$, then $\sg^{-1} \in S_n(132)$. 
That is, the following lemma was proved in  \cite{kitremtie}.

\begin{lemma}\label{sym} {\rm (\cite{kitremtie})}
For any $a,b,c,d \in \{\emptyset\} \cup \mathbb{N}$, 
\begin{equation*}
Q_{n,132}^{(a,b,c,d)}(x) = Q_{n,132}^{(a,d,c,b)}(x). 
\end{equation*}
\end{lemma}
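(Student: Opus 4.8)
The plan is to exploit the involution $\sg \mapsto \sg^{-1}$ on $S_n(132)$. First I would recall the standard fact that if $\sg \in S_n(132)$ then $\sg^{-1} \in S_n(132)$: since the pattern $132$ equals its own inverse in $S_3$, an occurrence of $132$ in $\sg^{-1}$ pulls back to an occurrence of $132$ in $\sg$. Hence inversion is a bijection of $S_n(132)$ onto itself, and it suffices to show that $\mmp^{(a,b,c,d)}(\sg) = \mmp^{(a,d,c,b)}(\sg^{-1})$ for every $\sg \in S_n(132)$.

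Next comes the geometric heart of the argument. Passing from $G(\sg)$ to $G(\sg^{-1})$ is reflection of the plane across the line $y=x$, which sends the point $(i,\sg_i)$ to the point $(\sg_i,i)$. Under this reflection, quadrant I relative to a chosen origin is carried to quadrant I and quadrant III to quadrant III, while quadrants II and IV are interchanged; concretely, the inequalities $j<i$, $\sg_j>\sg_i$ (quadrant II relative to $(i,\sg_i)$) transform into ``first coordinate $>\sg_i$, second coordinate $<i$'' (quadrant IV relative to $(\sg_i,i)$), and symmetrically for II and IV reversed. Therefore $\sg_i$ matches $\MMP(a,b,c,d)$ in $\sg$ if and only if the entry $i=(\sg^{-1})_{\sg_i}$ matches $\MMP(a,d,c,b)$ in $\sg^{-1}$. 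I would also note that this reasoning is insensitive to whether a parameter is a natural number or $\emptyset$, because the condition ``no points in this quadrant'' is likewise preserved by swapping quadrants II and IV.

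Summing the per-position correspondence over all $i$ gives $\mmp^{(a,b,c,d)}(\sg)=\mmp^{(a,d,c,b)}(\sg^{-1})$, and then reindexing the defining sum by $\tau=\sg^{-1}$ (legitimate by the first step) completes the proof:
\begin{equation*}
Q_{n,132}^{(a,b,c,d)}(x) = \sum_{\sg \in S_n(132)} x^{\mmp^{(a,b,c,d)}(\sg)} = \sum_{\sg \in S_n(132)} x^{\mmp^{(a,d,c,b)}(\sg^{-1})} = Q_{n,132}^{(a,d,c,b)}(x).
\end{equation*}
The only delicate point—mild, but the place where an error would creep in—is the bookkeeping in the quadrant correspondence: one must remember that the origin itself moves from $(i,\sg_i)$ to $(\sg_i,i)$ under the reflection, and carefully verify the II~$\leftrightarrow$~IV swap (and the fixing of I and III) directly from the defining inequalities rather than by a vague appeal to symmetry.
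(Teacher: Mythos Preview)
Your argument is correct and follows exactly the approach the paper indicates: the lemma is attributed to \cite{kitremtie} and the only justification given here is the sentence that the symmetry ``is induced by the fact that if $\sg \in S_n(132)$, then $\sg^{-1} \in S_n(132)$,'' which is precisely the involution you exploit. Your explicit verification of the quadrant bookkeeping (I and III fixed, II and IV swapped under reflection in $y=x$) fills in the details the paper leaves implicit.
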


In \cite{kitremtie}, we studied the generating 
functions $Q_{132}^{(k,0,0,0)}(t,x)$,  
$Q_{132}^{(0,k,0,0)}(t,x) = Q_{132}^{(0,0,0,k)}(t,x)$, and 
$Q_{132}^{(0,0,k,0)}(t,x)$  where $k$ can be either 
the empty set or a positive integer as well as the 
generating functions $Q_{132}^{(k,0,\emptyset,0)}(t,x)$ and 
$Q_{132}^{(\emptyset,0,k,0)}(t,x)$. We also showed 
that sequences of the form $(Q_{n,132}^{(a,b,c,d)}(x)|_{x^r})_{n \geq s}$ 
count a variety of combinatorial objects that appear 
in the {\em On-line Encyclopedia of Integer Sequences} (OEIS) \cite{oeis}.
Thus, our results gave new combinatorial interpretations 
of certain classical sequences such as the Fine numbers and the Fibonacci 
numbers  as well as provided certain sequences that appear in the OEIS 
with a combinatorial interpretation where none had existed before. Another particular result of our studies in \cite{kitremtie} is enumeration of permutations avoiding simultaneously the patterns 132 and 1234.

The main goal of this paper is to continue the study of 
 $Q_{132}^{(a,b,c,d)}(t,x)$ and combinatorial interpretations of 
sequences of the form 
$(Q_{n,132}^{(a,b,c,d)}(x)|_{x^r})_{n \geq s}$ in 
the case where $a,b,c,d \in \mathbb{N}$ and exactly two of 
these parameters are non-zero. The case when at least three of the parameters are non-zero will be studied in \cite{kitremtieIII}.

Next we list several 
results from \cite{kitremtie} which we need in this paper.  

\begin{theorem}\label{thm:Qk000} (\cite[Theorem 4]{kitremtie})
\begin{equation*}\label{eq:Q0000}
Q_{132}^{(0,0,0,0)}(t,x) =  C(xt) = \frac{1-\sqrt{1-4xt}}{2xt}
\end{equation*}
and, for $k \geq 1$, 
\begin{equation*}\label{Qk000}
Q_{132}^{(k,0,0,0)}(t,x) = \frac{1}{1-tQ_{132}^{(k-1,0,0,0)}(t,x)}.
\end{equation*}
Hence 
\begin{equation*}\label{eq:Q100(0)}
Q_{132}^{(1,0,0,0)}(t,0) = \frac{1}{1-t}
\end{equation*}
and, for $k \geq 2$, 
\begin{equation}\label{x=0Qk000}
Q_{132}^{(k,0,0,0)}(t,0) = \frac{1}{1-tQ_{132}^{(k-1,0,0,0)}(t,0)}.
\end{equation}
\end{theorem}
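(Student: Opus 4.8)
The plan is to exploit the standard structural decomposition of $132$-avoiding permutations and track the quadrant-I statistic through it. Given $\sg = \sg_1 \ldots \sg_n \in S_n(132)$ with $n \geq 1$, let $i$ be the position with $\sg_i = n$. First I would observe that $132$-avoidance forces every entry to the left of position $i$ to exceed every entry to its right: if $j < i < \ell$ with $\sg_j < \sg_\ell$, then $\sg_j\sg_i\sg_\ell$ is an occurrence of $132$ (as $\sg_j < \sg_\ell < n = \sg_i$). Hence $A := \red[\sg_1 \ldots \sg_{i-1}]$ is a $132$-avoiding permutation on the values $\{n-i+1,\ldots,n-1\}$, $B := \red[\sg_{i+1}\ldots\sg_n]$ is a $132$-avoiding permutation on $\{1,\ldots,n-i\}$, and every element of $S_n(132)$ arises this way for a unique choice of $i$, $A$, $B$. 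This is exactly the decomposition underlying the Catalan recursion $|S_n(132)| = \sum_{i=1}^n |S_{i-1}(132)|\,|S_{n-i}(132)|$.

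Next I would analyze how matches of $\MMP(k,0,0,0)$ (entries having at least $k$ larger entries to their right) distribute across this decomposition. The entry $\sg_i = n$ has nothing larger to its right, so it matches $\MMP(k,0,0,0)$ if and only if $k = 0$. For an entry in the left block, its quadrant-I points in $G(\sg)$ are precisely its quadrant-I points within $A$ together with the single point $(i,n)$ (no entry of the right block lies above it, since those values are all smaller); thus for $k \geq 1$ such an entry matches $\MMP(k,0,0,0)$ in $\sg$ iff it matches $\MMP(k-1,0,0,0)$ in $A$. For an entry in the right block, its quadrant-I points in $G(\sg)$ coincide with those within $B$, so it matches $\MMP(k,0,0,0)$ in $\sg$ iff it matches $\MMP(k,0,0,0)$ in $B$. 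Therefore, for $k \geq 1$,
\[
\mmp^{(k,0,0,0)}(\sg) = \mmp^{(k-1,0,0,0)}(A) + \mmp^{(k,0,0,0)}(B),
\]
whereas for $k = 0$ every entry matches trivially, giving $\mmp^{(0,0,0,0)}(\sg) = 1 + (i-1) + (n-i) = n$.

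Then I would pass to generating functions. Summing $x^{\mmp^{(k,0,0,0)}(\sg)}$ over $\sg \in S_n(132)$ and splitting on the position $i$ of $n$ gives, for $k \geq 1$,
\[
Q_{n,132}^{(k,0,0,0)}(x) = \sum_{i=1}^n Q_{i-1,132}^{(k-1,0,0,0)}(x)\,Q_{n-i,132}^{(k,0,0,0)}(x),
\]
and multiplying by $t^n$, summing over $n \geq 1$, and adding the $n=0$ term yields
\[
Q_{132}^{(k,0,0,0)}(t,x) = 1 + t\,Q_{132}^{(k-1,0,0,0)}(t,x)\,Q_{132}^{(k,0,0,0)}(t,x),
\]
which rearranges to $Q_{132}^{(k,0,0,0)}(t,x) = 1/(1 - t\,Q_{132}^{(k-1,0,0,0)}(t,x))$. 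For $k = 0$, the same decomposition gives $Q_{132}^{(0,0,0,0)}(t,x) = 1 + xt\,\bigl(Q_{132}^{(0,0,0,0)}(t,x)\bigr)^2$, the defining functional equation for $C(xt)$, hence $Q_{132}^{(0,0,0,0)}(t,x) = C(xt) = (1-\sqrt{1-4xt})/(2xt)$. The ``Hence'' statements then follow by setting $x = 0$: since $C(xt)|_{x=0} = C(0) = 1$, the $k=1$ instance of the recurrence gives $Q_{132}^{(1,0,0,0)}(t,0) = 1/(1-t)$, and for $k \geq 2$ one substitutes $x = 0$ directly into the recurrence.

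The argument is largely bookkeeping; the one point I would take care to state explicitly (as it is reused throughout the paper) is the claim that, relative to a coordinate system centered at an entry of the left block, the entries of the right block contribute nothing to quadrant I while $n$ contributes exactly one point there — and symmetrically that entries of the left block contribute to no quadrant of an entry of the right block. Everything else is routine summation. This is \cite[Theorem 4]{kitremtie}, reproduced here for completeness.
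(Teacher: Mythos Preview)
Your proof is correct and follows exactly the approach the paper uses throughout: decompose $\sg\in S_n(132)$ by the position $i$ of $n$ into blocks $A_i(\sg)$ and $B_i(\sg)$, observe that quadrant~I of a left-block entry gains precisely the single point $n$ while quadrant~I of a right-block entry is unchanged, and pass to generating functions. The paper itself does not reprove this theorem (it is quoted from \cite{kitremtie}), but your argument is precisely the $\ell=0$ specialization of the paper's proof of Theorem~\ref{thm:Qk0l0}, so there is no methodological difference.
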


\begin{theorem}\label{thm:Q00k0} (\cite[Theorem 8]{kitremtie}) 
For $k \geq 1$, 
\begin{align}\label{gf00k0}
Q_{132}^{(0,0,k,0)}(t,x)&=\frac{1+(tx-t)(\sum_{j=0}^{k-1}C_jt^j) - 
\sqrt{(1+(tx-t)(\sum_{j=0}^{k-1}C_jt^j))^2 -4tx}}{2tx}\nonumber\\
&=\frac{2}{1+(tx-t)(\sum_{j=0}^{k-1}C_jt^j) + \sqrt{(1+(tx-t)(\sum_{j=0}^{k-1}C_jt^j))^2 -4tx}}\notag
\end{align}
and  
\begin{equation*}
Q_{132}^{(0,0,k,0)}(t,0) = \frac{1}{1-t(C_0+C_1 t+\cdots +C_{k-1}t^{k-1})}.
\end{equation*}
\end{theorem}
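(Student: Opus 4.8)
The plan is to combine the standard ``first return'' decomposition of $132$-avoiding permutations with the observation that, for the pattern $\MMP(0,0,k,0)$, the number of points in quadrant~III of an entry depends only on the block of the decomposition that contains that entry.

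Recall that if $\sg = \sg_1\cdots\sg_n \in S_n(132)$ and $\sg_i = n$, then writing $\sg = A\,n\,B$ with $|A| = i-1$, the avoidance of $132$ forces every entry of $A$ to exceed every entry of $B$, so that $\red[A] \in S_{i-1}(132)$ and $\red[B] \in S_{n-i}(132)$; conversely every choice of $i$ together with such a pair $(\red[A],\red[B])$ yields a unique $\sg \in S_n(132)$. Now fix an entry $\sg_j$ of $\sg$. If $j < i$, then every entry to the left of $\sg_j$ lies in $A$, so the number of points in quadrant~III of $\sg_j$ equals the corresponding count in $\red[A]$. If $j > i$, then the entries to the left of $\sg_j$ that are smaller than $\sg_j$ all lie in $B$, because the entries of $A$ and the entry $n$ are all larger than $\sg_j$; hence the quadrant~III count of $\sg_j$ equals the corresponding count in $\red[B]$. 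Finally, the entry $n$ itself has exactly $|A| = i-1$ points in its quadrant~III. Therefore
\[
\mmp^{(0,0,k,0)}(\sg) = \mmp^{(0,0,k,0)}(\red[A]) + \mmp^{(0,0,k,0)}(\red[B]) + \varepsilon,
\]
where $\varepsilon = 1$ if $i-1 \geq k$ and $\varepsilon = 0$ otherwise.

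Summing the weight $t^{|\sg|}x^{\mmp^{(0,0,k,0)}(\sg)}$ over all $132$-avoiding $\sg$ (the empty permutation contributing $1$, a nonempty $\sg = A\,n\,B$ contributing $t$ for $n$, times the weight of $\red[A]$, times $x^{\varepsilon}$, times the weight of $\red[B]$, with $A$ and $B$ ranging independently), and noting that a permutation of length $m < k$ has no entry matching $\MMP(0,0,k,0)$ so that $Q_{m,132}^{(0,0,k,0)}(x) = |S_m(132)| = C_m$ for $m < k$, we obtain, with $Q = Q_{132}^{(0,0,k,0)}(t,x)$ and $D = \sum_{j=0}^{k-1}C_j t^j$, the functional equation
\[
Q = 1 + tQ\bigl(D + x(Q - D)\bigr),
\]
the factor $D$ collecting the cases $|A| < k$ (where $\varepsilon = 0$) and the factor $x(Q-D)$ collecting the cases $|A| \geq k$ (where $\varepsilon = 1$). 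Clearing denominators, this is the quadratic $txQ^2 - (1 + (tx-t)D)Q + 1 = 0$. Applying the quadratic formula and choosing the branch with $Q|_{t=0} = 1$ --- which forces the minus sign in front of the radical --- yields the first displayed form of $Q_{132}^{(0,0,k,0)}(t,x)$, and rationalizing the numerator (multiply numerator and denominator by $1 + (tx-t)D + \sqrt{(1+(tx-t)D)^2 - 4tx}$, so that the numerator collapses to $4tx$) yields the second. Setting $x = 0$ in the quadratic annihilates the $Q^2$ term and leaves $(1 - tD)Q = 1$, which is exactly the stated formula for $Q_{132}^{(0,0,k,0)}(t,0)$.

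The only step that genuinely invokes $132$-avoidance, and hence the only place where care is needed, is the claim that an entry of $B$ never sees an entry of $A$ (or the entry $n$) in its quadrant~III; granting that, the factorization of the generating function and the reduction to a quadratic are purely formal. As a sanity check, for $k = 1$ one has $D = 1$, and setting $x = 0$ gives $Q_{132}^{(0,0,1,0)}(t,0) = 1/(1-t)$: indeed the only $132$-avoiding permutation with no entry having a smaller entry to its left is the decreasing permutation $n(n-1)\cdots 1$, one for each length, confirming that the bound ``$m < k$'' carries no off-by-one error.
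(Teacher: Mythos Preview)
Your proof is correct. The paper does not actually prove this theorem --- it is quoted from \cite{kitremtie} --- but your argument follows exactly the decomposition-by-position-of-$n$ strategy that the paper (and its predecessor) uses throughout: split $\sg = A\,n\,B$, observe that $\mmp^{(0,0,k,0)}$ restricts cleanly to $A$ and $B$ because entries of $A\cup\{n\}$ lie in quadrant~II (not~III) of any entry of $B$, account for the contribution of $n$ itself via the indicator $[\,|A|\ge k\,]$, and read off the quadratic in $Q$. This is the intended argument, and your derivation of the functional equation $txQ^2-(1+(tx-t)D)Q+1=0$ and its consequences is clean.
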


It follows from Lemma \ref{sym} that $Q_{132}^{(0,k,0,0)}(t,x) = 
Q_{132}^{(0,0,0,k)}(t,x)$ for all $k \geq 1$. Thus, our next 
theorem (obtained in \cite{kitremtie}) gives an expression for 
$Q_{132}^{(0,k,0,0)}(t,x) = Q_{132}^{(0,0,0,k)}(t,x)$.

\begin{theorem}\label{thm:Q0k00} (Theorem 11 of \cite{kitremtie}) 
\begin{equation*}\label{Q0100}
Q_{132}^{(0,1,0,0)}(t,x) = Q_{132}^{(0,0,0,1)}(t,x) = \frac{1}{1-tC(tx)}.
\end{equation*}
For $k > 1$, 
\begin{equation*}\label{Q0100-}
Q_{132}^{(0,k,0,0)}(t,x) = Q_{132}^{(0,0,0,k)}(t,x) = \frac{1+t\sum_{j=0}^{k-2} C_j t^j
(Q_{132}^{(0,k-1-j,0,0)}(t,x) -C(tx))}{1-tC(tx)}
\end{equation*}
and 
\begin{equation*}\label{x=0Q0100-}
Q_{132}^{(0,k,0,0)}(t,0) =Q_{132}^{(0,0,0,k)}(t,0) = \frac{1+t\sum_{j=0}^{k-2} C_j t^j
(Q_{132}^{(0,k-1-j,0,0)}(t,0) -1)}{1-t}.
\end{equation*}
\end{theorem}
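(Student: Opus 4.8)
The plan is to establish the recurrence by conditioning on the position of the largest element $n$ in a $132$-avoiding permutation $\sigma \in S_n(132)$. Recall the standard decomposition: if $\sigma_i = n$, then every entry to the left of position $i$ must be larger than every entry to the right of position $i$ (otherwise a $132$ pattern is created using $n$ as the ``$3$''). Hence $\sigma = A\, n\, B$ where $A$ is a $132$-avoiding permutation on the top $i-1$ values $\{n-i+1,\dots,n-1\}$ and $B$ is a $132$-avoiding permutation on the bottom values $\{1,\dots,n-i\}$. First I would track how each entry's $\MMP(0,k,0,0)$-count behaves under this decomposition. For an entry in $B$, the entries of $A$ together with $n$ all lie to its left and are all larger, so each such entry gains $i$ extra points in quadrant II; thus an entry of $B$ matches $\MMP(0,k,0,0)$ in $\sigma$ iff it matches $\MMP(0,\max(k-i,0),0,0)$ within $B$. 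For an entry in $A$, all of $B$ and $n$ lie to its right and below it, so quadrant~II is unaffected, and such an entry matches $\MMP(0,k,0,0)$ in $\sigma$ iff it matches it in $A$. Finally, $n$ itself has $i-1$ points in quadrant~II (those of $A$), so $n$ contributes a match iff $i - 1 \geq k$.

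Next I would assemble the generating function. Writing $i-1 = j$ for the size of $A$, and summing over $j \geq 0$, the contribution of $B$ (of size $n-1-j$) gives a factor $Q_{132}^{(0,\max(k-j-1,0),0,0)}(t,x)$, the contribution of $A$ gives a factor counting $\MMP(0,k,0,0)$ on $132$-avoiders of size $j$, and $n$ contributes a factor $x$ when $j \geq k$ and $1$ otherwise. Splitting the sum over $j$ into the range $0 \le j \le k-2$ (where $B$ still ``sees'' a nontrivial parameter $k-1-j \geq 1$ and $n$ contributes $1$) and the range $j \geq k-1$ (where $B$ sees parameter $0$, i.e.\ a factor $C(tx) = Q_{132}^{(0,0,0,0)}(t,x)$, and $n$ contributes $x$ once $j \ge k$), I would obtain, after multiplying by $t$ for the element $n$ and summing over $n$, an equation of the form
\[
Q_{132}^{(0,k,0,0)}(t,x) = 1 + t\sum_{j=0}^{k-2} C_j t^j Q_{132}^{(0,k-1-j,0,0)}(t,x)\,Q_{132}^{(0,k,0,0)}(t,x)|_{\text{tail}} + \text{(tail term)},
\]
where the ``tail'' collects all $j \ge k-1$. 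The key simplification is that in the tail, $A$ ranges over \emph{all} $132$-avoiders (of every size $\geq k-1$) with its own $\MMP(0,k,0,0)$-statistic, so the tail sum rebuilds a copy of $Q_{132}^{(0,k,0,0)}(t,x)$ times $C(tx)$, up to the low-order correction terms for $j \le k-2$; this is exactly the mechanism that produces the denominator $1 - tC(tx)$.

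The main obstacle will be bookkeeping the boundary between the two ranges of $j$ correctly — in particular, handling the single value $j = k-1$, where $B$ has parameter $0$ but $n$ still contributes $1$ rather than $x$, and making sure the ``subtract $C(tx)$'' corrections in the numerator $t\sum_{j=0}^{k-2} C_j t^j (Q_{132}^{(0,k-1-j,0,0)}(t,x) - C(tx))$ account precisely for the discrepancy between the nontrivial factor $Q_{132}^{(0,k-1-j,0,0)}$ that the small values of $j$ actually produce and the factor $C(tx)$ that the generic tail term would have contributed in their place. Once the algebra is arranged so that every $j$ contributes a $C(tx)$-type base term plus a correction, collecting the $Q_{132}^{(0,k,0,0)}(t,x)$ terms on the left and dividing by $1 - tC(tx)$ yields the stated formula. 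The base case $k=1$ is the degenerate instance where the correction sum is empty, giving $Q_{132}^{(0,1,0,0)}(t,x) = 1/(1 - tC(tx))$; and setting $x=0$ throughout, using $C(0)=1$, specializes to the last displayed identity.
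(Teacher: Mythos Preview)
Your decomposition by the position of $n$ is the right approach and is presumably the one used in \cite{kitremtie} (the present paper does not reprove this theorem; it quotes it). However, there is a genuine error in your analysis of the element $n$ itself. You write that ``$n$ itself has $i-1$ points in quadrant~II (those of $A$), so $n$ contributes a match iff $i-1 \geq k$.'' This is false: the entries of $A$ lie to the \emph{left} of $n$ but they are all \emph{smaller} than $n$, so they sit in quadrant~III relative to $(i,n)$, not quadrant~II. Since $n$ is the maximum, quadrant~II is always empty for $n$, and hence $n$ \emph{never} matches $\MMP(0,k,0,0)$ for $k\ge 1$. (A related slip: for an entry of $A$, the element $n$ lies to its right and \emph{above}, not below; your conclusion there is still correct because quadrant~II is unaffected, but the description is off.)

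This error is not cosmetic; it propagates. With your extra factor of $x$ from $n$ whenever $j=i-1\ge k$, already the base case $k=1$ fails: splitting off $j=0$ and summing gives
\[
Q_{132}^{(0,1,0,0)}(t,x)-1 \;=\; tC(tx) \;+\; t\,x\,\bigl(Q_{132}^{(0,1,0,0)}(t,x)-1\bigr)\,C(tx),
\]
which solves to $Q_{132}^{(0,1,0,0)}(t,x)=1+tC(tx)^2$, not $1/(1-tC(tx))$; these agree only at $x=1$. Once you drop the spurious $x$ from $n$, the tail $j\ge k-1$ simply contributes $t\,C(tx)\bigl(Q_{132}^{(0,k,0,0)}(t,x)-\sum_{j=0}^{k-2}C_jt^j\bigr)$, the boundary case $j=k-1$ requires no special treatment, and the bookkeeping you anticipated as ``the main obstacle'' disappears: combining with the small-$j$ sum $t\sum_{j=0}^{k-2}C_jt^j\,Q_{132}^{(0,k-1-j,0,0)}(t,x)$ and solving for $Q_{132}^{(0,k,0,0)}$ immediately yields the stated formula.
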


As it was pointed out in \cite{kitremtie}, {\em avoidance} of a marked mesh pattern without quadrants containing the empty set can always be expressed in terms of multi-avoidance of (possibly many) classical patterns. Thus, among our results we will re-derive several known facts in permutation patterns theory. However, our main goals are more ambitious aimed at finding  distributions in question.

\section{$Q_{n,132}^{(k,0,\ell,0)}(x)$ where $k,\ell \geq 1$}

Throughout this paper, we shall classify the $132$-avoiding permutations 
$\sg = \sg_1 \ldots \sg_n$ by the position of $n$ 
in $\sg$. That is, let 
$S^{(i)}_n(132)$ denote the set of $\sg \in S_n(132)$ such 
that $\sg_i =n$. 

Clearly each $\sg \in  S_n^{(i)}(132)$ has the structure 
pictured in Figure \ref{fig:basic2}. That is, in the graph of 
$\sg$, the elements to the left of $n$, $A_i(\sg)$, have 
the structure of a $132$-avoiding permutation, the elements 
to the right of $n$, $B_i(\sg)$, have the structure of a 
$132$-avoiding permutation, and all the elements in 
$A_i(\sg)$ lie above all the elements in 
$B_i(\sg)$.  It is well-known that the number of $132$-avoiding 
permutations in $S_n$ is the {\em Catalan number} 
$C_n = \frac{1}{n+1} \binom{2n}{n}$ and the generating 
function for the $C_n$'s is given by 
\begin{equation*}\label{Catalan}
C(t) = \sum_{n \geq 0} C_n t^n = \frac{1-\sqrt{1-4t}}{2t}=
\frac{2}{1+\sqrt{1-4t}}.
\end{equation*}

\fig{basic2}{The structure of $132$-avoiding permutations.}

If $k \geq 1$, it is easy to 
compute a recursion for $Q_{n,132}^{(k,0,\ell,0)}(x)$ for any 
fixed $\ell \geq 1$. It is clear that $n$ can never match 
the pattern $\MMP(k,0,\ell,0)$ for $k \geq 1$ in any 
$\sg \in S_n(132)$.    
For $i \geq 1$,  it is easy to see that as we sum 
over all the permutations $\sg$ in $S_n^{(i)}(132)$, our choices 
for the structure for $A_i(\sg)$ will contribute a factor 
of $Q_{i-1,132}^{(k-1,0,\ell,0)}(x)$ to $Q_{n,132}^{(k,0,\ell,0)}(x)$ since 
none of the elements to the right of $n$ have 
any effect on whether an element in  $A_i(\sg)$ matches 
the  pattern $\MMP(k,0,\ell,0)$ and the presence of $n$ ensures 
that an element in $A_i(\sg)$ matches $\MMP(k,0,\ell,0)$ in $\sg$ if 
and only if it matches $\MMP(k-1,0,\ell,0)$ in $A_i(\sg)$. 
Similarly, our choices 
for the structure for $B_i(\sg)$ will contribute a factor 
of $Q_{n-i,132}^{(k,0,\ell,0)}(x)$ to $Q_{n,132}^{(k,0,\ell,0)}(x)$ since 
neither $n$ nor any of the elements to the left of $n$ have 
any effect on whether an element in  $B_i(\sg)$ matches 
the  pattern $\MMP(k,0,\ell,0)$. 
Thus, 
\begin{equation}\label{k0l0rec}
 Q_{n,132}^{(k,0,\ell,0)}(x) =  
\sum_{i=1}^n Q_{i-1,132}^{(k-1,0,\ell,0)}(x)\ 
Q_{n-i,132}^{(k,0,\ell,0)}(x).
\end{equation}
Multiplying both sides of (\ref{k0l0rec}) by $t^n$ and summing 
over all $n \geq 1$, we obtain that 
\begin{equation*}\label{k0l0rec2}
-1+Q_{132}^{(k,0,\ell,0)}(t,x) = 
t Q_{132}^{(k-1,0,\ell,0)}(t,x)\ Q_{132}^{(k,0,\ell,0)}(t,x)
\end{equation*}
so that we have the following theorem. 

\begin{theorem}\label{thm:Qk0l0} For all $k, \ell \geq 1$, 
\begin{equation}\label{k0l0gf}
Q_{132}^{(k,0,\ell,0)}(t,x) = 
\frac{1}{1-t Q_{132}^{(k-1,0,\ell,0)}(t,x)}.
\end{equation}
\end{theorem}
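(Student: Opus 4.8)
The plan is essentially to follow the combinatorial decomposition already outlined in the paragraph preceding the statement, which does almost all of the work. First I would recall the canonical structure of a $132$-avoiding permutation $\sg \in S_n^{(i)}(132)$: writing $\sg_i = n$, the entries to the left of position $i$ form a block $A_i(\sg)$ that is itself $132$-avoiding, the entries to the right form a block $B_i(\sg)$ that is $132$-avoiding, and every entry of $A_i(\sg)$ exceeds every entry of $B_i(\sg)$ (see Figure \ref{fig:basic2}). This is standard and does not require proof here.

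Next I would justify the recursion (\ref{k0l0rec}) by analyzing, for each $i$ with $1 \le i \le n$, how the statistic $\mmp^{(k,0,\ell,0)}$ behaves on $\sg \in S_n^{(i)}(132)$. The key observations are: (i) the entry $n$ itself never matches $\MMP(k,0,\ell,0)$ when $k \ge 1$, since there are no points in quadrant I relative to $n$; (ii) an entry lying in $B_i(\sg)$ sees nothing from $A_i(\sg)$ or from $n$ in its quadrants I and III (those points are either above-left or exactly $n$, hence not in quadrants I or III of a point of $B_i$), so it matches $\MMP(k,0,\ell,0)$ in $\sg$ iff it matches $\MMP(k,0,\ell,0)$ within $B_i(\sg)$; (iii) an entry lying in $A_i(\sg)$ sees none of the entries of $B_i(\sg)$ in its quadrants I or III, but it does see $n$ in quadrant I, so it matches $\MMP(k,0,\ell,0)$ in $\sg$ iff it matches $\MMP(k-1,0,\ell,0)$ within $A_i(\sg)$. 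Summing $x^{\mmp^{(k,0,\ell,0)}(\sg)}$ over $\sg \in S_n^{(i)}(132)$ thus factors as $Q_{i-1,132}^{(k-1,0,\ell,0)}(x)\, Q_{n-i,132}^{(k,0,\ell,0)}(x)$, and summing over $i$ gives (\ref{k0l0rec}). Here I would be careful that the cases $i=1$ (empty $A$, contributing $Q_{0,132}^{(\cdot)}(x)=1$) and $i=n$ (empty $B$) are handled by the convention $Q_{0,132}^{(\cdot)}(x)=1$, and that $k-1$ may be $0$, in which case $Q_{132}^{(0,0,\ell,0)}(t,x)$ is the already-known series from Theorem \ref{thm:Q00k0}.

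Finally I would pass to generating functions: multiply (\ref{k0l0rec}) by $t^n$, sum over $n \ge 1$, recognize the right-hand side as the Cauchy product $t\, Q_{132}^{(k-1,0,\ell,0)}(t,x)\, Q_{132}^{(k,0,\ell,0)}(t,x)$, and note the left-hand side is $Q_{132}^{(k,0,\ell,0)}(t,x) - 1$ because the constant term of $Q_{132}^{(k,0,\ell,0)}(t,x)$ is $1$. Solving the resulting linear equation for $Q_{132}^{(k,0,\ell,0)}(t,x)$ yields (\ref{k0l0gf}).

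Honestly, there is no real obstacle here: the statement is an immediate consequence of the displayed recursion, which itself is an instance of the standard first-return decomposition for $132$-avoiding permutations. The only point demanding a little care is the quadrant bookkeeping in step (ii)–(iii) above — making sure one has correctly identified which of $A_i(\sg)$, $n$, $B_i(\sg)$ contributes to quadrants I and III of a given entry — but this is routine once the picture in Figure \ref{fig:basic2} is in hand, and the parameters $b=d=0$ mean quadrants II and IV impose no constraints, so only the interaction with $n$ (quadrant I) matters.
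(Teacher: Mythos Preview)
Your proposal is correct and follows essentially the same approach as the paper: establish the recursion (\ref{k0l0rec}) via the standard decomposition of $\sg\in S_n^{(i)}(132)$ into $A_i(\sg)$, $n$, and $B_i(\sg)$ with the quadrant bookkeeping you describe, then multiply by $t^n$, sum over $n\ge 1$, and solve for $Q_{132}^{(k,0,\ell,0)}(t,x)$. Your treatment is slightly more explicit about the edge cases $i=1$, $i=n$, and $k-1=0$, but the argument is identical in substance.
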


Note that by Theorem \ref{thm:Q00k0}, we have an explicit formula 
for $Q_{132}^{(0,0,\ell,0)}(t,x)$ for all $\ell \geq 1$ so 
that we can then use the recursion  (\ref{k0l0gf}) to 
compute $Q_{132}^{(k,0,\ell,0)}(t,x)$ for all $k \geq 1$. 

\subsection{Explicit formulas for  $Q^{(k,0,\ell,0)}_{n,132}(x)|_{x^r}$}

Note that 
\begin{equation}\label{x=0k0l0gf}
Q_{132}^{(k,0,\ell,0)}(t,0) = 
\frac{1}{1-t Q_{132}^{(k-1,0,\ell,0)}(t,0)}.
\end{equation}
Since $Q_{132}^{(1,0,0,0)}(t,0) = Q_{132}^{(0,0,1,0)}(t,0) = \frac{1}{1-t}$,
it follows from the recursions 
(\ref{x=0Qk000}) and (\ref{x=0k0l0gf}) that 
for all $k \geq 2$,  
$Q_{132}^{(k,0,0,0)}(t,0) = Q_{132}^{(k-1,0,1,0)}(t,0)$. 
This is easy to see directly. That is, 
it is clear that if in $\sg \in S_n(132)$, $\sg_j$ matches 
$\MMP(k-1,0,1,0)$, then there is an 
$i < j$ such that $\sg_i < \sg_j$ so that 
$\sg_i$ matches $\MMP(k,0,0,0)$.  Vice versa, 
suppose that in $\sg \in S_n(132)$, $\sg_j$ matches 
$\MMP(k,0,0,0)$ where $k \geq 2$. Because 
$\sg$ is $132$-avoiding this means the elements in 
the first quadrant relative to the coordinate system with 
$(j,\sg_j)$ as the origin must be increasing. Thus, 
there exist $j < j_1 < \cdots < j_k \leq n$ such that 
$\sg_j < \sg_{j_1} < \cdots < \sg_{j_k}$ and, hence, 
$\sg_{j_1}$ matches $\MMP(k-1,0,1,0)$. 
Thus, the number of $\sg \in S_n(132)$ where 
$\mmp^{(k,0,0,0)}(\sg)=0$ is equal to the number of $\sg \in S_n(132)$ where 
$\mmp^{(k-1,0,1,0)}(\sg)=0$ for $k \geq 2$. 

In  \cite{kitremtie}, we computed the generating function 
$Q_{132}^{(k,0,0,0)}(t,0)$ for small $k$. Thus, we have that 
\begin{eqnarray*}
Q_{132}^{(2,0,0,0)}(t,0)= Q_{132}^{(1,0,1,0)}(t,0) &=&\frac{1-t}{1-2t};\\
Q_{132}^{(3,0,0,0)}(t,0)= Q_{132}^{(2,0,1,0)}(t,0)&=&\frac{1-2t}{1-3t+t^2};\\
Q_{132}^{(4,0,0,0)}(t,0)= Q_{132}^{(3,0,1,0)}(t,0)&=&\frac{1-3t+t^2}{1-4t+3t^2};\\
Q_{132}^{(5,0,0,0)}(t,0)= Q_{132}^{(4,0,1,0)}(t,0)&=&\frac{1-4t+3t^2}{1-5t+6t^2-t^3};\\
Q_{132}^{(6,0,0,0)}(t,0)= Q_{132}^{(5,0,1,0)}(t,0)&=&\frac{1-5t+6t^2-t^3}{1-6t+10t^2-4t^3}, \mbox{and}\\
Q_{132}^{(7,0,0,0)}(t,0)= Q_{132}^{(6,0,1,0)}(t,0)&=&\frac{1-6t+10t^3-4t^3}{1-7t+15t^2-10t^3+t^4}.
\end{eqnarray*}

Note that 
$Q_{132}^{(0,0,2,0)}(t,0) = \frac{1}{1-t-t^2}$ by 
Theorem \ref{thm:Q00k0}. Thus, by (\ref{x=0k0l0gf}), 
we can compute that 
\begin{eqnarray*}
Q_{132}^{(1,0,2,0)}(t,0) &=& \frac{1-t-t^2}{1-2t-t^2};\\
Q_{132}^{(2,0,2,0)}(t,0) &=& \frac{1-2t-t^2}{1-3t+t^3};\\
Q_{132}^{(3,0,2,0)}(t,0) &=& \frac{1-3t+t^3}{1-4t+2t^2+2t^3}, \ \mbox{and}\\
Q_{132}^{(4,0,2,0)}(t,0) &=& \frac{1-4t+2t^2+2t^3}{1-5t+5t^2+2t^3-t^4}.
\end{eqnarray*}
We note that $\{Q^{(1,0,2,0)}_{n,132}(0)\}_{n \geq 1}$ is the sequence 
of the Pell numbers which is A000129 in the OEIS. This result should be compared with a known fact \cite[page 250]{kit} that the avoidance of $123$, $2143$ and $3214$ simultaneously gives the Pell numbers (the avoidance of $\MMP(1,0,2,0)$ is equivalent to avoiding simultaneously $2134$ and $1234$).

\begin{problem} Find a combinatorial explanation of the fact that in $S_n$, the number of (132,2134,1234)-avoiding permutations is the same as the number of (123,2143,3214)-avoiding permutations. Can any of the known bijections between $132$- and $123$-avoiding permutations (see \cite[Chapter 4]{kit}) be of help here?\end{problem} 

The sequence $\{Q^{(2,0,2,0)}_{n,132}(0)\}_{n \geq 1}$ is sequence 
A052963 in the OEIS which has the generating function $\frac{1-t-t^2}{1-3t+t^3}$.
That is, $\frac{1-2t-t^2}{1-3t+t^3} -1 = t\frac{1-t-t^2}{1-3t+t^3}$. 
This sequence had no listed combinatorial interpretation so that we have now given a combinatorial interpretation to this sequence. 

Similarly, $Q_{132}^{(0,0,3,0)}(t,0) = \frac{1}{1-t-t^2-2t^3}$. Thus, by (\ref{x=0k0l0gf}), we can compute that 
\begin{eqnarray*}
Q_{132}^{(1,0,3,0)}(t,0) &=& \frac{1-t-t^2-2t^3}{1-2t-t^2-2t^3};\\
Q_{132}^{(2,0,3,0)}(t,0) &=& \frac{1-2t-t^2-2t^3}{1-3t-t^3+2t^4};\\
Q_{132}^{(3,0,3,0)}(t,0) &=& \frac{1-3t-t^3+2t^4}{1-4t+2t^2+4t^4}, \ \mbox{and}\\
Q_{132}^{(4,0,3,0)}(t,0) &=& \frac{1-4t+2t^2+4t^4}{1-5t+5t^2+5t^4-2t^5}.
\end{eqnarray*}

In this case, the sequence $(Q_{n,132}^{(1,0,3,0)}(0))_{n \geq 1}$ is  
sequence A077938 in the OEIS which has the generating 
function $\frac{1}{1-2t-t^2-2t^3}$. That is, 
$\frac{1-t-t^2-2t^3}{1-2t-t^2-2t^3} -1 = t\frac{1}{1-2t-t^2-2t^3}$. 
This sequence had no listed combinatorial interpretation so that we have now given a combinatorial interpretation to this sequence. 

We can also find the coefficient of the highest power of 
$x$ that occurs in $Q_{n,132}^{(k,0,\ell,0)}(x)$ for any $k,\ell \geq 1$. 
That is, it is easy to 
see that the maximum possible number of matches of $\MMP(k,0,\ell,0)$  
for a $\sg= \sg_1 \ldots \sg_n \in S_n(132)$ occurs when 
 $\sg_1 \ldots \sg_{\ell}$ is a  $132$-avoiding permutation in $S_{\ell}$ 
and $\sg_{\ell+1}\ldots \sg_{n}$ is an increasing sequence. 
Thus, we have the following theorem. 

\begin{theorem}\label{maxcoeff0k0l} 
For any $k,\ell \geq 1$ and $n \geq k +\ell +1$, 
the highest power of $x$ that occurs in $Q_{n,132}^{(k,0,\ell,0)}(x)$ 
is $x^{n-k-\ell}$ which appears with a coefficient of $C_\ell$.
\end{theorem}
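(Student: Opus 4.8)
The plan is to split the statement into three parts: (i) no $\sigma\in S_n(132)$ has more than $n-k-\ell$ matches of $\MMP(k,0,\ell,0)$; (ii) this value $n-k-\ell$ is actually attained; (iii) the number of $\sigma\in S_n(132)$ attaining it is $C_\ell$. Throughout I abbreviate $\mmp^{(k,0,\ell,0)}(\sigma)$ to $\mmp(\sigma)$.

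For the upper bound (i) I would argue that \emph{at least $k+\ell$ positions fail to match}, and $132$-avoidance is not even needed for this. First, positions $1,\dots,\ell$ never match, since a match at position $i$ requires at least $\ell$ entries to the left of $\sigma_i$, hence $i\ge\ell+1$. Next, if $\sigma$ has any matching position, let $p$ be the position carrying the largest value $v^\ast$ that occurs at a matching position; then $p\ge\ell+1$, and the match at $p$ produces at least $k$ entries to the right of position $p$ exceeding $v^\ast$. Each such entry sits at a position $>p\ge\ell+1$, and being larger than $v^\ast$ it cannot itself be a matching position; so we obtain $k$ non-matching positions in $\{\ell+2,\dots,n\}$, disjoint from $\{1,\dots,\ell\}$. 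Hence there are at least $k+\ell$ non-matching positions and $\mmp(\sigma)\le n-k-\ell$ (and if there are no matching positions the bound is trivial because $n\ge k+\ell+1$).

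For (ii) and (iii) I would first record the explicit family: for any $132$-avoiding $\tau$ of $\{1,\dots,\ell\}$, the permutation $\sigma=\tau\,(\ell+1)(\ell+2)\cdots n$ lies in $S_n(132)$ and a direct count gives $\mmp(\sigma)=n-k-\ell$, which proves (ii) and exhibits at least $C_\ell$ maximizers. For the converse I would induct on $k$ using the decomposition behind \eqref{k0l0rec}: writing $n=\sigma_p$ and $\sigma=A\,n\,B$ with $|A|=p-1$, $|B|=n-p$, one has $\mmp(\sigma)=\mmp^{(k-1,0,\ell,0)}(\red[A])+\mmp^{(k,0,\ell,0)}(\red[B])$. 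Feeding the upper bound from (i) into both summands bounds $\mmp(\sigma)$ by $\max(0,p-k-\ell)+\max(0,n-p-k-\ell)$, and maximizing this elementary expression over $1\le p\le n$ shows the value $n-k-\ell$ is reached only when $p=n$ (so $B$ is empty) and $\red[A]$ is itself a $\MMP(k-1,0,\ell,0)$-maximizer in $S_{n-1}(132)$. Iterating $k$ times (the inequality $n-j\ge(k-j)+\ell+1$ holding at each stage) reduces (iii) to counting the $\sigma\in S_m(132)$, $m=n-k\ge\ell+1$, in which \emph{every} position $\ge\ell+1$ matches $\MMP(0,0,\ell,0)$.

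The heart of the argument — and the step I expect to be the most delicate — is this last count. A position $i\ge\ell+1$ matches $\MMP(0,0,\ell,0)$ exactly when $\sigma_i$ is not among the $\ell$ smallest of $\sigma_1,\dots,\sigma_i$; an easy induction on $i$ then shows that once all these positions match, the set of $\ell$ smallest values seen so far never changes past step $\ell$, so it equals both $\{\sigma_1,\dots,\sigma_\ell\}$ and (taking $i=m$) the set $\{1,\dots,\ell\}$. Conversely $\{\sigma_1,\dots,\sigma_\ell\}=\{1,\dots,\ell\}$ clearly makes every later position match. Finally, among $\sigma\in S_m(132)$ with $\{\sigma_1,\dots,\sigma_\ell\}=\{1,\dots,\ell\}$, $132$-avoidance forces $\sigma_{\ell+1}\cdots\sigma_m$ to be increasing (any inversion there, together with $\sigma_1$, would form a $132$) and is otherwise equivalent to $\sigma_1\cdots\sigma_\ell$ being $132$-avoiding; so there are exactly $C_\ell$ of them, which together with the reduction above completes (ii) and (iii).
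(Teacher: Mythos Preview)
Your proof is correct. The paper itself offers only the single sentence preceding the theorem (``it is easy to see that the maximum \dots\ occurs when $\sigma_1\ldots\sigma_\ell$ is a $132$-avoiding permutation in $S_\ell$ and $\sigma_{\ell+1}\ldots\sigma_n$ is an increasing sequence''), so there is very little to compare against; your explicit family coincides with theirs, and you supply the justification the paper omits.

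Two remarks on what you add. Your upper bound (i) is a clean pigeonhole argument that does not use $132$-avoidance at all, which is a pleasant observation. For (iii), your induction on $k$ via the recursion \eqref{k0l0rec} is valid but heavier than necessary: once (i) is in hand, one can argue directly that in a maximizer the $k$ non-matching positions in $\{\ell+1,\dots,n\}$ must be exactly the positions of the values $n,n-1,\dots,n-k+1$, and (since $\sigma$ avoids $132$) the elements in quadrant~I relative to any matching point are increasing, forcing $\sigma_{\ell+1}\cdots\sigma_n$ to be the identity on $\{\ell+1,\dots,n\}$. Either route lands on the same $C_\ell$ permutations.
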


Given that we have computed the generating functions 
$ Q_{132}^{(0,0,\ell,0)}(t,x)$, we can then use 
(\ref{k0l0gf}) to compute the following. 

\begin{align*}
& Q_{132}^{(1,0,1,0)}(t,x)  =1+t+2 t^2+ (4+x)t^3+\left(8+5 x+x^2\right)t^4+
\left(16+17 x+8 x^2+x^3\right)t^5+\\
&\left(32+49 x+38 x^2+12 x^3+x^4\right)t^6+ 
\left(64+129 x+141 x^2+77 x^3+17 x^4+x^5\right)t^7+\\
&\left(128+321 x+453 x^2+361 x^3+143 x^4+23 x^5+x^6\right)t^8+\\
&\left(256+769 x+1326 x^2+1399 x^3+834 x^4+247 x^5+30 x^6+x^7\right)t^9+
\cdots. 
\end{align*}

\vspace{-0.5cm}

\begin{align*}
&Q_{132}^{(2,0,1,0)}(t,x)  =1+t+2 t^2+5 t^3+(13+x)t^4 +
\left(34+7 x+x^2\right)t^5+\ \ \ \ \ \ \ \ \ \ \ \ \ \ \ \ \ \ \ \ \ \ \ \ \\
&\left(89+32 x+10 x^2+x^3\right)t^6+ 
\left(233+122 x+59 x^2+14 x^3+x^4\right)t^7+ \\
& \left(610+422 x+272 x^2+106 x^3+19 x^4+x^5\right)t^8+\\
& \left(1597+1376 x+1090 x^2+591 x^3+182 x^4+25 x^5+x^6\right)t^9 + 
\cdots. 
\end{align*}

\vspace{-0.5cm}

\begin{align*}
&Q_{132}^{(3,0,1,0)}(t,x)   =1+t+2 t^2+5 t^3+14 t^4+ (41+x)t^5+ \left(122+9 x+x^2\right)t^6+\ \ \ \ \ \ \ \ \ \ \ \\
& \left(365+51 x+12 x^2+x^3\right)t^7+\left(1094+235 x+84 x^2+16 x^3+x^4\right)t^8+ \\
&\left(3281+966 x+454 x^2+139 x^3+21 x^4+x^5\right)t^9 + \cdots.
\end{align*}

We can explain several of the coefficients that appear 
in the polynomials $Q_{n,132}^{(k,0,1,0)}(x)$ for various $k$.

\begin{theorem}
$Q_{n,132}^{(1,0,1,0)}(0) = 2^{n-1}$ 
for $n \geq 1$.  
\end{theorem}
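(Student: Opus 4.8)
The plan is to identify $Q_{132}^{(1,0,1,0)}(t,0)$ in closed form and then read off the coefficient of $t^n$. First I would specialize Theorem \ref{thm:Qk0l0} at $k=\ell=1$ to get $Q_{132}^{(1,0,1,0)}(t,x) = \frac{1}{1 - tQ_{132}^{(0,0,1,0)}(t,x)}$ and set $x=0$. By the $k=1$ case of Theorem \ref{thm:Q00k0} we have $Q_{132}^{(0,0,1,0)}(t,0) = \frac{1}{1-t}$, so $Q_{132}^{(1,0,1,0)}(t,0) = \frac{1}{1-\frac{t}{1-t}} = \frac{1-t}{1-2t}$. Finally, $\frac{1-t}{1-2t} = 1 + \frac{t}{1-2t} = 1 + \sum_{n\geq 1}2^{n-1}t^n$, so $Q_{n,132}^{(1,0,1,0)}(0) = 2^{n-1}$ for all $n \geq 1$, which is the claim.

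An equivalent route, which also makes transparent why the count is $2^{n-1}$, is to argue directly from the recursion (\ref{k0l0rec}). Specializing it at $k=\ell=1$ and $x=0$, and using that $Q_{m,132}^{(0,0,1,0)}(0) = 1$ for every $m \geq 0$ (the unique $132$-avoiding permutation with no match of $\MMP(0,0,1,0)$ is the decreasing permutation, since $\sigma_i$ matches $\MMP(0,0,1,0)$ exactly when some smaller entry sits to its left), the recursion collapses to $Q_{n,132}^{(1,0,1,0)}(0) = \sum_{i=1}^{n} Q_{n-i,132}^{(1,0,1,0)}(0) = \sum_{j=0}^{n-1} Q_{j,132}^{(1,0,1,0)}(0)$, with $Q_{0,132}^{(1,0,1,0)}(0)=1$. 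A one-line induction then gives $Q_{n,132}^{(1,0,1,0)}(0) = 1 + \sum_{j=1}^{n-1}2^{j-1} = 2^{n-1}$ for $n \geq 1$.

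For the combinatorial content behind this recursion, I would classify $\sigma \in S_n(132)$ with $\mmp^{(1,0,1,0)}(\sigma)=0$ by the position of $n$, writing $\sigma = A\,n\,B$ as in Figure~\ref{fig:basic2}. Every entry of $A$ has $n$ to its right and above it, so its quadrant I is nonempty; hence avoidance of $\MMP(1,0,1,0)$ forces every entry of $A$ to have an empty quadrant III, i.e.\ $A$ must be decreasing, so there is exactly one choice for $A$. Since all entries of $B$ lie strictly below everything in $A\cup\{n\}$, an entry of $B$ matches $\MMP(1,0,1,0)$ in $\sigma$ if and only if it matches $\MMP(1,0,1,0)$ within $B$ alone. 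Thus the admissible $\sigma$ of length $n$ are in bijection with the admissible choices of $B$ of lengths $0,1,\dots,n-1$ (one per position of $n$, with $A$ then forced), which recovers the recursion above and realizes $2^{n-1}$ as the total number of such configurations.

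There is essentially no obstacle here once Theorems \ref{thm:Qk0l0} and \ref{thm:Q00k0} are in hand; the statement is a routine power-series extraction (the closed form $\frac{1-t}{1-2t}$ has in fact already been recorded earlier in this subsection). The only point needing a little care is the boundary term: the constant $1$ coming from the empty permutation is exactly what makes the answer $2^{n-1}$ rather than $2^{n}$.
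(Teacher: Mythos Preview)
Your proof is correct and follows essentially the same approach as the paper: the paper first notes that the result is immediate from the closed form $Q_{132}^{(1,0,1,0)}(t,0)=\frac{1-t}{1-2t}$ (which was already recorded earlier in the subsection, exactly as you observe), and then supplies the same combinatorial induction via the position of $n$, with $A$ forced to be decreasing and $B$ an arbitrary $\MMP(1,0,1,0)$-avoider. Your write-up is a bit more explicit in deriving the generating function from Theorems~\ref{thm:Qk0l0} and~\ref{thm:Q00k0} and in spelling out the recursion, but the substance is the same.
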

\begin{proof}
This follows immediately from the fact that 
$Q_{132}^{(1,0,1,0)}(t,0) = \frac{1-t}{1-2t}$. We can 
also give a simple inductive proof of this fact. 
  
Clearly $Q_{1,132}^{(1,0,1,0)}(0)=1$.  Assume 
that  $Q_{k,132}^{(1,0,1,0)}(0)=2^{k-1}$ for $k < n$.  Then 
suppose that $\mmp^{(1,0,1,0)}(\sg) = 0$ and $\sg_i =n$.  Then 
it must be the case that the elements to the left of $\sg_i$ are 
decreasing so that $\sg_1 \ldots \sg_{i-1} = (n-1)(n-2) \ldots (n-(i-1))$. 
But then the elements to the right of $\sg_i$ must form a 132-avoiding 
permutation of $S_{n-1}$ which has no occurrence of the pattern $\MMP(1,0,1,0)$.
Thus, if $i =n$, we only have one such $\sg$ and if 
$i <n$, we have $2^{n-i-1}$ choices for $\sg_{i+1} \ldots \sg_n$ by induction.
It follows that 
$$Q_{n,132}^{(1,0,1,0)}(0) = 1+ \sum_{i=1}^{n-1} 2^{i-1} = 2^{n-1}.$$
\end{proof}


The sequence $(Q_{n,132}^{(1,0,1,0)}(x)|_{x})_{n \geq 3}$ is 
the sequence A000337 in the OEIS which has the formula 
$a(n) = (n-1)2^{n-1} +1$, and the following theorem confirms this fact. 

\begin{theorem} 
For $n \geq 3$,
\begin{equation}\label{Q1010x}
Q_{n,132}^{(1,0,1,0)}(x)|_x = (n-3)2^{n-2} +1. 
\end{equation}
\end{theorem}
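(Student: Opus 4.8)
The plan is to derive a recurrence for $Q_{n,132}^{(1,0,1,0)}(x)|_x$ by using the structural decomposition of $132$-avoiding permutations by the position of $n$, tracking exactly those permutations with precisely one match of $\MMP(1,0,1,0)$. Write $a_n = Q_{n,132}^{(1,0,1,0)}(x)|_x$ and $b_n = Q_{n,132}^{(1,0,1,0)}(0) = 2^{n-1}$ (the latter known from the preceding theorem). Fix $\sg \in S_n^{(i)}(132)$, so $\sg = A_i(\sg)\, n\, B_i(\sg)$ with $A_i(\sg)$ a $132$-avoider on $\{n-i+1,\dots,n-1\}$ sitting above $B_i(\sg)$, a $132$-avoider on $\{1,\dots,n-i\}$. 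As in the derivation of \eqref{k0l0rec}, an element of $B_i(\sg)$ matches $\MMP(1,0,1,0)$ in $\sg$ iff it matches $\MMP(1,0,1,0)$ in $B_i(\sg)$ (elements to the left of $n$ are all larger, hence contribute nothing to quadrants I or III of a point in $B$), while an element of $A_i(\sg)$ matches $\MMP(1,0,1,0)$ in $\sg$ iff it matches $\MMP(0,0,1,0)$ in $A_i(\sg)$ — the point $n$ supplies the one required point in quadrant I, so we only need one smaller point to the left within $A_i(\sg)$. The point $n$ itself never matches.

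The next step is to record that having exactly one match total means the single match occurs either in the $A$-part or the $B$-part. This gives, summing over $i$ from $1$ to $n$,
\[
a_n = \sum_{i=1}^n \Bigl( \bigl(Q_{i-1,132}^{(0,0,1,0)}(x)|_x\bigr)\, 2^{n-i} \;+\; 2^{\max(i-2,0)\text{-ish}}\, a_{n-i}\Bigr),
\]
where the first term uses that $B_i(\sg)$ must then avoid $\MMP(1,0,1,0)$ entirely, and the second uses that $A_i(\sg)$ must avoid $\MMP(0,0,1,0)$ entirely. So I will also need the two auxiliary quantities $Q_{m,132}^{(0,0,1,0)}(x)|_x$ and $Q_{m,132}^{(0,0,1,0)}(0)$; the latter is $1$ for all $m$ (a $132$-avoider with no point having a smaller point to its left is the decreasing permutation — wait, no: $\MMP(0,0,1,0)$-avoidance means \emph{no} $\sg_j$ has a smaller element to its left, forcing $\sg$ decreasing, so $Q_{m,132}^{(0,0,1,0)}(0)=1$), and the former, the linear coefficient, can be extracted from the explicit formula in Theorem~\ref{thm:Q00k0} with $k=1$, namely $Q_{132}^{(0,0,1,0)}(t,x)=C(tx)$... actually $Q_{132}^{(0,0,1,0)}(t,x)$ is given there, and one reads off $Q_{m,132}^{(0,0,1,0)}(x)|_x$ as a Catalan-type coefficient. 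Cleaning the index bookkeeping at the boundary cases ($i=1$, $i=n$, $i=n-1$) is the fussiest part but is routine.

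From the recurrence I would then solve for $a_n$ in closed form. The most efficient route is to pass to generating functions: multiply by $t^n$, sum, and use that $\sum 2^{n-1}t^n = t/(1-2t)$ and that the $B$-part convolution contributes a factor $1/(1-2t)$ (matching the known $Q_{132}^{(1,0,1,0)}(t,0)=(1-t)/(1-2t)$ structure). This yields a rational generating function for $\sum_n a_n t^n$, and extracting coefficients gives $(n-3)2^{n-2}+1$; equivalently one can simply verify by induction that $(n-3)2^{n-2}+1$ satisfies the recurrence with the correct initial values $a_3 = 1$, $a_4 = 5$, $a_5 = 17$ (read off from the displayed expansion of $Q_{132}^{(1,0,1,0)}(t,x)$ in the excerpt), which sidesteps the generating-function manipulation entirely.

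The main obstacle I anticipate is not any single hard idea but getting the decomposition of "exactly one match" exactly right — in particular being careful that the reduction of $\MMP(1,0,1,0)$ to $\MMP(0,0,1,0)$ on the $A$-part is valid (it is, because $132$-avoidance plus the dominating point $n$ means any point of $A$ with a smaller point to its left automatically has $n$ in its quadrant I), and correctly separating the case where the unique match lies in $A$ from the case where it lies in $B$, with no double counting and correct treatment of the $i=n$ and small-$i$ boundary terms. Once the recurrence is pinned down, verifying the formula is a short induction.
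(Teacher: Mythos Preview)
Your approach is essentially identical to the paper's: both decompose by the position of $n$, then split ``exactly one match'' according to whether the matching point lies in $A_i(\sg)$ (so $\red[A_i(\sg)]$ has exactly one $\MMP(0,0,1,0)$-match, contributing $\binom{i-1}{2}$, while $B_i(\sg)$ avoids $\MMP(1,0,1,0)$, contributing $2^{n-i-1}$) or in $B_i(\sg)$ (so $A_i(\sg)$ is decreasing and $B_i(\sg)$ has exactly one $\MMP(1,0,1,0)$-match, handled by induction), and then sum and simplify. Your displayed recurrence has bookkeeping slips --- the factor $2^{n-i}$ should be $b_{n-i}=2^{n-i-1}$ (with $b_0=1$ at $i=n$), and the placeholder coefficient on $a_{n-i}$ should simply be $Q_{i-1,132}^{(0,0,1,0)}(0)=1$ --- but you correctly identify both of these values later in your own proposal, so once the indices are cleaned up your argument is exactly the paper's.
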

\begin{proof}
To prove (\ref{Q1010x}), we classify 
the $\sg = \sg_1 \ldots \sg_n \in S_n(132)$ such 
that $\mmp^{(1,0,1,0)}(\sg) =1$ according to whether 
the $\sg_i$ which matches $\MMP(1,0,1,0)$ occurs to the 
left or right of position of $n$ in $\sg$. 

First, 
suppose that $\sg_i=n$ and the $\sg_s$ matching $\MMP(1,0,1,0)$ in $\sg$ 
is such that  $s < i$. It follows that $\red[\sg_1 \ldots \sg_{i-1}]$ is 
an element of $S_{i-1}(132)$ such that $\mmp^{(0,0,1,0)} =1$.  
We proved in \cite{kitremtie} that 
$Q^{(0,0,1,0)}_{n,132}(x)|_x =\binom{n}{2}$ so that we 
have $\binom{i-1}{2}$ choices for $\sg_1 \ldots \sg_{i-1}$.  
It must be the case that 
$\mmp^{(1,0,1,0)}(\sg_{i+1} \ldots \sg_n) =0$ so that 
we have $2^{n-i-1}$ choices for $\sg_{i+1} \ldots \sg_n$. It follows 
that there are 
$\binom{n-1}{2} + \sum_{i=3}^{n-1} \binom{i-1}{2}2^{n-i-1}$ permutations 
$\sg \in S_n(132)$ where the unique element which matches 
$\MMP(1,0,1,0)$ occurs to the left of the position of $n$ in $\sg$. 

Next suppose that  $\sg = \sg_1 \ldots \sg_n \in S_n(132)$, 
 $\mmp^{(1,0,1,0)}(\sg) =1$, $\sg_i=n$ and the $\sg_s$ matching $\MMP(1,0,1,0)$ 
is such that $s > i$. Then the elements to the left of $\sg_i$ in $\sg$ must 
be decreasing and the elements to the right of $\sg_i$ in $\sg$ 
must be such that $\mmp^{(1,0,1,0)}(\sg_{i+1} \ldots \sg_n) = 1$. 
Thus, we have $1+(n-i-3)2^{n-i-2}$ choices for $\sg_{i+1} \ldots \sg_n$ 
by induction. It follows 
that there are 
$$\sum_{i=1}^{n-3} (1+(n-i-3)2^{n-i-2}) = (n-3) + \sum_{j=1}^{n-4} j2^{j+1}$$ permutations 
$\sg \in S_n(132)$ where the unique element which matches 
$\MMP(1,0,1,0)$ occurs to the right  of the position of $n$ in $\sg$. 
Thus, 
\begin{eqnarray*}
Q^{(1,0,1,0)}_{n,132}(x)|_x &=& (n-3) +  \sum_{j=1}^{n-4} j2^{j+1} + 
\binom{n-1}{2}+ \sum_{i=3}^{n-1} \binom{i-1}{2}2^{n-i-1} \\
&=& (n-3)2^{n-2} +1.
\end{eqnarray*}
Here the last equality can easily be proved by induction or be verified 
by Mathematica.
\end{proof}

We also can find explicit formulas for the second highest coefficient 
in $Q_n^{(k,0,1,0)}(x)$ for $k \geq 1$.

\begin{theorem}
\begin{equation}\label{secondk010}
Q_{n,132}^{(k,0,1,0)}(x)|_{x^{n-2-k}}= 2k+\binom{n-k}{2}
\end{equation}
for all $n \geq k+3$. 
\end{theorem}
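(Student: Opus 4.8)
The plan is to combine the recursion from Theorem~\ref{thm:Qk0l0} with the structural classification of $132$-avoiding permutations by the position of $n$. Recall from Theorem~\ref{maxcoeff0k0l} (with $\ell=1$, $C_1=1$) that for $n \geq k+2$ the top coefficient of $Q_{n,132}^{(k,0,1,0)}(x)$ is $x^{n-k-1}$ with coefficient $1$, the unique maximizing permutation being $\sg = n'(\text{increasing tail})$ — more precisely the permutation whose first entry creates the single inversion needed for the $\ell=1$ condition while everything else is increasing. So I would first pin down this maximizing permutation explicitly, and then argue that a permutation $\sg \in S_n(132)$ contributes to the second-highest coefficient $x^{n-2-k}$ exactly when it is "one step away" from maximal, i.e. $\mmp^{(k,0,1,0)}(\sg) = n-2-k$.

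The main step is a direct count via the position of $n$. Write $\sg_i = n$, so $\sg = A\, n\, B$ with $A = A_i(\sg) \in S_{i-1}(132)$ sitting above $B = B_i(\sg) \in S_{n-i}(132)$. Since $k \geq 1$, the letter $n$ never matches $\MMP(k,0,1,0)$, and as explained in the text the matches split as: matches inside $A$ count occurrences of $\MMP(k-1,0,1,0)$ in $\red[A]$ (the letter $n$ supplies one point in quadrant~I for free), while matches inside $B$ count occurrences of $\MMP(k,0,1,0)$ in $\red[B]$ (neither $n$ nor $A$ affects them). Hence $\mmp^{(k,0,1,0)}(\sg) = \mmp^{(k-1,0,1,0)}(\red[A]) + \mmp^{(k,0,1,0)}(\red[B])$, and I need the number of pairs $(A,B)$ with $|A| = i-1$, $|B| = n-i$, achieving total $n-2-k$. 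Using the top-coefficient facts (for the $A$-factor the relevant bound is $|A| - k$, attained uniquely; for the $B$-factor the bound is $|B|-1-k$, attained uniquely, with second coefficient under induction), the count becomes: either $A$ is maximal for $\MMP(k-1,0,1,0)$ and $B$ is one-below-maximal for $\MMP(k,0,1,0)$ (contributing $Q_{n-i,132}^{(k,0,1,0)}(x)|_{x^{n-i-2-k}}$, known by induction), or $A$ is one-below-maximal and $B$ is maximal. I would sum these contributions over all valid $i$, carefully handling the small-$i$ boundary cases where $A$ or $B$ is too short to have the required number of matches (these give the constant-type corrections), and also the $132$-avoiding permutations with no $n$-split issue, i.e. $i=n$.

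The arithmetic obstacle — and the part I expect to be fiddly rather than deep — is evaluating the resulting double sum $\sum_i \left( Q_{i-1,132}^{(k-1,0,1,0)}(x)|_{\mathrm{top}-1} + Q_{n-i,132}^{(k,0,1,0)}(x)|_{\mathrm{top}-1} \right)$ in closed form and matching it to $2k + \binom{n-k}{2}$. I would set this up as an induction on $k$ with base case $k=1$ (already established: $Q_{n,132}^{(1,0,1,0)}(x)|_{x} = (n-3)2^{n-2}+1$ is not of this shape, so in fact the base of the $k$-induction should be handled directly at $k=1$ via the formula $2\cdot 1 + \binom{n-1}{2}$ for the coefficient of $x^{n-3}$, which is the \emph{second} coefficient, consistent with the $\binom{n}{2}$-type count from $Q^{(0,0,1,0)}$), and also needs the companion fact $Q_{n,132}^{(0,0,1,0)}(x)|_{x^{n-2}} = \binom{n}{2} = \binom{n-0}{2}$ cited from \cite{kitremtie} to seed the $A$-factor when $k=1$. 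The key identities are then $\binom{n-k}{2} = \binom{n-1-k}{2} + (n-1-k)$ together with a telescoping of the $2k$ term, so the induction should close cleanly once the boundary terms are bookkept correctly; I would verify the final collapse by Mathematica as a sanity check, exactly as done for the preceding theorem.
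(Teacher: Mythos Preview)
Your proposal has a genuine gap in the degree-counting step. You set up the recursion correctly: with $\sg_i = n$, the contribution to $Q_{n,132}^{(k,0,1,0)}(x)$ from $S_n^{(i)}(132)$ is $Q_{i-1,132}^{(k-1,0,1,0)}(x)\,Q_{n-i,132}^{(k,0,1,0)}(x)$. But your plan to sum over all $i$ the cases ``$A$ maximal and $B$ one-below-maximal, or vice versa'' does not reach the target exponent. When both factors are in their generic range (so $i-1 \geq k+1$ and $n-i \geq k+2$), the top degrees are $i-1-k$ and $n-i-1-k$, summing to $n-2-2k$; pairing top with one-below-top gives $n-3-2k$. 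Since $k \geq 1$, both are strictly less than $n-2-k$. So for $2 \leq i \leq n-2$ the product contributes nothing at all to $x^{n-2-k}$, and your proposed sum $\sum_i(\text{top}-1 \text{ terms})$ is the wrong object: there is no main sum to telescope, and the ``boundary corrections'' you mention in passing are in fact the entire answer.

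The paper's proof exploits exactly this: only $i \in \{1, n-1, n\}$ survive. For $i=1$ one gets $Q_{n-1,132}^{(k,0,1,0)}(x)|_{x^{n-2-k}}$, which is the top coefficient for size $n-1$, equal to $1$. For $i=n-1$ one gets $Q_{n-2,132}^{(k-1,0,1,0)}(x)|_{x^{n-2-k}}$, the top coefficient for size $n-2$ with parameter $k-1$, again $1$. For $i=n$ one gets $Q_{n-1,132}^{(k-1,0,1,0)}(x)|_{x^{n-2-k}}$, the second-highest coefficient for size $n-1$ with parameter $k-1$, which by induction is $2(k-1)+\binom{n-k}{2}$. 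Adding gives $2k+\binom{n-k}{2}$ with no further arithmetic. The base case $k=1$ is handled by a direct position-of-$n$ argument using the known fact $Q_{n,132}^{(0,0,1,0)}(x)|_{x^{n-2}} = \binom{n}{2}$ from \cite{kitremtie}, which you did identify correctly. So the fix is to replace your ``sum over all valid $i$'' by the observation that only three values of $i$ contribute; once you see this, no telescoping or Mathematica check is needed.
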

\begin{proof}
We proceed by induction on $k$. 

First we shall prove that $Q_{n,132}^{(1,0,1,0)}(x)|_{x^{n-3}}= 2+\binom{n-1}{2}$ for 
$n \geq 4$. That is, suppose that 
$\sg = \sg_1 \ldots \sg_n \in S_n(132)$ and 
$\mmp^{(1,0,1,0)}(\sg) =n-3$.  If $\sg_1 =n$, then 
$\sg_2 \ldots \sg_n$ must be strictly increasing. Similarly, 
if $\sg_{n-1} =n$ so that $\sg_n =1$, then 
$\sg_1 \ldots \sg_{n-1}$ must be strictly increasing. 
It cannot be that $\sg_i = n$ where $1 < i < n-1$ because 
in that case the most $\MMP(1,0,1,0)$-matches that we can 
have in $\sg$ occurs when $\sg_1 \ldots \sg_i$ is 
an increasing sequence and $\sg_{i+1} \ldots \sg_n$ is 
an increasing sequence which would give us a total of $i-2 +n-i -2 = n-4$ 
matches of $\MMP(1,0,1,0)$.  Thus, the only other possibility 
is if $\sg_n =n$ in which case $\mmp^{(0,0,1,0)}(\sg_1 \ldots \sg_{n-1}) = 
n-3$.  We proved in  \cite{kitremtie} that 
$Q_{n,132}^{(0,0,1,0)}(x)|_{x^{n-2}} = \binom{n}{2}$. Thus, if 
$\sg_n =n$ we have that $\binom{n-1}{2}$ choices for 
$\sg_1 \ldots \sg_{n-1}$. It follows that 
$Q_{n,132}^{(1,0,1,0)}(x)|_{x^{n-3}} = 2 + \binom{n-1}{2}$ for $n \geq 4$.

Assume that $k \geq 2$ we have established (\ref{secondk010}) for $k-1$. 
We know that the highest power of $x$ that occurs in  $Q_{n,132}^{(k,0,1,0)}(x)$ is 
$x^{n-1-k}$ which occurs with a coefficient of 1 for $n \geq k+2$.  
Now 
$$Q_{n,132}^{(k,0,1,0)}(x)|_{x^{n-2-k}}= \sum_{i=1}^n(Q_{i-1,132}^{(k-1,0,1,0)}(x)Q_{n-i,132}^{(k,0,1,0)}(x))|_{x^{n-2-k}}.$$
Since the highest power of $x$ that occurs in 
$Q_{i-1,132}^{(k-1,0,1,0)}(x)$ is 
$x^{i-1 - 1 -k}$ and the highest power of $x$ that occurs in $Q_{n-i,132}^{(k-1,0,1,0)}(x)$ is 
$x^{n-i - 1 -k}$, $(Q_{i-1,132}^{(k-1,0,1,0)}(x)Q_{n-i,132}^{(k,0,1,0)}(x))|_{x^{n-2-k}}=0$ 
unless $i\in \{1,n-1,n\}$. Thus, we have 3 cases. \\
\ \\
{\bf Case 1.} $i=1$. In that case,
$$(Q_{i-1,132}^{(k-1,0,1,0)}(x)Q_{n-i,132}^{(k,0,1,0)}(x))|_{x^{n-2-k}}= 
Q_{n-1,132}^{(k,0,1,0)}(x)|_{x^{n-2-k}} =1.$$
{\bf Case 2.} $i=n-1$. In this case, we are considering permutations of the form 
$\sg = \sg_1 \ldots \sg_{n-2} n 1$. Then we must have 
$\mmp^{(k-1,0,1,0)}(\red[\sg_1 \ldots \sg_{n-2}]) = n-k-2= (n-2)-1-(k-1)$ so that 
there is only one choice for $\sg_1 \ldots \sg_{n-2}$. Thus, in this case,  
$$(Q_{i-1,132}^{(k-1,0,1,0)}(x)Q_{n-i,132}^{(k,0,1,0)}(x))|_{x^{n-2-k}}= 
Q_{n-2,132}^{(k-1,0,1,0)}(x))|_{x^{n-2-k}} =1.$$
{\bf Case 3.} $i=n$. In this case, 
\begin{eqnarray*}
(Q_{i-1,132}^{(k-1,0,1,0)}(x)Q_{n-i,132}^{(k,0,1,0)}(x))|_{x^{n-2-k}} &=& 
Q_{n-1,132}^{(k-1,0,1,0)}(x))|_{x^{n-2-k}} \\
&=& 2(k-1) + \binom{n-1-(k-1)}{2}\\
&=& 
2(k-1) + \binom{n-k}{2}
\end{eqnarray*}
for $n-1 \geq k-1 +3$.\\
\ \\
Thus, it follows that 
$Q_{n,132}^{(k,0,1,0)}(x)|_{x^{n-2-k}}= 2k+\binom{n-k}{2}$ for $n \geq k+3$.
\end{proof}

Similarly,  we have computed the following. 

\begin{align*}
&Q_{132}^{(1,0,2,0)}(t,x)  =1+t+2 t^2+5 t^3+(12+2 x) t^4+ \left(29+11 x+2 x^2\right) t^5+\ \ \ \ \ \ \ \ \ \\
&\left(70+45 x+15 x^2+2 x^3\right) t^6+ \left(169+158 x+81 x^2+19 x^3+2 x^4\right) t^7+\\
&\left(408+509 x+359 x^2+129 x^3+23 x^4+2 x^5\right) t^8+\\
&\left(985+1550 x+1409 x^2+700 x^3+189 x^4+27 x^5+2 x^6\right) t^9+\cdots.
\end{align*}


\begin{eqnarray*}
&&Q_{132}^{(2,0,2,0)}(t,x)  =1+t+ 2 t^2+5 t^3+14 t^4+ 
(40+2x)t^5+ \left(115+15 x+2 x^2\right)t^6+\\
&& \left(331+77 x+19 x^2+2 x^3\right)t^7+
\left(953+331 x+121 x^2+23 x^3+2 x^4\right)t^8+\\
&& \left(2744+1288 x+624 x^2+177 x^3+27 x^4+2 x^5\right)t^9+ \cdots.
\end{eqnarray*}

In this case, the sequence $(Q_{n,132}^{(2,0,2,0)}(0))_{n \geq 1}$ 
is A052963 in the OEIS  which satisfies 
the recursion $a(n) = 3a(n-1)-a(n-3)$ with $a(0)=1$, $a(1) =2$ and $a(2) =5$, and has the generating function 
$\frac{1-t-t^2}{1-3t+t^3}$.

\begin{align*}
&Q_{132}^{(3,0,2,0)}(t,x)  =1+t+2 t^2+5 t^3+14 t^4+42 t^5+ (130+2x) t^6+
\left(408+19 x+2 x^2\right)t^7+ \\
&\left(1288+117 x+23 x^2+2 x^3\right)t^8+
\left(4076+588 x+169 x^2+27 x^3+2 x^4\right)t^9 + \cdots.
\end{align*}

We have also computed the following. 

\begin{eqnarray*}
&&Q_{132}^{(1,0,3,0)}(t,x)  =1+t+2 t^2+5 t^3+14 t^4+
(37+5 x)t^5+ \left(98+29 x+5 x^2\right)t^6+\\
&& \left(261+124 x+39 x^2+5 x^3\right)t^7+
\left(694+475 x+207 x^2+49 x^3+5 x^4\right)t^8+\\
&& \left(1845+1680 x+963 x^2+310 x^3+59 x^4+5 x^5\right)t^9 + \\
&& \left(4906+5635 x+4056 x^2+1692 x^3+433 x^4+69 x^5+5 x^6\right) t^{10} +\cdots.
\end{eqnarray*}

\begin{eqnarray*}
&&Q_{132}^{(2,0,3,0)}(t,x)= 1+t+2 t^2+5 t^3+14 t^4+42 t^5+(127+5 x)t^6 + 
\ \ \ \ \ \ \ \ \ \ \ \ \ \ \ \ \ \ \\
&&\left(385+39 x+5 x^2\right)t^7+\left(1169+207 x+49 x^2+5 x^3\right)t^8+ \\
&&\left(3550+938 x+310 x^2+59 x^3+5 x^4\right)t^9 + \\
&&  \left(10781+3866 x+1642 x^2+433 x^3+69 x^4+5 x^5\right) t^{10} + \cdots.
\end{eqnarray*}


\begin{eqnarray*}
&&Q_{132}^{(3,0,3,0)}(t,x)= 1+t+2 t^2+5 t^3+14 t^4+
42 t^5+132 t^6+ (424+5 x)t^7+\ \ \ \ \ \ \ \ \\
&& \left(1376+49 x+5 x^2\right)t^8+ \left(4488+310 x+59 x^2+5 x^3\right)t^9 
+ \\
&& \left(14672+1617
x+433 x^2+69 x^3+5 x^4\right)  t^{10} + \cdots.
\end{eqnarray*}


We can also find  a formula for the second highest coefficient 
in $Q_{n,132}^{(k,0,m,0)}(x)$ for $m \geq 2$. 

\begin{theorem}
For all $k \geq 1$,  $m \geq 2$ and $n \geq m+k+2$, 
$$Q_{n,132}^{(k,0,m,0)}(x)|_{x^{n-m-2}}=C_{m+1}+(2k+1)C_m+2C_m(n-k-m-2).$$
\end{theorem}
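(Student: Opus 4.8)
The plan is to induct on $k$, using the recursion \eref{k0l0gf} (equivalently the convolution \eref{k0l0rec}) in exactly the same way as in the proof of Theorem~\ref{secondk010}, but with the role of the ``top coefficient'' now played by $C_m$ rather than $1$. The base case $k=1$ should be handled directly: if $\mmp^{(1,0,m,0)}(\sg)=n-m-2$ for $\sg\in S_n(132)$ and $\sg_i=n$, then, arguing as in the proof of Theorem~\ref{maxcoeff0k0l} (the maximum $n-m-1$ is attained only when $\sg_1\ldots\sg_m\in S_m(132)$ and $\sg_{m+1}\ldots\sg_n$ is increasing), one checks that $i$ can only be $1$, $n$, or among a small range of values near the front, and one tabulates the contribution in each case using the known fact $Q_{n,132}^{(0,0,m,0)}(x)|_{x^{n-m-1}}=C_m$ and $Q_{n,132}^{(0,0,m,0)}(x)|_{x^{n-m-2}}=C_{m+1}+(n-m-2)C_m$ (this second-highest coefficient for the $(0,0,m,0)$ pattern was computed in \cite{kitremtie} and should be invoked). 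Summing these contributions gives $C_{m+1}+3C_m+2C_m(n-m-3)$, which matches the claimed formula at $k=1$.

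For the inductive step, assume the formula holds for $k-1$ with $k\geq 2$. Write
\begin{equation*}
Q_{n,132}^{(k,0,m,0)}(x)|_{x^{n-m-2}}=\sum_{i=1}^n \left(Q_{i-1,132}^{(k-1,0,m,0)}(x)\,Q_{n-i,132}^{(k,0,m,0)}(x)\right)\Big|_{x^{n-m-2}}.
\end{equation*}
By Theorem~\ref{maxcoeff0k0l}, the highest power of $x$ in $Q_{i-1,132}^{(k-1,0,m,0)}(x)$ is $x^{i-1-(k-1)-m}=x^{i-k-m}$ with coefficient $C_m$, and the highest power in $Q_{n-i,132}^{(k,0,m,0)}(x)$ is $x^{n-i-k-m}$ with coefficient $C_m$. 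Hence the sum of the two exponents is at most $n-2k-2m$, which is strictly less than $n-m-2$ as soon as $k+m>0$; so the product of the two \emph{top} terms never reaches degree $n-m-2$, and a term of degree $n-m-2$ can only arise when one factor contributes its top coefficient and the other contributes its \emph{second-highest} coefficient (or one factor is the empty permutation, i.e.\ $i=1$ or $i=n$). The degree bookkeeping forces $i\in\{1,n\}$ for the empty-factor cases, plus one intermediate value of $i$ for the top$\times$second-highest pairing in each of the two factors. This yields exactly three cases, as in the proof of Theorem~\ref{secondk010}:
\begin{itemize}
\item $i=1$: contributes $Q_{n-1,132}^{(k,0,m,0)}(x)|_{x^{n-m-2}}=Q_{n-1,132}^{(k,0,m,0)}(x)|_{x^{(n-1)-m-1}}=C_m$;
\item $i=n$: contributes $Q_{n-1,132}^{(k-1,0,m,0)}(x)|_{x^{n-m-2}}=Q_{n-1,132}^{(k-1,0,m,0)}(x)|_{x^{(n-1)-m-2}}$, which by induction equals $C_{m+1}+(2(k-1)+1)C_m+2C_m(n-1-(k-1)-m-2)=C_{m+1}+(2k-1)C_m+2C_m(n-k-m-2)$;
\item one intermediate $i$ where the $(k-1,0,m,0)$ factor is at top degree and the $(k,0,m,0)$ factor is at second-highest degree (or vice versa): this contributes the top coefficient $C_m$ times $1$, accounting for the remaining $2C_m$, i.e.\ two copies of $C_m$.
\end{itemize}
Adding these gives $C_m + (C_{m+1}+(2k-1)C_m+2C_m(n-k-m-2)) + 2C_m = C_{m+1}+(2k+1)C_m+2C_m(n-k-m-2)$, completing the induction. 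The main obstacle, and the step I would be most careful about, is the combinatorial bookkeeping in the base case $k=1$ and the precise identification of which intermediate values of $i$ contribute in the inductive step: one must verify that the "second-highest degree" of each factor is attained exactly at the index predicted by Theorem~\ref{maxcoeff0k0l}'s degree formula, so that no additional values of $i$ slip in, and that the coefficient contributed there is genuinely $C_m\cdot 1$ (the product of the two top coefficients $C_m$ and $1$, using that the factor truncated at second-highest degree in fact has the next coefficient equal to $1$ only in degenerate small cases — here it is the \emph{top} coefficient $C_m$ of one factor paired against the \emph{second-from-top} of the other, whose value near that degree is what the recursion tracks). A short induction or a Mathematica check, as the authors do elsewhere, can be used to confirm the final arithmetic.
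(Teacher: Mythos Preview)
Your overall strategy matches the paper's: induct on $k$ via the convolution recursion $Q_{n,132}^{(k,0,m,0)}(x)=\sum_{i=1}^n Q_{i-1,132}^{(k-1,0,m,0)}(x)\,Q_{n-i,132}^{(k,0,m,0)}(x)$ and isolate the few values of $i$ that can contribute to the second-highest coefficient. However, the execution has a genuine gap.

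The exponent in the theorem statement is a typo: the second-highest power is $x^{n-m-k-1}$, not $x^{n-m-2}$ (these coincide only for $k=1$, which is why your base case works out). The paper's proof actually computes $Q_{n,132}^{(k,0,m,0)}(x)|_{x^{n-m-k-1}}$, and you can check this against the displayed series for $k=2,3$. You have taken the typo literally, and this breaks your inductive step. For instance, in your case $i=1$ you assert $Q_{n-1,132}^{(k,0,m,0)}(x)|_{x^{n-m-2}}=C_m$, but by Theorem~\ref{maxcoeff0k0l} the top degree of $Q_{n-1,132}^{(k,0,m,0)}(x)$ is $n-1-k-m$, which is strictly below $n-m-2$ for all $k\geq 2$; so this coefficient is $0$, not $C_m$. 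Your degree argument ``top${}+{}$top $=n-2k-2m<n-m-2$'' is then self-defeating: if the maximum product degree is below $n-m-2$, there is nothing at all to extract for $i\geq 2$, and your ``intermediate $i$'' case cannot exist as described.

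Two further issues. First, your bound on the product degree only applies when both factors are in the generic range; you must also handle small $i$ (where $Q_{i-1,132}^{(k-1,0,m,0)}(x)$ is a constant) and $i$ near $n$ separately. Doing this carefully one finds that the contributing indices are exactly $i=1$, $i=n-1$, and $i=n$; the paper identifies $i=n-1$ explicitly, where $Q_{n-2,132}^{(k-1,0,m,0)}(x)$ contributes its top coefficient $C_m$. Your third bullet leaves this index unspecified and the phrase ``$C_m$ times $1$, accounting for the remaining $2C_m$'' is not a valid computation. Second, the formula you cite from \cite{kitremtie} for the base case is misquoted: the correct value is $Q_{N,132}^{(0,0,m,0)}(x)|_{x^{N-m-1}}=C_{m+1}-C_m+2C_m(N-1-m)$, not $C_{m+1}+(N-m-2)C_m$.

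Once you replace $n-m-2$ by $n-m-k-1$ throughout and pin down the three cases $i\in\{1,n-1,n\}$ precisely, your argument becomes the paper's proof verbatim.
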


\begin{proof}

First we establish the base case which is when 
$k =1$ and $m \geq 2$. 
In this case, 
$$Q_{n,132}^{(1,0,m,0)}(x) = \sum_{i=1}^n 
Q_{i-1,132}^{(0,0,m,0)}(x) Q_{n-i,132}^{(1,0,m,0)}(x).$$

Since the highest power of $x$ that can appear in 
$Q_{n,132}^{(0,0,m,0)}(x)$ is $x^{n-m}$ for $n >m$ and 
the highest power of $x$ that can appear in 
$Q_{n,132}^{(1,0,m,0)}(x)$ is $x^{n-m-1}$ for $n >m+1$, it 
follows that the highest power of $x$ that appears in 
$Q_{i-1,132}^{(0,0,m,0)}(x) Q_{n-i,132}^{(1,0,m,0)}(x)$ 
will be less than $x^{n-m-2}$ for $i =2, \ldots, n-1$. Thus, 
we have three cases to consider. \\
\ \\
{\bf Case 1.} $i=1$. In this case, $Q_{i-1,132}^{(0,0,m,0)}(x) Q_{n-i,132}^{(1,0,m,0)}(x) 
=  Q_{n-1,132}^{(1,0,m,0)}(x)$ and we know that 
$$Q_{n-1,132}^{(1,0,m,0)}(x)|_{x^{n-m-2}} = C_m \ \mbox{for } n \geq m+2.$$
\ \\
{\bf Case 2.} $i=n-1$. In this case, $Q_{i-1,132}^{(0,0,m,0)}(x) Q_{n-i,132}^{(1,0,m,0)}(x) 
=  Q_{n-2,132}^{(0,0,m,0)}(x)$ and it was proved in 
\cite{kitremtie} that  
$$Q_{n-2,132}^{(0,0,m,0)}(x)|_{x^{n-m-2}} = C_m \ \mbox{for } n \geq m+2.$$
\ \\
{\bf Case 3.} $i=n$. In this case, $Q_{i-1,132}^{(0,0,m,0)}(x) Q_{n-i,132}^{(1,0,m,0)}(x) 
=  Q_{n-1,132}^{(0,0,m,0)}(x)$ and it was proved in 
\cite{kitremtie} that  
$$Q_{n-1,132}^{(0,0,m,0)}(x)|_{x^{n-m-2}} = C_{m+1}-C_m + 
2C_m(n-2-m) \ \mbox{for } n \geq m+3.$$

Thus, it follows that 
\begin{eqnarray*}
Q_{n,132}^{(1,0,m,0)}(x)|_{x^{n-m-2}} &=& C_{m+1}+C_m + 
2C_m(n-2-m) \\
&=& C_{m+1}+3C_m + 2C_m(n-3-m)\ \mbox{for } n \geq m+3.
\end{eqnarray*}

For example, for $m=2$, we get that 
$$Q_{n,132}^{(1,0,2,0)}(x)|_{x^{n-4}} = 11+4(n-5) \ \mbox{for } n \geq 5$$
and, for $m=3$, we get that 
$$Q_{n,132}^{(1,0,2,0)}(x)|_{x^{n-4}} = 29+10(n-6) \ \mbox{for } n \geq 6$$
which agrees with the series that we computed.

Now assume that $k > 1$ and we have proved the theorem 
for $k-1$ and all $m \geq 2$.  Then  
$$Q_{n,132}^{(k,0,m,0)}(x) = \sum_{i=1}^n 
Q_{i-1,132}^{(k-1,0,m,0)}(x) Q_{n-i,132}^{(k,0,m,0)}(x).$$

Since the highest power of $x$ that can appear in 
$Q_{n,132}^{(k-1,0,m,0)}(x)$ is $x^{n-m-(k-1)}$ for $n \geq m+k$ and 
the highest power of $x$ that can appear in 
$Q_{n,132}^{(k,0,m,0)}(x)$ is $x^{n-m-k}$ for $n >m+k$, it 
follows that the highest power of $x$ that appears in 
$Q_{i-1,132}^{(k-1,0,m,0)}(x) Q_{n-i,132}^{(k,0,m,0)}(x)$ 
will be less than $x^{n-m-k-1}$ for $i =2, \ldots, n-1$. Thus, 
we have three cases to consider. \\
\ \\
{\bf Case 1.} $i=1$. In this case, $Q_{i-1,132}^{(k-1,0,m,0)}(x) Q_{n-i,132}^{(k,0,m,0)}(x) 
=  Q_{n-1,132}^{(k,0,m,0)}(x)$ and we know that 
$$Q_{n-1,132}^{(k,0,m,0)}(x)|_{x^{n-m-k-1}} = C_m \ \mbox{for } n \geq m+k+2.$$
\ \\
{\bf Case 2.} $i=n-1$. In this case, $Q_{i-1,132}^{(k-1,0,m,0)}(x) Q_{n-i,132}^{(k,0,m,0)}(x) 
=  Q_{n-2,132}^{(k-1,0,m,0)}(x)$ and we know that  
$$Q_{n-2,132}^{(k-1,0,m,0)}(x)|_{x^{n-m-k-1}} = C_m \ \mbox{for } n \geq m+k+2.$$
\ \\
{\bf Case 3.} $i=n$. In this case, $Q_{i-1,132}^{(k-1,0,m,0)}(x) Q_{n-i,132}^{(k,0,m,0)}(x) 
=  Q_{n-1,132}^{(k-1,0,m,0)}(x)$ and we know by induction that  
$$Q_{n-1,132}^{(k-1,0,m,0)}(x)|_{x^{n-m-k-1}} = C_{m+1}+(2(k-1)+1)C_m + 
2C_m(n-m-(k-1)-1) \ \mbox{for } n \geq m+k+2.$$

Thus, it follows that 
$$
Q_{n,132}^{(k,0,m,0)}(x)|_{x^{n-m-k-1}} 
= C_{m+1}+(2k+1)C_m + 2C_m(n-m-k-2)\ \mbox{for } n \geq m+k+2.$$
\end{proof}

\section{$Q_{n,132}^{(k,0,0,\ell)}(x)=Q_{n,132}^{(k,\ell,0,0)}(x)$
 where $k,\ell \geq 1$}

By Lemma \ref{sym}, we know that 
$Q_{n,132}^{(k,0,0,\ell)}(x)= Q_{n,132}^{(k,\ell,0,0)}(x)$. 
Thus, we will only consider $Q_{n,132}^{(k,0,0,\ell)}(x)$ in 
this section.

Suppose that $n \geq \ell +1$.  
It is clear that $n$ can never match 
the pattern $\MMP(k,0,0,\ell)$ for $k \geq 1$ in any 
$\sg \in S_n(132)$.    
For $i \leq n-\ell$,  it is easy to see that as we sum 
over all the permutations $\sg$ in $S_n^{(i)}(132)$, our choices 
for the structure for $A_i(\sg)$ will contribute a factor 
of $Q_{i-1,132}^{(k-1,0,0,0)}(x)$ to $Q_{n,132}^{(k,0,0,\ell)}(x)$. 
That is,  since 
all the elements $A_i(\sg)$ have the elements in $B_i(\sg)$ in their 
fourth quadrant and $B_i(\sg)$ consists of at least $\ell$ elements so 
that the presence of $n$ ensures 
that an element in $A_i(\sg)$ matches $\MMP(k,0,0,\ell)$ in $\sg$ if 
and only if it matches $\MMP(k-1,0,0,0)$ in $A_i(\sg)$.  
Similarly, our choices 
for the structure for $B_i(\sg)$ will contribute a factor 
of $Q_{n-i,132}^{(k,0,0,\ell)}(x)$ to $Q_{n,132}^{(k,0,0,\ell)}(x)$ since 
neither $n$ nor any of the elements to the left of $n$ have 
any effect on whether an element in  $B_i(\sg)$ matches 
$\MMP(k,0,0,\ell)$. 

Now suppose $i > n-\ell$ and $j =n-i$. In this case, $B_i(\sg)$ consists 
of $j$ elements.  In this situation, an element 
of $A_i(\sg)$ matches $\MMP(k,0,0,\ell)$ in $\sg$ if and only if it matches $\MMP(k-1,0,0,\ell -j)$ in 
$A_i(\sg)$.  Thus, 
our choices for $A_i(\sg)$ contribute a factor of 
$Q^{(k-1,0,0,\ell-j)}_{i-1,132}(x) = Q^{(k-1,0,0,\ell -j)}_{n-j-1,132}(x)$ 
to $Q_{n,132}^{(k,0,0,\ell)}(x)$. Similarly, our choices 
for the structure for $B_i(\sg)$ will contribute a factor 
of $Q_{n-i,132}^{(k,0,0,\ell)}(x)$ to $Q_{n,132}^{(k,0,0,\ell)}(x)$ since 
neither $n$ nor any of the elements to the left of $n$ have 
any effect on whether an element in  $B_i(\sg)$ matches 
the  pattern $\MMP(k,0,0,\ell)$. Note that 
since 
$j < \ell$, we know that $Q_{n-i,132}^{(k,0,0,\ell)}(x) =C_j$.

It follows that for $n \geq \ell +1$, 
\begin{eqnarray}\label{Q-k00l}
Q_{n,132}^{(k,0,0,\ell)}(x) &=& \sum_{i=1}^{n-\ell} 
Q_{i-1,132}^{(k-1,0,0,0)}(x)Q_{n-i,132}^{(k,0,0,\ell)}(x) + \nonumber \\
&& \sum_{j=0}^{\ell -1} C_j Q_{n-j-1,132}^{(k-1,0,0,\ell-j)}(x).
\end{eqnarray}
Multiplying both sides of (\ref{Q-k00l}) by $t^n$, summing for $n \geq \ell +1$ and observing 
that $Q_{j,132}^{(k,0,0,\ell)}(x) = C_j$ for $j \leq \ell$, we 
see that for $k, \ell \geq 1$, 
\begin{eqnarray*}\label{Qk00l} 
Q_{132}^{(k,0,0,\ell)}(t,x)- \sum_{j=0}^\ell C_jt^j &=& 
t Q_{132}^{(k-1,0,0,0)}(t,x)\left(Q_{132}^{(k,0,0,\ell)}(t,x) -  \sum_{j=0}^{\ell-1} C_jt^j\right)+ 
\nonumber \\
&& t \sum_{j=0}^{\ell -1} C_j t^j \left(Q_{132}^{(k-1,0,0,0)}(t,x) - 
\sum_{s=0}^{\ell-j-1} C_st^s\right).
\end{eqnarray*}
Thus, we have the following theorem. 
\begin{theorem} \label{thm:k00l}
For all $k, \ell \geq 1$, 
\begin{multline}\label{Qk00lgf-}
Q_{132}^{(k,0,0,\ell)}(t,x) = \\
\frac{C_\ell t^\ell + \sum_{j=0}^{\ell -1} C_j t^j (1 -tQ_{132}^{(k-1,0,0,0)}(t,x)
+t(Q_{132}^{(k-1,0,0,\ell-j)}(t,x)-\sum_{s=0}^{\ell -j -1}C_s t^s))}{1-tQ_{132}^{(k-1,0,0,0)}(t,x)}.
\end{multline}
\end{theorem}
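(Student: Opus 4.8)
The plan is to turn the combinatorial recurrence (\ref{Q-k00l}) — already justified above by classifying $\sg\in S_n(132)$ according to the position $i$ of its largest entry $n$ and decomposing $G(\sg)$ into $A_i(\sg)$, the entries left of $n$, and $B_i(\sg)$, the entries right of $n$, with every entry of $A_i(\sg)$ exceeding every entry of $B_i(\sg)$ — into the claimed closed form by a generating-function computation followed by a linear solve. First I would record the initial data: for $j\le\ell$ no entry of a permutation in $S_j(132)$ can have $\ell$ points in its fourth quadrant, so $\mmp^{(k,0,0,\ell)}(\sg)=0$ and hence $Q_{j,132}^{(k,0,0,\ell)}(x)=C_j$ for $0\le j\le\ell$.

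Next I multiply (\ref{Q-k00l}) by $t^n$ and sum over $n\ge\ell+1$. On the left this yields $Q_{132}^{(k,0,0,\ell)}(t,x)-\sum_{j=0}^{\ell}C_jt^j$, using the initial data. For the first sum on the right I set $p=i-1$ and $q=n-i$, so $p+q=n-1$ with $p\ge 0$ and $q\ge\ell$; summing over all such $p,q$ gives $t\,Q_{132}^{(k-1,0,0,0)}(t,x)\bigl(Q_{132}^{(k,0,0,\ell)}(t,x)-\sum_{j=0}^{\ell-1}C_jt^j\bigr)$, the constraint $q\ge\ell$ being exactly what subtracts off the low-order Catalan terms. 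For the second sum I shift the index by setting $m=n-j-1$ (so $m\ge\ell-j$) and use $Q_{m,132}^{(k-1,0,0,\ell-j)}(x)=C_m$ for $m\le\ell-j-1$ to complete each inner series, obtaining $t\sum_{j=0}^{\ell-1}C_jt^j\bigl(Q_{132}^{(k-1,0,0,\ell-j)}(t,x)-\sum_{s=0}^{\ell-j-1}C_st^s\bigr)$. Equating, I get the linear equation
\begin{multline*}
Q_{132}^{(k,0,0,\ell)}(t,x)-\sum_{j=0}^{\ell}C_jt^j = t\,Q_{132}^{(k-1,0,0,0)}(t,x)\Bigl(Q_{132}^{(k,0,0,\ell)}(t,x)-\sum_{j=0}^{\ell-1}C_jt^j\Bigr)\\
+t\sum_{j=0}^{\ell-1}C_jt^j\Bigl(Q_{132}^{(k-1,0,0,\ell-j)}(t,x)-\sum_{s=0}^{\ell-j-1}C_st^s\Bigr),
\end{multline*}
which is the display set up immediately before the theorem.

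To finish, I collect the two occurrences of $Q_{132}^{(k,0,0,\ell)}$ on the left, factor out $1-tQ_{132}^{(k-1,0,0,0)}(t,x)$, and divide; this division is legitimate since $tQ_{132}^{(k-1,0,0,0)}(t,x)$ has zero constant term, so $1-tQ_{132}^{(k-1,0,0,0)}(t,x)$ is an invertible power series. On the numerator, splitting $\sum_{j=0}^{\ell}C_jt^j=C_\ell t^\ell+\sum_{j=0}^{\ell-1}C_jt^j$ and then pulling the common factor $\sum_{j=0}^{\ell-1}C_jt^j$ out of the three remaining contributions assembles it into exactly the form displayed in (\ref{Qk00lgf-}). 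The only genuine work is the index bookkeeping in the two sums — specifically, recognizing that the truncation $i\le n-\ell$ in the recurrence becomes the "$-\sum_{j=0}^{\ell-1}C_jt^j$'' correction inside the first factor, and that the index shift in the second sum together with the initial values produces the "$Q_{132}^{(k-1,0,0,\ell-j)}$ minus polynomial tail'' shape. This is routine but error-prone, and it is the one place I would verify carefully.
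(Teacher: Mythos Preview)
Your proposal is correct and follows essentially the same route as the paper: you multiply the recurrence (\ref{Q-k00l}) by $t^n$, sum for $n\ge\ell+1$, use the initial values $Q_{j,132}^{(k,0,0,\ell)}(x)=C_j$ for $j\le\ell$, and then solve the resulting linear equation for $Q_{132}^{(k,0,0,\ell)}(t,x)$. Your index bookkeeping (the $p,q$ reparametrization and the shift $m=n-j-1$) is exactly what the paper is doing implicitly, and your remark on invertibility of $1-tQ_{132}^{(k-1,0,0,0)}(t,x)$ is a small extra justification the paper omits.
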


Note that we can compute generating functions 
of the form $Q_{132}^{(k,0,0,0)}(t,x)$  by Theorem~\ref{thm:Qk000} and 
generating functions of the form $Q_{132}^{(0,0,0,\ell)}(t,x)$ 
by Theorem~\ref{thm:Q0k00} so that we can use 
(\ref{Qk00lgf-})  to compute $Q_{132}^{(k,0,0,\ell)}(t,x)$ for 
any $k, \ell \geq 0$. 

\subsection{Explicit formulas for  $Q^{(k,0,0,\ell)}_{n,132}(x)|_{x^r}$}

By Theorem \ref{thm:k00l}, we have that  
\begin{eqnarray}\label{Qk00lgf--}
Q_{132}^{(k,0,0,1)}(t,x) &=&
\frac{t + (1 -tQ_{132}^{(k-1,0,0,0)}(t,x))
+t(Q_{132}^{(k-1,0,0,1)}(t,x)-1)}{1-tQ_{132}^{(k-1,0,0,0)}(t,x)} \nonumber \\
&=& \frac{1 -tQ_{132}^{(k-1,0,0,0)}(t,x)
+tQ_{132}^{(k-1,0,0,1)}(t,x)}{1-tQ_{132}^{(k-1,0,0,0)}(t,x)}.
\end{eqnarray}

We note that $Q_{132}^{(0,0,0,0)}(t,x) = C(tx)$ so that 
$Q_{132}^{(0,0,0,0)}(t,0)=1$.  As described in 
the previous section, we have computed $Q_{132}^{(k,0,0,0)}(t,0)$ 
for small values of $k$ in \cite{kitremtie}. 
Plugging those generating functions into (\ref{Qk00lgf--}), one 
can compute that 
\begin{eqnarray*}
Q_{132}^{(1,0,0,1)}(t,0) &=& \frac{1-t+t^2}{(1-t)^2},\\
Q_{132}^{(2,0,0,1)}(t,0) &=& \frac{1-2t+t^2+t^3}{1-3t+2t^2},\\
Q_{132}^{(3,0,0,1)}(t,0) &=& \frac{1-3t+2t^2+t^4}{1-4t+4t^2-t^3},\\
Q_{132}^{(4,0,0,1)}(t,0) &=& \frac{1-4t+4t^2-t^3+t^5}{1-5t+7t^2-3t^3}, \ \mbox{and}\\
Q_{132}^{(5,0,0,1)}(t,0) &=& \frac{1-5t+7t^2-3t^3+t^6}{1-6t+11t^2-7t^3+t^4}.
\end{eqnarray*}

It is easy to see that the maximum number of $\MMP{(1,0,0,1)}$-matches occurs 
when either $\sg$ ends with $1n$ or $n1$. It follows that 
for $n \geq 3$, the highest power of $x$ in $Q^{(1,0,0,1)}_{n,132}(x)$ is 
$x^{n-2}$ and its coefficient is $2C_{n-2}$. 
More generally, it is easy to see that the maximum number of 
$\MMP{(k,0,0,1)}$-matches occurs 
when $\sg \in S_n(132)$ ends with a shuffle of $1$ with 
$(n-k+1)(n-k) \ldots n$.  Thus, we have the following 
theorem.

\begin{theorem}\label{highQk001} 
For $n \geq k+1$, the highest power of $x$ in $Q^{(k,0,0,1)}_{n,132}(x)$ is 
$x^{n-k-1}$ and its coefficient is $(k+1)C_{n-k-1}$. 
\end{theorem}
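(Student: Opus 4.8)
The plan is to give a direct combinatorial argument characterizing the $132$-avoiding permutations of length $n$ that achieve the maximum number of $\MMP(k,0,0,1)$-matches, and then count them. First I would establish the upper bound. If $\sg_i$ matches $\MMP(k,0,0,1)$ then $\sg_i$ has at least $k$ larger entries to its right and at least one smaller entry to its right; in particular $\sg_i \neq n$ and $\sg_i$ is not the last entry, and moreover each of the $k$ larger entries to the right of $\sg_i$ also fails to be the rightmost entry (since $\sg_i$ is to its right and strictly smaller). The key structural observation, exactly as in the cue, is that for $\sg \in S_n(132)$ a match of $\MMP(k,0,0,1)$ forces the large entries to the right of $\sg_i$ to be increasing; combining this with the $A_i/B_i$ decomposition (Figure~\ref{fig:basic2}) I would argue that the number of matches is maximized precisely when $\sg$ ends in a shuffle of the singleton $1$ with the increasing run $(n-k+1)(n-k+2)\cdots n$, with the first $n-k-1$ positions occupied by $23\cdots(n-k)$ in increasing order. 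In that configuration every one of those first $n-k-1$ entries sees all $k$ of the top entries $n-k+1,\dots,n$ in quadrant I and sees the entry $1$ in quadrant IV, giving exactly $n-k-1$ matches, and no entry among the last $k+1$ positions can match (none has a smaller entry to its right once we are inside the terminal block, except $1$ itself which has nothing to its right, and the large entries have at most $k-1$ larger entries remaining). This shows the highest power of $x$ is $x^{n-k-1}$.

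Next I would count the permutations achieving this maximum. The preceding paragraph shows that $\sg$ attains $n-k-1$ matches only if (i) the $k$ entries $n-k+1,\dots,n$ together with $1$ appear, in some order forming a shuffle, as a suffix, and (ii) the remaining $n-k-1$ entries $\{2,3,\dots,n-k\}$ occupy the first $n-k-1$ positions and are arranged so that each of them actually matches $\MMP(k,0,0,1)$. Since all of $n-k+1,\dots,n$ and $1$ lie to the right of these positions, condition (ii) requires only that each of the first $n-k-1$ entries has at least one smaller entry among the first $n-k-1$ positions to its right together with $1$ — but $1$ is to the right of all of them, so the smaller-entry condition is automatic, and the $k$ larger entries are likewise automatic. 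Hence the arrangement of $\{2,\dots,n-k\}$ in the first $n-k-1$ slots is unconstrained except that $\sg$ must avoid $132$. I claim the interaction between the prefix and the suffix forces the prefix $\red[\sg_1\cdots\sg_{n-k-1}]$ to be an arbitrary $132$-avoiding permutation of $S_{n-k-1}$: the suffix entries are either the global maximum block (which can never serve as the "$3$" creating a new $132$ with two prefix entries since they are larger than everything to the left, and they are increasing so they never form the "$3\,2$" part) or the entry $1$ (which is smaller than everything, so it can only play the role of "$2$", and there is no entry after it). So the only $132$-patterns to worry about are internal to the prefix. Thus the number of valid prefixes is $C_{n-k-1}$.

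Finally I would count the shuffles. The suffix is an interleaving of the fixed increasing word $(n-k+1)(n-k+2)\cdots n$ of length $k$ with the single letter $1$; there are $\binom{k+1}{1} = k+1$ such interleavings, and each is automatically $132$-avoiding and compatible with the prefix by the argument above. Multiplying, the coefficient of $x^{n-k-1}$ is $(k+1)C_{n-k-1}$, which is the claim. The main obstacle I anticipate is the rigorous verification of the structural claim in the first paragraph — namely that no $132$-avoiding permutation outside this family can reach $n-k-1$ matches — since one must rule out configurations where some matching entries lie to the left of $n$ and trade off against matches to the right; I would handle this by induction on $n$ using the recurrence implicit in the $A_i/B_i$ decomposition (as in Theorem~\ref{thm:k00l}), checking that placing $n$ anywhere other than in the terminal increasing block strictly loses matches.
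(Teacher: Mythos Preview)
Your approach is essentially the same as the paper's, which offers only a one-line remark before the theorem: the maximum occurs when $\sigma$ ends with a shuffle of $1$ with $(n-k+1)(n-k+2)\cdots n$, and the coefficient $(k+1)C_{n-k-1}$ is read off from the $k+1$ shuffles times the $C_{n-k-1}$ choices for the $132$-avoiding prefix on $\{2,\dots,n-k\}$. You supply considerably more detail than the paper does, including the check that any such permutation is $132$-avoiding.

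Two small points. First, there is a slip in your first paragraph: you say the first $n-k-1$ positions are ``$23\cdots(n-k)$ in increasing order,'' but then (correctly) allow an arbitrary $132$-avoiding arrangement in the counting step; the latter is what you want throughout. Second, the ``main obstacle'' you flag is easier than you fear and does not require induction on the $A_i/B_i$ recurrence. A match at position $i$ needs $k+1$ entries to its right, so $i\le n-k-1$; this already gives the upper bound $n-k-1$. To achieve equality every position $1,\dots,n-k-1$ must match, which forces the values $1$ and $n-k+1,\dots,n$ into the last $k+1$ positions (each of these values fails the quadrant~IV or quadrant~I condition, respectively, so cannot itself match). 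Finally, $132$-avoidance together with any prefix entry forces the large values in the suffix to appear in increasing order, so the suffix is exactly a shuffle of $1$ with $(n-k+1)\cdots n$. This pins down the extremal family directly.
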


We can also compute 
\begin{eqnarray*}
&&Q_{132}^{(1,0,0,1)}(t,x)= 1+t+2 t^2+(3+2 x) t^3+(4+6 x+4 x^2) t^4+\\
&&(5+12 x+15 x^2+10 x^3) t^5+(6+20 x+36 x^2+42 x^3+28 x^4) t^6+\\
&&(7+30 x+70 x^2+112 x^3+126 x^4+84 x^5) t^7+\\
&&(8+42 x+120 x^2+240 x^3+360 x^4+396 x^5+264 x^6) t^8+\\
&&(9+56 x+189 x^2+450 x^3+825 x^4+1188 x^5+1287 x^6+858 x^7) t^9+\cdots.
\end{eqnarray*}

It is easy to explain some of these coefficients.
That is, we have the following theorem. 
\begin{theorem}  \ 
\begin{itemize}
\item[(i)] $\displaystyle Q^{(1,0,0,1)}_{n,132}(0) =  n$ for all $n \geq 1$, 
\item[(ii)] $\displaystyle Q^{(1,0,0,1)}_{n,132}(x)|_x = (n-1)(n-2)$ for all $n \geq 3$, and 
\item[(iii)] $\displaystyle Q^{(1,0,0,1)}_{n,132}(x)|_{x^{n-3}} = 3C_{n-2}$ for all $n \geq 3$.
\end{itemize}
\end{theorem}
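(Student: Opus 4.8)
The plan is to classify $\sg\in S_n(132)$ by the position $i$ of $n$ and use the decomposition $\sg=A_i(\sg)\,n\,B_i(\sg)$ of \fref{basic2}. The key preliminary step is to record how $\MMP(1,0,0,1)$-matches distribute over the three blocks. The point $n$ has nothing above it to its right, so $n$ never matches $\MMP(1,0,0,1)$. If $i<n$ then $B_i(\sg)$ is nonempty and lies entirely below $A_i(\sg)$, which lies entirely below $n$; hence every element of $A_i(\sg)$ has a point above it (namely $n$) and a point below it (any point of $B_i(\sg)$) among the points to its right, so it matches $\MMP(1,0,0,1)$ regardless of the internal shapes of $A_i(\sg)$ and $B_i(\sg)$. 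An element of $B_i(\sg)$ matches $\MMP(1,0,0,1)$ in $\sg$ if and only if it does so in $\red[B_i(\sg)]$, since the points to its left are irrelevant to this pattern. Therefore, for $i<n$,
\[
\mmp^{(1,0,0,1)}(\sg)=(i-1)+\mmp^{(1,0,0,1)}(\red[B_i(\sg)]).
\]
If $i=n$ then $B_n(\sg)=\emptyset$, $n$ still never matches, and an element of $A_n(\sg)$ matches $\MMP(1,0,0,1)$ in $\sg$ exactly when it has a smaller point to its right inside $A_n(\sg)$ (the point $n$ gives it a point above it but none below); thus $\mmp^{(1,0,0,1)}(\sg)=\mmp^{(0,0,0,1)}(\red[A_n(\sg)])$. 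Finally recall that $|B_i(\sg)|=n-i$ and that $A_i(\sg)$, $B_i(\sg)$ may be arbitrary $132$-avoiders on their blocks, so $|S_n^{(i)}(132)|=C_{i-1}C_{n-i}$.

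For (i), the displayed identity shows $\mmp^{(1,0,0,1)}(\sg)=0$ forces either $i=1$ with $\mmp^{(1,0,0,1)}(\red[B_1(\sg)])=0$, or $i=n$ with $\red[A_n(\sg)]$ the identity (the unique $132$-avoider with no point having a smaller point to its right). Hence $Q^{(1,0,0,1)}_{n,132}(0)=Q^{(1,0,0,1)}_{n-1,132}(0)+1=n$; this also follows at once from the formula $Q^{(1,0,0,1)}_{132}(t,0)=(1-t+t^2)/(1-t)^2$ recorded earlier. For (ii) and (iii) I will need the coefficients $Q^{(0,0,0,1)}_{m,132}(x)|_{x^r}$. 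By Theorem~\ref{thm:Q0k00}, $Q^{(0,0,0,1)}_{132}(t,x)=1/(1-tC(tx))=\sum_{k\ge0}t^kC(tx)^k$, whence $Q^{(0,0,0,1)}_{m,132}(x)|_{x^r}=[t^r]\,C(t)^{m-r}$. In particular $Q^{(0,0,0,1)}_{m,132}(x)|_{x}=m-1$, and, using the Catalan recurrence $C(t)^2=\sum_{j\ge0}C_{j+1}t^j$, $Q^{(0,0,0,1)}_{m,132}(x)|_{x^{m-2}}=[t^{m-2}]\,C(t)^2=C_{m-1}$.

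For (ii), summing over $i$: the case $i=1$ contributes $Q^{(1,0,0,1)}_{n-1,132}(x)|_{x}$; the case $i=n$ contributes $Q^{(0,0,0,1)}_{n-1,132}(x)|_{x}=n-2$; and for $1<i<n$ the displayed identity forces $i=2$, with $A_2$ determined and $B_2$ ranging over the $Q^{(1,0,0,1)}_{n-2,132}(0)=n-2$ permutations having no match, for a contribution of $n-2$. Hence $Q^{(1,0,0,1)}_{n,132}(x)|_{x}=Q^{(1,0,0,1)}_{n-1,132}(x)|_{x}+2(n-2)$, which with the base value $Q^{(1,0,0,1)}_{3,132}(x)|_{x}=2$ gives $(n-1)(n-2)$ by induction. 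For (iii), again summing over $i$: the case $i=n$ requires $\mmp^{(0,0,0,1)}(\red[A_n(\sg)])=(n-1)-2$, contributing $Q^{(0,0,0,1)}_{n-1,132}(x)|_{x^{n-3}}=C_{n-2}$; and for $i<n$ the displayed identity forces $\mmp^{(1,0,0,1)}(\red[B_i(\sg)])=|B_i(\sg)|-2$, which is the maximum possible and, by Theorem~\ref{highQk001} with $k=1$, is attained by $2C_{|B_i(\sg)|-2}$ permutations when $|B_i(\sg)|\ge2$ and by none when $|B_i(\sg)|=1$, while $A_i(\sg)$ is unrestricted ($C_{i-1}$ choices), for a total contribution of $\sum_{i=1}^{n-2}C_{i-1}\cdot 2C_{n-i-2}=2C_{n-2}$ by the Catalan recurrence. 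Adding, $Q^{(1,0,0,1)}_{n,132}(x)|_{x^{n-3}}=C_{n-2}+2C_{n-2}=3C_{n-2}$.

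The only delicate part is the case analysis in the first paragraph — in particular the observation that for $i<n$ every element of $A_i(\sg)$ automatically matches $\MMP(1,0,0,1)$ (this is exactly where the shape of the pattern, with $a,d\ge1$ and $b=c=0$, is used) and the reduction of the $i=n$ case to the $\MMP(0,0,0,1)$-statistic. After that, (i)--(iii) are routine inductions plus the Catalan recurrence and the quoted generating functions, so I anticipate no further difficulty.
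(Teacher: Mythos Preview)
Your proof is correct and follows essentially the same approach as the paper. You set up the decomposition a bit more cleanly by stating upfront that for $i<n$ every element of $A_i(\sg)$ automatically matches $\MMP(1,0,0,1)$ (so the contribution is $(i-1)+\mmp^{(1,0,0,1)}(\red[B_i(\sg)])$) and that the $i=n$ case reduces to the $\MMP(0,0,0,1)$-statistic on $A_n(\sg)$; the paper derives the same recursion by specializing its general Theorem for $Q_{132}^{(k,0,0,\ell)}$ and then argues parts (i)--(iii) case by case in the same way you do. The only cosmetic difference is that you extract $Q^{(0,0,0,1)}_{m,132}(x)|_{x^r}=[t^r]C(t)^{m-r}$ directly from the generating function, whereas the paper quotes the needed values from \cite{kitremtie}.
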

\begin{proof}
To see that $Q^{(1,0,0,1)}_{n,132}(0) =n$ for $n \geq 1$ note that  
the only permutations $\sg \in S_n(132)$ that have no 
$\MMP{(1,0,0,1)}$-matches are the identity 
$12\ldots n$ plus the permutations 
of the form $n(n-1) \ldots (n-k)12 \ldots (n-k-1)$ for $k=0,\ldots, n-1$. 

For $n \geq 3$, we claim that 
$$a(n) = Q^{(1,0,0,1)}_{n,132}(x)|_x =(n-1)(n-2).$$ 
This is easy to see by induction. That is, there are 
three ways to have a $\sg \in S_n(132)$ with $\mmp^{(1,0,0,1)}(\sg) =1$. 
That is, $\sg$ can start with $n$ in which case we have 
$a(n-1) =(n-2)(n-3)$ ways to arrange $\sg_2 \ldots \sg_n$ or 
$\sg$ can start with $(n-1)n$ in which case there can 
be no $\MMP(1,0,0,1)$ matches in $\sg_3 \ldots \sg_n$ which means 
that we have $(n-2)$ choices to arrange $\sg_3 \ldots \sg_n$ or 
$\sg$ can end with $n$ in which case $\sg_1 \ldots \sg_{n-1}$ must have exactly one $\MMP(0,0,0,1)$-match so that by our previous results 
in \cite{kitremtie}, we have 
$n-2$ ways to arrange $\sg_1 \ldots \sg_n$.  Thus, 
$a(n) = (n-2)(n-3) +2(n-2) = (n-1)(n-2)$.

For $Q^{(1,0,0,1)}_{n,132}(x)|_{x^{n-3}}$, we note that 
\begin{eqnarray*}
Q_{n,132}^{(1,0,0,1)}(x) &=& Q_{n-1,132}^{(0,0,0,1)}(x)+\sum_{i=1}^{n-1} 
Q_{i-1,132}^{(0,0,0,0)}(x)Q_{n-i,132}^{(1,0,0,1)}(x)\\
&=& Q_{n-1,132}^{(0,0,0,1)}(x)+\sum_{i=1}^{n-1} 
C_{i-1}x^{i-1}Q_{n-i,132}^{(1,0,0,1)}(x).
\end{eqnarray*}
Thus, 
$$Q_{n,132}^{(1,0,0,1)}(x)|_{x^{n-3}} = Q_{n-1,132}^{(0,0,0,1)}(x)|_{x^{n-3}}
+\sum_{i=1}^{n-2} 
C_{i-1}Q_{n-i,132}^{(0,0,0,1)}(x)|_{x^{n-i-2}}.$$

It was proved in \cite{kitremtie} that 
$Q_{n,132}^{(0,0,0,1)}(x)|_{x^{n-2}} =C_{n-1}$ for $n \geq 2$ and,
by Theorem \ref {highQk001}, \\
$Q_{n,132}^{(1,0,0,1)}(x)|_{x^{n-2}} =2C_{n-1}$ for 
$n \geq 2$.  Thus, for $n \geq 3$,
\begin{eqnarray*}
Q_{n,132}^{(1,0,0,1)}(x)|_{x^{n-3}} &=& C_{n-2} + \sum_{i=1}^{n-2} 
C_{i-1}2C_{n-i-2}\\
&=&C_{n-2}+ 2\sum_{i=1}^{n-2} 
C_{i-1}C_{n-i-2} = C_{n-2} +2C_{n-2} =3C_{n-2}.
\end{eqnarray*}
\end{proof}

One can also compute that 
\begin{align*}
&Q_{132}^{(2,0,0,1)}(t,x)= 1+t+2 t^2+5 t^3+(11+3 x) t^4+(23+13 x+6 x^2) t^5+
\ \ \ \ \ \ \ \ \ \ \ \ \ \\
&(47+40 x+30 x^2+15 x^3) t^6+(95+107 x+104 x^2+81 x^3+42 x^4) t^7+\\
&(191+266 x+308 x^2+301 x^3+238 x^4+126 x^5) t^8+\\
&(383+633 x+837 x^2+949 x^3+926 x^4+738 x^5+396 x^6) t^9+\cdots 
\end{align*}
and 
\begin{align*}
&Q_{132}^{(3,0,0,1)}(t,x) =1+t+2 t^2+5 t^3+14 t^4+
(38+4 x) t^5+(101+23 x+8 x^2) t^6+\ \ \ \ \ \\
&(266+92 x+51 x^2+20 x^3) t^7+(698+320 x+221 x^2+135 x^3+56 x^4) t^8+\\
&(1829+1038 x+821 x^2+614 x^3+392 x^4+168 x^5) t^9+\cdots.
\end{align*}

Here the sequence $(Q_{n,132}^{(2,0,0,1)}(0))_{n \geq 1}$ which 
starts out $1,2,5,11,23,47,95,191, \ldots $
is the sequence A083329 from the OEIS which counts the number 
of partitions $\pi$ of $n$, which when written 
in {\em increasing form}, is such that the permutation 
$flatten(\pi)$ avoids the permutations 213 and 312. For the  increasing form of a set partition $\pi$, one 
write the parts in increasing order separated by backslashes where 
the parts are written so that minimal elements in the parts increase.  
Then $flatten(\pi)$ is just the permutation that results 
by removing the backslashes.  For example, $\pi=13/257/468$ is written 
in increasing form and $flatten(\pi) =13257468$. 

\begin{problem} Find a bijection between the 
$\sg \in S_n(132)$ such that $\mmp^{(2,0,0,1)}(\sg) =0$ 
and the set partitions $\pi$ of $n$ such that $flatten(\pi)$ avoid 
231 and 312. 
\end{problem}

None of the sequences $(Q_{n,132}^{(k,0,0,1)}(0))_{n \geq 1}$ for 
$k=3,4,5$ appear in the OEIS.

Similarly, one can compute that 
\begin{equation*}\label{Qk002gf}
Q_{132}^{(k,0,0,2)}(t,x) =\frac{1 -(t+t^2)Q_{132}^{(k-1,0,0,0)}(t,x)+tQ_{132}^{(k-1,0,0,2)}(t,x)+
t^2Q_{132}^{(k-1,0,0,1)}(t,x)}{1-tQ_{132}^{(k-1,0,0,0)}(t,x)}.
\end{equation*}

Then one can use this formula to compute that 

\begin{align*}
&Q_{132}^{(1,0,0,2)}(t,x)= 1+t+2 t^2+5 t^3+(9+5x) t^4+(14+18 x+10 x^2) t^5+\ \ \ \ \ \\
&(20+42 x+45 x^2+25 x^3) t^6+(27+80 x+126 x^2+126 x^3+70 x^4) t^7+\\
&(35 +135x +280x^2+392x^3+378x^4+210 x^5)t^8\\
&(44+210 x+540 x^2+960 x^3+1260 x^4+1088 x^5+660 x^6) t^9+\cdots.
\end{align*}

It is easy to see that permutations $\sg \in S_n(132)$ which 
have the maximum number of $\MMP(1,0,0,2)$-matches in $\sg$ are those 
permutations that end in either $n12$, $n12$, $21n$, $2n1$ or $n21$. 
Thus, the highest power of $x$ that occurs in $Q_{n,132}^{(1,0,0,2)}(x)$ 
is $x^{n-3}$ which has a coefficient of $5C_{n-3}$.

\begin{align*}
&Q_{132}^{(2,0,0,2)}(t,x)= 1+t+2 t^2+5 t^3+14 t^4+ (33+9x) t^5 +
(72+42x+18x^2)t^6+\ \ \ \ \ \ \\
&(151+135x+98x^2+45x^3) t^7+(310+370 x+358 x^2+266 x^3+126 x^4) t^8+\\
&(629+931 x+1093 x^2+1047 x^3+784 x^4+378 x^5) t^9+\cdots.
\end{align*}

It is easy to see that permutations $\sg \in S_n(132)$ which 
have the maximum number of $\MMP(2,0,0,2)$-matches in $\sg$ are those 
permutations that end in either a shuffle of $21$ and $(n-1)n$ or  
$(n-1)n12$, $(n-1)12n$, and $12(n-1)n$.  
Thus, the highest power of $x$ that occurs in $Q_{n,132}^{(2,0,0,2)}(x)$ 
is $x^{n-4}$ which has a coefficient of $9C_{n-4}$.

\begin{align*}
&Q_{132}^{(3,0,0,2)}(t,x) = 1+t+2 t^2+5 t^3+14 t^4+42 t^5+
(118+14x) t^6+\ \ \ \ \ \ \ \ \ \ \ \ \ \ \ \ \ \ \ \ \\
&(319+82x+28x^2) t^7+(847+329x+184x^2+70x^3) t^8+\\
&(2231+1138x+807x^2+490x^3+196x^4) t^9+\cdots.
\end{align*}

It is easy to see that permutations $\sg \in S_n(132)$ which 
have the maximum number of $\MMP(2,0,0,2)$-matches in $\sg$ are those 
permutations that end in either a shuffle of $21$ and $(n-2)(n-1)n$ or  
$(n-2)(n-1)n12$, $(n-2)(n-1)12n$, $(n-2)12(n-1)n$, and $12(n-2)(n-1)n$.  
Thus, the highest power of $x$ that occurs in $Q_{n,132}^{(3,0,0,2)}(x)$ 
is $x^{n-5}$ which has a coefficient of $14C_{n-5}$.

None of the  series $(Q_{n,132}^{(k,0,0,2)}(0))_{n \geq 1}$ for 
$k=1,2,3$ appear in the OEIS.

\section{$Q_{n,132}^{(0,k,\ell,0)}(x)=Q_{n,132}^{(0,0,\ell,k)}(x)$
 where $k,\ell \geq 1$}

By Lemma \ref{sym}, we know that 
$Q_{n,132}^{(0,k,\ell,0)}(x)= Q_{n,132}^{(0,0,\ell,k)}(x)$. 
Thus, we will only consider $Q_{n,132}^{(0,k,\ell,0)}(x)$ in 
this section.

Suppose that $n \geq k$.  
It is clear that $n$ can never match 
the pattern $\MMP(0,k,\ell,0)$ for $k \geq 1$ in any 
$\sg \in S_n(132)$.    
For $i \geq k$,  it is easy to see that as we sum 
over all the permutations $\sg$ in $S_n^{(i)}(132)$, our choices 
for the structure for $A_i(\sg)$ will contribute a factor 
of $Q_{i-1,132}^{(0,k,\ell,0)}(x)$ to $Q_{n,132}^{(0,k,\ell,0)}(x)$ since 
none  of the elements to the right of $A_i(\sg)$ have any effect on whether 
an element of $A_i(\sg)$ matches $\MMP(0,k,\ell,0)$. 
The presence of $n$ and the elements of $A_i(\sg)$ ensures  
that an element in $B_i(\sg)$ matches $\MMP(0,k,\ell,0)$ in $\sg$ if 
and only if it matches $\MMP(0,0,\ell,0)$ in $B_i(\sg)$. Thus, 
our choices for $B_i(\sg)$ contribute a factor of 
$Q^{(0,0,\ell,0)}_{n-i,132}(x)$ to $Q_{n,132}^{(0,k,\ell,0)}(x)$.

Now suppose $i < k$ and $j =n-i$. In this case, $A_i(\sg)$ consists 
of $i-1$ elements.  In this situation, an element 
of $B_i(\sg)$ matches $\MMP(0,k,\ell,0)$ in $\sg$ if and only if it matches $\MMP(0,k-i,\ell,0)$ in 
$B_i(\sg)$.  Thus, 
our choices for $B_i(\sg)$ contribute a factor of 
$Q^{(0,k-i,\ell,0)}_{n-i,132}(x)$ 
to $Q_{n,132}^{(0,k,\ell,0)}(x)$. As before, our choices 
for the structure for $A_i(\sg)$ will contribute a factor 
of $Q_{i-1,132}^{(0,k,\ell,0)}(x)$ to $Q_{n,132}^{(0,k,\ell,0)}(x)$ but in 
such a situation $Q_{i-1,132}^{(0,k,\ell,0)}(x) =C_{i-1}$.

It follows that for $n \geq k$, 
\begin{eqnarray}\label{recQk00l}
Q_{n,132}^{(0,k,\ell,0)}(x) &=& \sum_{i=k}^{n} 
Q_{i-1,132}^{(0,k,\ell,0)}(x)Q_{n-i,132}^{(0,0,\ell,0)}(x) + \nonumber \\
&& \sum_{j=1}^{k-1} C_j Q_{n-j,132}^{(0,k-j,\ell,0)}(x).
\end{eqnarray}
Multiplying both sides of (\ref{recQk00l}) by $t^n$, summing for $n \geq \ell +1$ and observing 
that $Q_{j,132}^{(0,k,\ell,0)}(x) = C_j$ for $j \leq \ell$, we 
see that for $k, \ell \geq 1$, 
\begin{eqnarray*}\label{Qk00lc} 
Q_{132}^{(0,k,\ell,0)}(t,x)- \sum_{j=0}^{k-1} C_jt^j &=& 
t Q_{132}^{(0,0,\ell,0)}(t,x)\left(Q_{132}^{(0,k,\ell,0)}(t,x)-\sum_{s=0}^{k-2} C_st^s\right) + 
\nonumber \\
&&t \sum_{i=0}^{k-2} C_{i}t^{i} \left(Q_{132}^{(0,k-i-1,\ell,0)}(t,x)-\sum_{s=0}^{k-i-2} C_st^s\right).
\end{eqnarray*}
It follows that we have the following theorem. 
\begin{theorem}\label{thm:Q0kl0}
 For all $k,\ell \geq 1$, 
\begin{multline}\label{Q0kl0gf}
Q_{132}^{(0,k,\ell,0)}(t,x) = \\
\frac{C_{k-1} t^{k-1} + \sum_{j=0}^{k-2} C_j t^j \left(1 -tQ_{132}^{(0,0,\ell,0)}(t,x) 
+t(Q_{132}^{(0,k-i-1,\ell,0)}(t,x)-\sum_{s=0}^{k-i-2}C_s t^s)\right)}{1-tQ_{132}^{(0,0,\ell,0)}(t,x)}.
\end{multline}
\end{theorem}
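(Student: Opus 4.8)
The plan is to establish the recurrence (\ref{recQk00l}) by the standard decomposition of a $132$-avoiding permutation with respect to the position of its maximal entry, and then to repackage that recurrence as the rational expression (\ref{Q0kl0gf}) by passing to generating functions and solving a single linear equation.

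First I would fix $n\geq k$ and split $S_n(132)$ according to the position $i$ of the entry $n$, writing each $\sigma\in S_n^{(i)}(132)$ as $A_i(\sigma)\,n\,B_i(\sigma)$ as in Figure~\ref{fig:basic2}: here $A_i(\sigma)$ is a $132$-avoiding permutation of $i-1$ letters, $B_i(\sigma)$ is a $132$-avoiding permutation of $n-i$ letters, and every entry of $A_i(\sigma)$ exceeds every entry of $B_i(\sigma)$. Two observations make the count go through. Since the pattern $\MMP(0,k,\ell,0)$ imposes conditions only on the two left-hand quadrants, whatever lies to the right of an entry of $A_i(\sigma)$ is irrelevant to whether that entry matches, so $A_i(\sigma)$ always contributes a factor $Q_{i-1,132}^{(0,k,\ell,0)}(x)$; and $n$ itself can never match $\MMP(0,k,\ell,0)$ because $k\geq 1$.

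The substantive point is the behaviour of the entries of $B_i(\sigma)$: each such entry has exactly the $i$ entries of $A_i(\sigma)\cup\{n\}$ to its left and above it. Hence if $i\geq k$ it already has at least $k$ points in its second quadrant, so an entry of $B_i(\sigma)$ matches $\MMP(0,k,\ell,0)$ in $\sigma$ iff it matches $\MMP(0,0,\ell,0)$ in $B_i(\sigma)$, giving a factor $Q_{n-i,132}^{(0,0,\ell,0)}(x)$; whereas if $i<k$ such an entry matches in $\sigma$ iff it matches $\MMP(0,k-i,\ell,0)$ in $B_i(\sigma)$, giving $Q_{n-i,132}^{(0,k-i,\ell,0)}(x)$, and in this second regime $A_i(\sigma)$ has only $i-1<k$ letters, so $Q_{i-1,132}^{(0,k,\ell,0)}(x)=C_{i-1}$. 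Collecting these contributions over all $i$ yields (\ref{recQk00l}). Then I would multiply (\ref{recQk00l}) by $t^n$ and sum over $n\geq k$; the missing low-order terms are supplied by the fact that $Q_{j,132}^{(0,k,\ell,0)}(x)=C_j$ for $j\leq k-1$ (no entry of so short a permutation can have $k$ points to its left), which produces the $\sum_{j=0}^{k-1}C_j t^j$ on the left-hand side. On the right, the first sum of (\ref{recQk00l}) becomes $t\,Q_{132}^{(0,0,\ell,0)}(t,x)\,Q_{132}^{(0,k,\ell,0)}(t,x)$ corrected by the truncation $\sum_{s=0}^{k-2}C_s t^s$ of $Q_{132}^{(0,k,\ell,0)}(t,x)$, and the second sum becomes the finite combination $t\sum_{j}C_j t^j\bigl(Q_{132}^{(0,k-j-1,\ell,0)}(t,x)-\sum_{s}C_s t^s\bigr)$. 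Collecting all terms involving $Q_{132}^{(0,k,\ell,0)}(t,x)$ gives a linear equation with leading coefficient $1-t\,Q_{132}^{(0,0,\ell,0)}(t,x)$, and dividing through yields (\ref{Q0kl0gf}). Since Theorem~\ref{thm:Q00k0} provides $Q_{132}^{(0,0,\ell,0)}(t,x)$ explicitly and (\ref{Q0kl0gf}) expresses $Q_{132}^{(0,k,\ell,0)}$ in terms of the $Q_{132}^{(0,k',\ell,0)}$ with $k'<k$, this also gives a recursive way to compute all of these generating functions.

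I expect the only real difficulty to be bookkeeping: tracking the finitely many small values of $n$, and, inside the two sums, the terms where $B_i(\sigma)$ is too short to realize the required number of points, so that the polynomial correction terms $\sum_{s=0}^{\cdots}C_s t^s$ emerge exactly as written in (\ref{Q0kl0gf}). There is no analytic content beyond solving the single linear equation once the decomposition has been set up correctly.
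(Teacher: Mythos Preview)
Your proposal is correct and follows essentially the same approach as the paper: decompose $\sigma\in S_n(132)$ according to the position $i$ of $n$, analyze the contributions of $A_i(\sigma)$ and $B_i(\sigma)$ according to whether $i\geq k$ or $i<k$, obtain the recurrence (\ref{recQk00l}), and then pass to generating functions and solve the resulting linear equation for $Q_{132}^{(0,k,\ell,0)}(t,x)$. Your handling of the initial conditions ($Q_{j,132}^{(0,k,\ell,0)}(x)=C_j$ for $j\leq k-1$, summing the recurrence for $n\geq k$) is in fact cleaner than the paper's own write-up, which contains a couple of indexing slips at that step.
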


Since we can compute $Q_{132}^{(0,0,\ell,0)}(t,x)$ by 
Theorem \ref{thm:Q00k0}, we can use (\ref{Q0kl0gf}) to 
compute \\
$Q_{132}^{(0,k,\ell,0)}(t,x)$ for all $k, \ell \geq 1$.

\subsection{Explicit formulas for  $Q^{(0,k,\ell,0)}_{n,132}(x)|_{x^r}$}

It follows from Theorem \ref{thm:Q0kl0} and  Theorem \ref{thm:Q00k0} that 
\begin{eqnarray*}
Q_{132}^{(0,1,\ell,0)}(t,0) &=& \frac{1}{1-tQ_{132}^{(0,0,\ell,0)}(t,0)}\\
&=& \frac{1}{1-t\frac{1}{1-t(C_0+C_1t+ \cdots +C_{\ell -1}t^{\ell -1})}}\\
&=& \frac{1-t(C_0+C_1t+ \cdots +C_{\ell -1}t^{\ell -1})}{1-t(1+C_0+C_1t+ \cdots +C_{\ell -1}t^{\ell -1})}.
\end{eqnarray*}
Thus, one can compute  that 
\begin{eqnarray*}
&&Q_{132}^{(0,1,1,0)}(t,0) = \frac{1-t}{1-2t};\\
&&Q_{132}^{(0,1,2,0)}(t,0) = \frac{1-t-t^2}{1-2t-t^2};\\
&&Q_{132}^{(0,1,3,0)}(t,0) = \frac{1-t-t^2-2t^3}{1-2t-t^2-2t^3}, \ \mbox{and}\\
&&Q_{132}^{(0,1,4,0)}(t,0) = \frac{1-t-t^2-2t^3-5t^4}{1-2t-t^2-2t^3-5t^4}.\\
\end{eqnarray*}

Similarly, one can compute 
\begin{equation*}
Q_{132}^{(0,2,\ell,0)}(t,x) = \frac{1-tQ_{132}^{(0,0,\ell,0)}(t,x)+tQ_{132}^{(0,1,\ell,0)}(t,x)}{1-tQ_{132}^{(0,0,\ell,0)}(t,x)} = 1+\frac{tQ_{132}^{(0,1,\ell,0)}(t,x)}{1-tQ_{132}^{(0,0,\ell,0)}(t,x)}.
\end{equation*}
Note that 
\begin{eqnarray*}
Q_{132}^{(0,2,\ell,0)}(t,0) &=& 1+ \frac{t\frac{1-t(C_0+C_1t+ \cdots +C_{\ell -1}t^{\ell -1})}{1-t(1+C_0+C_1t+ \cdots +C_{\ell -1}t^{\ell -1})}}{1-t\frac{1}{1-t(C_0+C_1t+ \cdots +C_{\ell -1}t^{\ell -1})}}\\
&=& 1+\frac{t(1-t(C_0+C_1t+ \cdots +C_{\ell -1}t^{\ell -1}))^2}{(1-t(1+C_0+C_1t+ \cdots +C_{\ell -1}t^{\ell -1}))^2}.
\end{eqnarray*}
Thus, it follows that 
\begin{eqnarray*}
&&Q_{132}^{(0,2,1,0)}(t,0) = 1+t\left(\frac{1-t}{1-2t}\right)^2;\\
&&Q_{132}^{(0,2,2,0)}(t,0) =  1+t\left(\frac{1-t-t^2}{1-2t-t^2}\right)^2;\\
&&Q_{132}^{(0,2,3,0)}(t,0) =  
1+t\left(\frac{1-t-t^2-2t^3}{1-2t-t^2-2t^3}\right)^2,\ \mbox{and}\\
&&Q_{132}^{(0,2,4,0)}(t,0) =  
1+t\left(\frac{1-t-t^2-2t^3-5t^4}{1-2t-t^2-2t^3-5t^4}\right)^2.\\
\end{eqnarray*}

One can use (\ref{Q0kl0gf}) and our previous computations for 
$Q_{132}^{(0,0,\ell,0)}(t,x)$ to compute \\
$Q_{132}^{(0,1,\ell,0)}(t,x)$.

\begin{align*}
&Q_{132}^{(0,1,1,0)}(t,x) = 1+t+2 t^2+(4+x) t^3+(8+5 x+x^2) t^4+
(16+17 x+8 x^2+x^3) t^5+\\
&(32+49 x+38 x^2+12 x^3+x^4) t^6+(64+129 x+141 x^2+77 x^3+17 x^4+x^5) t^7+\\
&(128+321 x+453 x^2+361 x^3+143 x^4+23 x^5+x^6) t^8+\\
&(256+769 x+1326 x^2+1399 x^3+834 x^4+247 x^5+30 x^6+x^7) t^9+\cdots.
\end{align*}

\begin{align*}
&Q_{132}^{(0,1,2,0)}(t,x) = 1+t+2 t^2+5 t^3+(12+2 x) t^4+
(29+11 x+2 x^2) t^5+\ \ \ \ \ \ \ \ \ \ \ \ \ \ \ \ \ \ \ \\
&(70+45 x+15 x^2+2 x^3) t^6+(169+158 x+81 x^2+19 x^3+2 x^4) t^7+\\
& (408+509 x+359 x^2+129 x^3+23 x^4+2 x^5) t^8+\\
&(985+1550 x+1409 x^2+700 x^3+189 x^4+27 x^5+2 x^6) t^9+\cdots. 
\end{align*}

\begin{align*}
&Q_{132}^{(0,1,3,0)}(t,x) = 1+t+2 t^2+5 t^3+14 t^4+ (37+5 x)t^5+ 
(98+29 x+5 x^2)t^6+\ \ \ \ \ \ \ \ \ \   \\
& (261+124 x+39 x^2+5 x^3)t^7+ (694+475 x+207 x^2+49 x^3+5 x^4)t^8+\\
&(1845+1680 x+963 x^2+310 x^3+59 x^4+5 x^5)t^9 + \cdots. 
\end{align*}

\begin{align*}
&Q_{132}^{(0,1,4,0)}(t,x) =1+t+2 t^2+5 t^3+14 t^4+42 t^5+(118+14 x)t^6+ 
\ \ \ \ \ \ \ \ \ \ \ \ \ \ \ \ \ \ \ \ \ \ \ \ \ \\
& (331+84 x+14 x^2)t^7+ (934+370 x+112 x^2+14 x^3)t^8+ \\
&(2645+1455 x+608 x^2+140 x^3+14 x^4)t^9 + \cdots.
\end{align*}

We can explain the highest and second highest coefficients 
that appear in $Q_{n,132}^{(0,1,\ell,0)}(x)$ for all 
$\ell \geq 1$. That is, we have the following theorem. 

\begin{theorem}\label{h2ndhQ01l0} \ 
\begin{itemize}
\item[(i)] For all $\ell \geq 1$ and $n \geq \ell +1$, the highest power of $x$ in 
$Q_{n,132}^{(0,1,\ell,0)}(x)$ 
is $x^{n -\ell -1}$ and its coefficient is $C_\ell$.

\item[(ii)] $Q_{n,132}^{(0,1,1,0)}(x)|_{x^{n-3}} = 2+\binom{n-1}{2}$  for 
all $n \geq 4$. 

\item[(iii)] For all $\ell \geq 2$, 
$Q_{n,132}^{(0,1,\ell,0)}(x)|_{x^{n-\ell -2}} = 
C_{\ell+1} +C_{\ell} +2C_{\ell}(n-2 -\ell)$  for 
all $n \geq 3+ \ell$. 
\end{itemize}
\end{theorem}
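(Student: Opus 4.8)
The plan is to work entirely from the specialization of (\ref{recQk00l}) to $k=1$, in which the second sum is empty, so that for all $n\ge 1$
\[
Q_{n,132}^{(0,1,\ell,0)}(x) = \sum_{i=1}^{n} Q_{i-1,132}^{(0,1,\ell,0)}(x)\,Q_{n-i,132}^{(0,0,\ell,0)}(x),
\]
together with the following facts about the factor $Q_{N,132}^{(0,0,\ell,0)}(x)$, all available from Theorem \ref{thm:Q00k0} and \cite{kitremtie} (and already used earlier in this paper): it equals the constant $C_N$ for $N\le\ell$; its top term is $C_\ell x^{N-\ell}$ for $N\ge\ell$; and its coefficient of $x^{N-\ell-1}$ is $\binom{N}{2}$ when $\ell=1$ and is $C_{\ell+1}-C_\ell+2C_\ell(N-\ell-1)$ when $\ell\ge 2$ (the $\ell\ge 2$ formula requiring $N\ge\ell+2$). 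I would prove (i) first and then feed it into (ii) and (iii).

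For (i) I would show by strong induction on $n$ that $Q_{n,132}^{(0,1,\ell,0)}(x)$ has $x$-degree $\max(0,n-\ell-1)$, with leading coefficient $C_\ell$ once $n\ge\ell+2$. When $n\le\ell+1$ no entry of a permutation in $S_n$ can have $\ell+1$ points to its left, so $Q_{n,132}^{(0,1,\ell,0)}(x)=C_n$; in particular at $n=\ell+1$ this is the constant $C_{\ell+1}$, so the coefficient assertion is genuinely the one for $n\ge\ell+2$. For the inductive step I would bound the degree of the $i$-th summand by $\max(0,i-\ell-2)+\max(0,n-i-\ell)$ (inductive hypothesis on the first factor, top-term fact on the second); a short case analysis shows the $i=1$ summand $Q_{n-1,132}^{(0,0,\ell,0)}(x)$ has degree exactly $\max(0,n-1-\ell)$ while every summand with $i\ge 2$ has degree at most $n-\ell-2$. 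Hence for $n\ge\ell+2$ the coefficient of $x^{n-\ell-1}$ is contributed only by $i=1$ and equals the top coefficient $C_\ell$ of $Q_{n-1,132}^{(0,0,\ell,0)}(x)$.

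For (ii) and (iii), fix $n\ge\ell+3$ and extract the coefficient of $x^{n-\ell-2}$ from the same recursion. Recycling the degree bound, every summand with $3\le i\le n-1$ has $x$-degree strictly below $n-\ell-2$, so only $i=1,2,n$ contribute, and (using $Q_{1,132}^{(0,1,\ell,0)}(x)=1$ and $Q_{0,132}^{(0,0,\ell,0)}(x)=1$)
\begin{align*}
Q_{n,132}^{(0,1,\ell,0)}(x)|_{x^{n-\ell-2}} &= Q_{n-1,132}^{(0,0,\ell,0)}(x)|_{x^{n-\ell-2}} + Q_{n-2,132}^{(0,0,\ell,0)}(x)|_{x^{n-\ell-2}} \\
&\quad + Q_{n-1,132}^{(0,1,\ell,0)}(x)|_{x^{n-\ell-2}}.
\end{align*}
Since $n-\ell-2=(n-2)-\ell$, the middle term is the top coefficient $C_\ell$ of $Q_{n-2,132}^{(0,0,\ell,0)}(x)$; since $n-\ell-2=(n-1)-\ell-1$, the first term is the subleading coefficient of $Q_{n-1,132}^{(0,0,\ell,0)}(x)$ — that is, $\binom{n-1}{2}$ if $\ell=1$ and $C_{\ell+1}-C_\ell+2C_\ell(n-\ell-2)$ if $\ell\ge 2$ — and the last term is the leading coefficient $C_\ell$ of $Q_{n-1,132}^{(0,1,\ell,0)}(x)$ provided by (i), which applies because $n-1\ge\ell+2$. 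Adding up gives $2+\binom{n-1}{2}$ when $\ell=1$, which is (ii), and $C_{\ell+1}+C_\ell+2C_\ell(n-2-\ell)$ when $\ell\ge 2$, which is (iii).

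I do not anticipate a serious obstacle: the content is the degree bookkeeping that isolates which summands of the recursion reach the top or second-from-top degree, together with checking that each cited identity for $Q_{N,132}^{(0,0,\ell,0)}(x)$, and the just-proved statement (i), is invoked at a value of $N$ where it is valid. The one point requiring care is the small-$n$ boundary: (i)'s coefficient assertion really begins at $n=\ell+2$ (at $n=\ell+1$ the polynomial is the constant $C_{\ell+1}$), and the hypothesis $n\ge\ell+3$ in (ii) and (iii) is exactly what puts the $i=n$ summand $Q_{n-1,132}^{(0,1,\ell,0)}(x)$ into the range where its leading coefficient is $C_\ell$.
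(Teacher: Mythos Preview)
Your argument is correct, and for parts (ii) and (iii) it is essentially identical to the paper's: the paper also invokes the recursion
\[
Q_{n,132}^{(0,1,\ell,0)}(x)=\sum_{i=1}^n Q_{i-1,132}^{(0,1,\ell,0)}(x)\,Q_{n-i,132}^{(0,0,\ell,0)}(x),
\]
observes that only $i=1,2,n$ can reach degree $n-\ell-2$, and then plugs in the known top and second-from-top coefficients of $Q_{N,132}^{(0,0,\ell,0)}(x)$ together with (i) for the $i=n$ term.

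The one genuine difference is in part (i). The paper gives a direct combinatorial description: the $\sg\in S_n(132)$ with the maximum number of $\MMP(0,1,\ell,0)$-matches are exactly those starting with $n$, then a $132$-avoiding arrangement of $\{1,\dots,\ell\}$, then $\ell{+}1,\dots,n{-}1$ in increasing order, yielding top power $x^{n-\ell-1}$ with coefficient $C_\ell$. You instead establish the degree and leading coefficient by induction through the recursion, bounding the degree of each summand. Both are short; the paper's version has the advantage of actually exhibiting the extremal permutations, while yours stays purely algebraic and therefore dovetails neatly with the degree bookkeeping you reuse for (ii) and (iii).

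Your remark about the boundary $n=\ell+1$ is well taken: at that value $Q_{\ell+1,132}^{(0,1,\ell,0)}(x)=C_{\ell+1}$ (as the paper's own series for $\ell=1,2,3,4$ confirm), so the coefficient assertion in (i) really starts at $n\ge\ell+2$. The paper's combinatorial argument for (i) tacitly assumes this too, since it needs a nonempty increasing tail. This does not affect (ii) or (iii), where the hypothesis $n\ge\ell+3$ already guarantees $n-1\ge\ell+2$ when you invoke (i) for the $i=n$ term.
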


\begin{proof}
For (i), it is easy to see that the maximum 
number of $\MMP(0,1,\ell,0)$ matches occurs for a $\sg \in S_n(132)$ if 
$\sg$ starts with $n$ followed by any arrangement of 
$S_{\ell}(132)$ followed by $\ell +1, \ell +2, \ldots, n-1$ in 
increasing order. 
Thus, the highest power of $x$ in $Q_{132}^{(0,1,\ell,0)}(t,x)$ 
is $x^{n -\ell -1}$ and its coefficient is $C_\ell$.

For parts (ii) and (iii), we use the fact 
that 
\begin{equation*}
Q_{n,132}^{(0,1,\ell,0)}(x) = 
\sum_{i=1}^n Q_{i-1,132}^{(0,1,\ell,0)}(x) Q_{n-i,132}^{(0,0,\ell,0)}(x).
\end{equation*}
It was proved in \cite{kitremtie} that for $n > \ell$, 
the highest power of $x$ that occurs in 
$ Q_{n,132}^{(0,0,\ell,0)}(x)$ is $x^{n-\ell}$ and its coefficient 
is $C_{\ell}$.  It follows that for $3 \leq i \leq n-1$, the highest power of $x$ 
that appears in 
$Q_{i-1,132}^{(0,1,\ell,0)}(x) Q_{n-i,132}^{(0,0,\ell,0)}(x)$ is 
less than $n -\ell -2$. Thus, we have three cases to consider. \\
\ \\
{\bf Case 1.} $i=1$. In this case 
$Q_{i-1,132}^{(0,1,\ell,0)}(x) Q_{n-i,132}^{(0,0,\ell,0)}(x) = 
Q_{n-1,132}^{(0,0,\ell,0)}(x)$ so that we get a contribution of  $Q_{n-1,132}^{(0,0,\ell,0)}(x)|_{x^{n-\ell -2}}$.\\
\ \\
{\bf Case 2.} $i=2$. In this case 
$Q_{i-1,132}^{(0,1,\ell,0)}(x) Q_{n-i,132}^{(0,0,\ell,0)}(x) = 
Q_{n-2,132}^{(0,0,\ell,0)}(x)$ so that we get a contribution of 
$Q_{n-2,132}^{(0,0,\ell,0)}(x)|_{x^{n-\ell -2}}= C_{\ell}$.\\
\ \\
{\bf Case 3.} $i=n$. In this case 
$Q_{i-1,132}^{(0,1,\ell,0)}(x) Q_{n-i,132}^{(0,0,\ell,0)}(x) = 
Q_{n-1,132}^{(0,1,\ell,0)}(x)$ so that we get a contribution of 
$Q_{n-1,132}^{(0,1,\ell,0)}(x)|_{x^{n-\ell -2}}= C_{\ell}$.\\

Thus, it follows that 
\begin{equation*}
Q_{n,132}^{(0,1,\ell,0)}(x)_{x^{n-\ell -2}} = 2C_{\ell} + 
Q_{n-1,132}^{(0,0,\ell,0)}(x)|_{x^{n-\ell -2}}.
\end{equation*}

Then parts (ii) and (iii) follow from the fact that it was 
proved in \cite{kitremtie} that 
\begin{itemize}
\item[] $Q_{n,132}^{(0,0,1,0)}(x)|_{x^{n-2}} = \binom{n}{2}$ 
for $n \geq 2$ and,  for all $k \geq 2$,
\item[] 
$Q_{n,132}^{(0,0,k,0)}(x)|_{x^{n-k-1}} = C_{k+1}-C_{k}+ 
2(n-k-1)C_{k}$ for $n \geq k+1$.
\end{itemize}
\end{proof}

Similarly, one can compute the following.

\begin{align*}
&Q_{132}^{(0,2,1,0)}(t,x) = 1+t+2 t^2+5 t^3+  (12+2x)t^4+ (24+12x+2x^2)t^5 + 
\ \ \ \ \ \ \ \ \ \ \ \ \ \ \ \ \ \\
&(64+48x+18x^2+2x^3)t^6+(144+160 x+97 x^2+26 x^3+2 x^4)t^7+\\
&(320+480x+408x^2+184x^3+36x^4+2x^5)t^8+\\
& (704+1344 x+1479 x^2+958 x^3+327 x^4+48 x^5+2 x^6)t^9 + \cdots. 
\end{align*}

\begin{align*}
&Q_{132}^{(0,2,2,0)}(t,x) =  1+t+2 t^2+5 t^3+14 t^4+ (38+4 x)t^5+(102+26x+4x^2)t^6+\ \ \ \ \ \ \\
&(271+120 x+34 x^2+4 x^3) t^7 +(714+470 x+200 x^2+42 x^3+4 x^4)t^8+\\
&(1868+1672x+964x^2+304x^3+50x^4+4x^5)t^9 + \cdots. 
\end{align*}

\begin{align*}
&Q_{132}^{(0,2,3,0)}(t,x) = 1+t+2 t^2+5 t^3+14 t^4+42 t^5+(122+10 x)t^6+ 
\ \ \ \ \ \ \ \ \ \ \ \ \ \ \ \ \ \ \ \ \ \ \\
&(351+68 x+10 x^2)t^7+(1006+326x+88x^2+10x^3)t^8 + \\
&(2168+1364x+512x^2+108x^3+10x^4)t^9 + \cdots.
\end{align*}

\begin{align*}
&Q_{132}^{(0,2,4,0)}(t,x) = 1+t+2 t^2+5 t^3+14 t^4+42 t^5+132 t^6+\\
& (401+28 x)t^7 +(1206 +196x+28x^2)t^8 + (3618+964 x+252 x^2+28 x^3)t^9 + \cdots.
\end{align*}

In this case, we can explicitly calculate the highest and 
second highest coefficients that appear in 
$Q_{n,132}^{(0,2,\ell,0)}(x)$ for sufficiently large $n$. That is, 
we have the following theorem. 

\begin{theorem}\label{h2ndhQ02l0} \ 
\begin{itemize}
\item[(i)] For all $\ell \geq 1$ and $n \geq 3+\ell$, the highest power of $x$ that appears in $Q_{n,132}^{(0,2,\ell,0)}(x)$ is $x^{n-2-\ell}$ which appears 
with a coefficient of $2C_\ell$.  

\item[(ii)] For all $n \geq 5$, $\displaystyle Q_{n,132}^{(0,2,1,0)}(x)|_{x^{n-4}} = 6 + 2\binom{n-2}{2}$.

\item[(iii)] For all $\ell \geq 2$ and $n \geq 4+ \ell$, 
$\displaystyle Q_{n,132}^{(0,2,\ell,0)}(x)|_{x^{n-3-\ell}} =2C_{\ell +1} + 8C_{\ell} + 
4C_{\ell}(n-4-\ell)$. 
\end{itemize}
\end{theorem}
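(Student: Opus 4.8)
\emph{Proof plan.} The plan is to mimic the argument for Theorem~\ref{h2ndhQ01l0}, working from the $k=2$ instance of the recursion (\ref{recQk00l}). Since $C_1=1$, that recursion gives, for $n\geq 2$,
\[
Q_{n,132}^{(0,2,\ell,0)}(x)=\sum_{i=2}^{n}Q_{i-1,132}^{(0,2,\ell,0)}(x)\,Q_{n-i,132}^{(0,0,\ell,0)}(x)+Q_{n-1,132}^{(0,1,\ell,0)}(x).
\]
All three parts will follow by extracting the top two coefficients on the right, using: that $Q_{m,132}^{(0,0,\ell,0)}(x)$ has degree $m-\ell$ with leading coefficient $C_\ell$ for $m>\ell$ (from \cite{kitremtie}); that $Q_{m,132}^{(0,1,\ell,0)}(x)$ has degree $m-\ell-1$ with leading coefficient $C_\ell$ (Theorem~\ref{h2ndhQ01l0}(i)); and, inductively, that $Q_{m,132}^{(0,2,\ell,0)}(x)$ has degree $\max(0,m-\ell-2)$.

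For part (i) I would induct on $n$. In the product $Q_{i-1,132}^{(0,2,\ell,0)}(x)Q_{n-i,132}^{(0,0,\ell,0)}(x)$ the degree is at most $\max(0,i-\ell-3)+\max(0,n-i-\ell)$, and a short case check shows this is strictly less than $n-\ell-2$ whenever $3\le i\le n$, while $i=2$ contributes $Q_{n-2,132}^{(0,0,\ell,0)}(x)$, of degree $n-\ell-2$ and leading coefficient $C_\ell$. The extra summand $Q_{n-1,132}^{(0,1,\ell,0)}(x)$ also has degree $n-\ell-2$ and leading coefficient $C_\ell$. Hence the top degree is $n-\ell-2$ with coefficient $2C_\ell$; the base case $n=\ell+3$ is immediate, since then the only nonconstant contributions on the right are $Q_{\ell+1,132}^{(0,0,\ell,0)}(x)$ (from $i=2$) and $Q_{\ell+2,132}^{(0,1,\ell,0)}(x)$, each contributing $C_\ell x$ to the top.

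For parts (ii) and (iii) I would extract the coefficient of $x^{n-\ell-3}$ from the same recursion. The same degree bookkeeping shows that, apart from the extra summand, only $i\in\{2,3,n\}$ can contribute: $i=n$ gives the leading coefficient $2C_\ell$ of $Q_{n-1,132}^{(0,2,\ell,0)}(x)$ (by part (i)); $i=3$ gives $C_2$ times the leading coefficient $C_\ell$ of $Q_{n-3,132}^{(0,0,\ell,0)}(x)$, i.e.\ $2C_\ell$; $i=2$ gives $Q_{n-2,132}^{(0,0,\ell,0)}(x)|_{x^{n-\ell-3}}$, the \emph{second} coefficient of $Q_{n-2,132}^{(0,0,\ell,0)}(x)$; and the extra summand gives $Q_{n-1,132}^{(0,1,\ell,0)}(x)|_{x^{n-\ell-3}}$, the second coefficient of $Q_{n-1,132}^{(0,1,\ell,0)}(x)$. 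For $\ell=1$, the second coefficient of $Q_{m,132}^{(0,0,1,0)}(x)$ is $\binom{m}{2}$ and that of $Q_{m,132}^{(0,1,1,0)}(x)$ is $2+\binom{m-1}{2}$ (Theorem~\ref{h2ndhQ01l0}(ii)); inserting $m=n-2$ and $m=n-1$ and summing the four contributions gives $6+2\binom{n-2}{2}$, which is (ii). For $\ell\ge 2$, the second coefficient of $Q_{m,132}^{(0,0,\ell,0)}(x)$ is $C_{\ell+1}-C_\ell+2(m-\ell-1)C_\ell$ (from \cite{kitremtie}) and that of $Q_{m,132}^{(0,1,\ell,0)}(x)$ is $C_{\ell+1}+C_\ell+2C_\ell(m-\ell-2)$ (Theorem~\ref{h2ndhQ01l0}(iii)); inserting $m=n-2$ and $m=n-1$ and summing gives $2C_{\ell+1}+4C_\ell+4C_\ell(n-\ell-3)=2C_{\ell+1}+8C_\ell+4C_\ell(n-4-\ell)$, which is (iii).

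The main obstacle is nothing conceptual; it is the careful degree bookkeeping in the two coefficient extractions — confirming that exactly $i=2$ (respectively $i\in\{2,3,n\}$) together with the extra summand reach the relevant degree, and keeping the index shifts $m=n-1$ versus $m=n-2$ straight so that the quoted formulas for the leading and second coefficients of $Q^{(0,0,\ell,0)}$ and $Q^{(0,1,\ell,0)}$ are applied at the correct index and within their stated ranges of validity (in particular, treating $\ell=1$, where only the leading coefficient $\binom{m}{2}$ of $Q^{(0,0,1,0)}$ is available to us, separately from $\ell\ge 2$). The concluding identities are then routine arithmetic with Catalan numbers.
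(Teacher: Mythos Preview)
Your proposal is correct and follows essentially the same approach as the paper: both use the $k=2$ instance of recursion~(\ref{recQk00l}) and extract coefficients by identifying the same four contributing terms ($i=2$, $i=3$, $i=n$, and the extra summand $Q_{n-1,132}^{(0,1,\ell,0)}(x)$), invoking the same inputs from \cite{kitremtie} and Theorem~\ref{h2ndhQ01l0}. The only difference is in part~(i): the paper gives a one-line direct combinatorial description of the extremal permutations (those starting with $(n-1)n$ or $n(n-1)$, followed by a $132$-avoiding arrangement of $\{1,\dots,\ell\}$, followed by $\ell+1,\dots,n-2$ in increasing order), whereas you obtain the same conclusion by induction via the recursion and degree bookkeeping; both arguments are valid and short.
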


\begin{proof}

For (i), it is easy to see that the maximum 
number of $\MMP(0,1,\ell,0)$-matches occurs for a $\sg \in S_n(132)$ if 
$\sg$ starts with $(n-1)n$ or $n(n-1)$ followed by any arrangement of 
$S_{\ell}(132)$ followed by $\ell +1, \ell +2, \ldots, n-2$ in 
increasing order. Thus, the highest power of $x$ $Q_{132}^{(0,2,\ell,0)}(t,x)$ 
is $x^{n -\ell -2}$ and its coefficient is $2C_\ell$.

For parts (ii) and (iii), we use the fact 
that 
\begin{equation*}
Q_{n,132}^{(0,2,\ell,0)}(x) = Q_{n-1,132}^{(0,1,\ell,0)}(x)+
\sum_{i=2}^n Q_{i-1,132}^{(0,2,\ell,0)}(x) Q_{n-i,132}^{(0,0,\ell,0)}(x).
\end{equation*}
It was proved in \cite{kitremtie} that for $n > \ell$, 
the highest power of $x$ that occurs in 
$Q_{n,132}^{(0,0,\ell,0)}(x)$ is $x^{n-\ell}$ and its coefficient 
is $C_{\ell}$.  Moreover, it was proved in \cite{kitremtie} that 
$$Q_{n,132}^{(0,0,1,0)}(x)|_{x^{n-2}} = \binom{n}{2} \ \mbox{for } n \geq 2$$
and, for $\ell \geq 2$, 
$$Q_{n,132}^{(0,0,\ell,0)}(x)|_{x^{n-1-\ell}} = C_{\ell +1}-C_{\ell} 
+2C_{\ell}(n-3-\ell)  \ \mbox{for } n \geq 3+\ell.$$

It follows that for $4 \leq i \leq n-1$, the highest power of $x$ 
that appears in \\
$Q_{i-1,132}^{(0,2,\ell,0)}(x) Q_{n-i,132}^{(0,0,\ell,0)}(x)$ is 
less than $n -\ell -3$. Thus, we have four cases to consider when computing 
$Q_{n,132}^{(0,2,1,0)}(x)|_{x^{n-4}}$. 

\ \\
{\bf Case 1.} $Q_{n-1,132}^{(0,1,1,0)}(x)|_{x^{n-4}}$. In this case, 
by Theorem \ref{h2ndhQ01l0},  we have that, 
$$Q_{n-1,132}^{(0,1,1,0)}(x)|_{x^{n-4}} = 2+\binom{n-2}{2} \ \mbox{for } 
n \geq 5.$$
\ \\
{\bf Case 2.} $i=2$. In this case 
$Q_{i-1,132}^{(0,2,1,0)}(x) Q_{n-i,132}^{(0,0,1,0)}(x) = 
Q_{n-2,132}^{(0,0,1,0)}(x)$ and 
$$Q_{n-2,132}^{(0,0,1,0)}(x)|_{x^{n-2}} = \binom{n-2}{2} 
\ \mbox{for } n \geq 4.$$
{\bf Case 3.} $i=3$. In this case 
$Q_{i-1,132}^{(0,1,1,0)}(x) Q_{n-i,132}^{(0,0,1,0)}(x) = 
2Q_{n-3,132}^{(0,0,1,0)}(x)$ so that we get a contribution of 
$Q_{n-3,132}^{(0,0,1,0)}(x)|_{x^{n-4}}= 2C_{1}=2$ for $n \geq 5$.\\
\ \\
{\bf Case 4.} $i=n$. In this case 
$Q_{i-1,132}^{(0,2,1,0)}(x) Q_{n-i,132}^{(0,0,1,0)}(x) = 
Q_{n-1,132}^{(0,2,1,0)}(x)$ so that we get a contribution of 
$Q_{n-1,132}^{(0,2,1,0)}(x)|_{x^{n-4}}= 2C_1=2$ for $n \geq 5$.\\
\ \\
Thus, it follows that 
$$Q_{n,132}^{(0,2,1,0)}(x)|_{x^{n-4}}=6 +2\binom{n-2}{2} \ \mbox{for } 
n \geq 5.$$

Similarly,  we have four cases to consider when computing 
$Q_{n,132}^{(0,2,\ell,0)}(x)|_{x^{n-3-\ell}}$ for $\ell \geq 2$.\\ 
\ \\
{\bf Case 1.} $Q_{n-1,132}^{(0,1,\ell,0)}(x)|_{x^{n-3-\ell}}$. In this case,  
by Theorem \ref{h2ndhQ01l0},
we have that   
$$Q_{n-1,132}^{(0,1,\ell,0)}(x)|_{x^{n-3-\ell}} = 
C_{\ell +1} +C_{\ell} + 2C_{\ell}(n-3-\ell) 
 \ \mbox{for } n \geq 4+\ell.$$
\ \\
{\bf Case 2.} $i=2$. In this case 
$Q_{i-1,132}^{(0,2,\ell,0)}(x) Q_{n-i,132}^{(0,0,\ell,0)}(x) = 
Q_{n-2,132}^{(0,0,\ell,0)}(x)$ and 
$$Q_{n-2,132}^{(0,0,\ell,0)}(x)|_{x^{n-3-\ell}} = 
C_{\ell +1} -C_{\ell}+2C_{\ell}(n-3-\ell) 
\ \mbox{for } n \geq 3+\ell.$$
{\bf Case 3.} $i=3$. In this case 
$Q_{i-1,132}^{(0,1,\ell,0)}(x) Q_{n-i,132}^{(0,0,\ell,0)}(x) = 
2Q_{n-3,132}^{(0,0,\ell,0)}(x)$ so that we get a contribution of 
$Q_{n-3,132}^{(0,0,\ell,0)}(x)|_{x^{n-3-\ell}}= 2C_{\ell}$ for $n \geq 4+ \ell$.\\
\ \\
{\bf Case 4.} $i=n$. In this case 
$Q_{i-1,132}^{(0,2,\ell,0)}(x) Q_{n-i,132}^{(0,0,\ell,0)}(x) = 
Q_{n-1,132}^{(0,2,\ell,0)}(x)$ so that we get a contribution of 
$Q_{n-1,132}^{(0,2,1,0)}(x)|_{x^{n-3-\ell}}= 2C_\ell$ for $n \geq 4+\ell$.\\
\ \\
Thus, it follows that for $n \geq 4+\ell$, 
\begin{eqnarray*}
Q_{n,132}^{(0,2,\ell,0)}(x)|_{x^{n-3-\ell}}&=&
2C_{\ell +1} +4C_{\ell}+4C_{\ell}(n-3-\ell) \\
&=& 2C_{\ell +1} +8C_{\ell}+4C_{\ell}(n-4-\ell).
\end{eqnarray*}

For example, when $\ell =2$, we obtain that 
$$Q_{n,132}^{(0,2,\ell,0)}(x)|_{x^{n-5}} =26+ 8(n-6) \ \mbox{for } n \geq 6$$
and, when $\ell =3$, we obtain that 
$$Q_{n,132}^{(0,3,\ell,0)}(x)|_{x^{n-6}} =68+ 20(n-7) \ \mbox{for } n \geq 7$$
which agrees with the series that we computed. 
\end{proof}

\section{$Q_{n,132}^{(0,k,0,\ell)}(x)$ where $k,\ell \geq 1$}

Suppose that $n \geq k+\ell $.  
It is clear that $n$ can never match 
the pattern $\MMP(0,k,0,\ell)$ for $k \geq 1$ in any 
$\sg \in S_n(132)$.    There are three cases that we 
have to consider when dealing with the contribution of the permutations 
of $S^{(i)}_n(132)$ to $Q^{(0,k,0,\ell)}_{n,132}(x)$.\\
\ \\
{\bf Case 1.} $i \leq k-1$. 
It is easy to see that as we sum 
over all the permutations $\sg$ in $S_n^{(i)}(132)$, our choices 
for the structure for $A_i(\sg)$ will contribute a factor 
of $C_{i-1}$ to $Q_{n,132}^{(0,k,0,\ell)}(x)$ since no element in $A_i(\sg)$ can match $\MMP(0,k,0,\ell)$. The presence of $n$ plus the  
elements in $A_i(\sg)$ ensure that an element 
in $B_i(\sg)$ matches $\MMP(0,k,0,\ell)$ in $\sg$ if 
and only if it matches $\MMP(0,k-i,0,\ell)$ in $B_i(\sg)$. Hence  
our choices for $B_i(\sg)$ contribute a factor of 
$Q^{(0,k-i,0,\ell)}_{n-i,132}(x)$ to $Q_{n,132}^{(0,k,0,\ell)}(x)$.
Thus, in this case, the elements of  $S_n^{(i)}(132)$ contribute 
$C_{i-1}Q^{(0,k-i,0,\ell)}_{n-i,132}(x)$  to $Q_{n,132}^{(0,k,0,\ell)}(x)$.\\
\ \\
{\bf Case 2.} $k \leq i \leq n-\ell$. Note that in this case, there are at least $k$ elements in $A_i(\sg) \cup \{n\}$ and at least $\ell$ in $B_i(\sg)$.  
The presence of the  
elements in $B_i(\sg)$ ensure that an element 
in $A_i(\sg)$ matches $\MMP(0,k,0,\ell)$ in $\sg$ if 
and only if it matches $\MMP(0,k,0,0)$ in $A_i(\sg)$. Hence 
our choices for $A_i(\sg)$ contribute a factor of 
$Q^{(0,k,0,0)}_{n-i,132}(x)$ to $Q_{n,132}^{(0,k,0,\ell)}(x)$.

The presence of $n$ plus the  
elements in $A_i(\sg)$ ensures that an element 
in $B_i(\sg)$ matches $\MMP(0,k,0,\ell)$ in $\sg$ if 
and only if it matches $\MMP(0,0,0,\ell)$ in $B_i(\sg)$. Thus, 
our choices for $B_i(\sg)$ contribute a factor of 
$Q^{(0,0,0,\ell)}_{n-i,132}(x)$ to $Q_{n,132}^{(0,k,0,\ell)}(x)$.
Thus, in this case, the elements of  $S_n^{(i)}(132)$ contribute 
$Q^{(0,k,0,0)}_{i-1,132}(x)Q^{(0,0,0,\ell)}_{n-i,132}(x)$  to $Q_{n,132}^{(0,k,0,\ell)}(x)$.\\
\ \\
{\bf Case 3.} $i > n-\ell$. 
Let $j = n-i$ so that $j < \ell$. 
It is easy to see that as we sum 
over all the permutations $\sg$ in $S_n^{(i)}(132)$, our choices 
for the structure for $B_i(\sg)$ will contribute a factor 
of $C_{j}$ to $Q_{n,132}^{(0,k,0,\ell)}(x)$ since no element in $B_i(\sg)$ can match $\MMP(0,k,0,\ell)$. The presence of the  
elements in $B_i(\sg)$ ensures that an element 
in $A_i(\sg)$ matches $\MMP(0,k,0,\ell)$ in $\sg$ if 
and only if it matches $\MMP(0,k,0,\ell-j)$ in $A_i(\sg)$. Hence  
our choices for $A_i(\sg)$ contribute a factor of 
$Q^{(0,k,0,\ell-j)}_{n-j-1,132}(x)$ to $Q_{n,132}^{(0,k,0,\ell)}(x)$.
Thus, in this case, the elements of  $S_n^{(i)}(132)$ contribute 
$C_{j}Q^{(0,k,0,\ell-j)}_{n-j-1,132}(x)$  to $Q_{n,132}^{(0,k,0,\ell)}(x)$.\\

It follows that for $n \geq k+\ell$, 

\begin{multline} \label{Q0k0lrec}
Q^{(0,k,0,\ell)}_{n,132}(x) = \\
 \sum_{i=1}^{k-1} 
C_{i-1} Q^{(0,k-i,0,\ell)}_{n-i,132}(x) + 
\sum_{i=k}^{n-\ell} Q^{(0,k,0,0)}_{i-1,132}(x)Q^{(0,0,0,\ell)}_{n-i,132}(x) + 
\sum_{j=0}^{\ell -1} C_j Q^{(0,k,0,\ell -j)}_{n-j-1,132}(x).
\end{multline}
Multiplying both sides of (\ref{Q0k0lrec}) by $t^n$ and summing, we see 
that \\
\ \\
$\displaystyle Q_{132}^{(0,k,0,\ell)}(t,x) 
- \sum_{j=0}^{k+\ell -1}C_jt^j =  t\left(\sum_{j=0}^{k-2} C_j t^j \left(Q_{132}^{(0,k,0,\ell-j-1)}(t,x) - 
\sum_{s=0}^{k-j-2} C_s t^s\right)\right) + $ \\
\ \ \ \ $\displaystyle  t \left(Q_{132}^{(0,k,0,0)}(t,x) - 
\sum_{u=0}^{k-2} C_u t^u\right)\left(Q_{132}^{(0,0,0,\ell,)}(t,x) - 
\sum_{v=0}^{\ell-1} C_v t^v\right)+ $ \\
\ \ \ \ $\displaystyle t \left(\sum_{j=0}^{\ell-1} C_j t^j \left(Q_{132}^{(0,k,0,\ell-j)}(t,x) - 
\sum_{s=0}^{k-j-2} C_s t^s\right)\right).$\\
\ \\
Thus
\begin{multline}\label{Q0k0lgf}
Q_{132}^{(0,k,0,\ell)}(t,x) = \\
\sum_{j=0}^{k+\ell -1}C_jt^j + 
t\left(\sum_{j=0}^{k-2} C_j t^j \left(Q_{132}^{(0,k,0,\ell-j-1)}(t,x) - 
\sum_{s=0}^{k-j-2} C_s t^s\right)\right) + \\
t \left(Q_{132}^{(0,k,0,0)}(t,x) - 
\sum_{u=0}^{k-1} C_u t^u\right)\left(Q_{132}^{(0,0,0,\ell)}(t,x) - 
\sum_{v=0}^{\ell-1} C_v t^v\right)+  \\
t \left(\sum_{j=0}^{\ell-1} C_j t^j \left(Q_{132}^{(0,k,0,\ell-j)}(t,x) - 
\sum_{w=0}^{k+\ell-j-2} C_w t^w\right)\right).
\end{multline}
Note the first term of the last term on the right-hand side of 
(\ref{Q0k0lgf}) is $t(Q_{132}^{(0,k,0,\ell)}(t,x) - \sum_{w=0}^{k+\ell-2} C_w t^w)$ 
so that we can bring the term $tQ_{132}^{(0,k,0,\ell)}(t,x)$ to the other side 
and solve  
$Q_{132}^{(0,k,0,\ell)}(t,x)$ to obtain the following 
theorem. 
\begin{theorem}\label{thm:Q0k0l} For all $k,\ell \geq 1$, 
\begin{equation}\label{Q0k0lgf2}
Q_{132}^{(0,k,0,\ell)}(t,x) = \frac{\Phi_{k,\ell}(t,x)}{1-t}
\end{equation}
where \\
$\displaystyle 
\Phi_{k,\ell}(t,x) =  \sum_{j=0}^{k+\ell -1}C_jt^j -\sum_{j=0}^{k+\ell-2}C_jt^{j+1}
+t\left(\sum_{j=0}^{k-2} C_j t^j \left(Q_{132}^{(0,k,0,\ell-j-1)}(t,x) - 
\sum_{s=0}^{k-j-2} C_s t^s\right)\right) + $ \\
$\displaystyle  t \left(Q_{132}^{(0,k,0,0)}(t,x) - 
\sum_{u=0}^{k-1} C_u t^u\right)\left(Q_{132}^{(0,0,0,\ell)}(t,x) - 
\sum_{v=0}^{\ell-1} C_v t^v\right)+ $ \\
$\displaystyle t \left(\sum_{j=1}^{\ell-1} C_j t^j \left(Q_{132}^{(0,k,0,\ell-j)}(t,x) - 
\sum_{w=0}^{k+\ell-j-2} C_w t^w\right)\right)$.
\end{theorem}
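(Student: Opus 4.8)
The plan is to derive the result from the standard decomposition of a $132$-avoiding permutation $\sg = \sg_1 \ldots \sg_n$ by the position $i$ of its largest entry $n$, which the discussion preceding the statement has already organized into the recurrence (\ref{Q0k0lrec}). First I would establish that recurrence carefully. Fixing $\sg \in S_n^{(i)}(132)$, since $k \geq 1$ the entry $n$ itself can never match $\MMP(0,k,0,\ell)$, so every match lies in $A_i(\sg)$ or in $B_i(\sg)$, where all $i-1$ entries of $A_i(\sg)$ together with $n$ sit above-and-left of every entry of $B_i(\sg)$. Splitting according to the size $i$ of $A_i(\sg)\cup\{n\}$ and the size $n-i$ of $B_i(\sg)$ produces three regimes. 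When $i \leq k-1$, no entry of $A_i(\sg)$ has enough points in quadrant~II, while an entry of $B_i(\sg)$ matches $\MMP(0,k,0,\ell)$ in $\sg$ iff it matches $\MMP(0,k-i,0,\ell)$ in $B_i(\sg)$, so $S_n^{(i)}(132)$ contributes $C_{i-1}\,Q^{(0,k-i,0,\ell)}_{n-i,132}(x)$. When $k \leq i \leq n-\ell$, the quadrant~II constraint is automatic for every entry of $A_i(\sg)$ and the quadrant~IV constraint is automatic for every entry of $B_i(\sg)$, so the two sides decouple and contribute $Q^{(0,k,0,0)}_{i-1,132}(x)\,Q^{(0,0,0,\ell)}_{n-i,132}(x)$. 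When $i > n-\ell$, writing $j = n-i < \ell$, no entry of $B_i(\sg)$ can match, while an entry of $A_i(\sg)$ matches iff it matches $\MMP(0,k,0,\ell-j)$ in $A_i(\sg)$, giving $C_j\,Q^{(0,k,0,\ell-j)}_{n-j-1,132}(x)$. Summing over $i$ gives (\ref{Q0k0lrec}) for $n \geq k+\ell$, and for $n < k+\ell$ no entry can match, so $Q_{n,132}^{(0,k,0,\ell)}(x) = C_n$.

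Next I would pass to generating functions by multiplying (\ref{Q0k0lrec}) by $t^n$ and summing over $n \geq k+\ell$, using the boundary values $Q_{n,132}^{(0,k,0,\ell)}(x)=C_n$ for $n < k+\ell$. The first sum becomes $t\sum_{j=0}^{k-2}C_j t^j Q_{132}^{(0,k-j-1,0,\ell)}(t,x)$ up to truncation corrections $\sum_s C_s t^s$ that remove the low-degree terms where $Q^{(0,k',0,\ell)}_{m,132}(x)=C_m$; the second becomes a Cauchy product $t\,Q_{132}^{(0,k,0,0)}(t,x)\,Q_{132}^{(0,0,0,\ell)}(t,x)$ with each factor truncated below the degree at which it ceases to equal a Catalan number; and the third becomes $t\sum_{j=0}^{\ell-1}C_j t^j Q_{132}^{(0,k,0,\ell-j)}(t,x)$ with analogous corrections. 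Collecting the constant-in-$n$ Catalan contribution $\sum_{j=0}^{k+\ell-1}C_j t^j$ on the left and keeping track of the overlap polynomial $-\sum_{j=0}^{k+\ell-2}C_j t^{j+1}$ produced by these corrections yields the identity (\ref{Q0k0lgf}).

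Finally I would isolate $Q_{132}^{(0,k,0,\ell)}(t,x)$. The crucial observation is that the $j=0$ summand of the last sum in (\ref{Q0k0lgf}) equals $t\bigl(Q_{132}^{(0,k,0,\ell)}(t,x)-\sum_{w=0}^{k+\ell-2}C_w t^w\bigr)$, so subtracting $t\,Q_{132}^{(0,k,0,\ell)}(t,x)$ from both sides produces the linear equation $(1-t)\,Q_{132}^{(0,k,0,\ell)}(t,x)=\Phi_{k,\ell}(t,x)$, where $\Phi_{k,\ell}$ is (\ref{Q0k0lgf}) with that $j=0$ term replaced by its Catalan correction $-\sum_{w=0}^{k+\ell-2}C_w t^{w+1}$ and with the last sum reindexed to run over $j=1,\ldots,\ell-1$; dividing by $1-t$ then gives (\ref{Q0k0lgf2}). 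Theorem \ref{thm:Q0k00} supplies closed forms for the right-hand generating functions $Q_{132}^{(0,k,0,0)}(t,x)$ and $Q_{132}^{(0,0,0,\ell)}(t,x)$, while the other generating functions appearing on the right all have a strictly smaller last parameter, so (\ref{Q0k0lgf2}) is a genuine recursion in $\ell$ that terminates. The main obstacle is purely bookkeeping: ensuring that each finite sum in (\ref{Q0k0lrec}) converts to the correct generating-function expression with exactly the right truncation polynomials subtracted, and that the boundary values for $n<k+\ell$ are absorbed consistently into the leading Catalan terms; there is no conceptual difficulty beyond careful accounting.
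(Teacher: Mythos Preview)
Your proposal is correct and follows essentially the same approach as the paper: derive the three-case recurrence (\ref{Q0k0lrec}) from the position-of-$n$ decomposition, multiply by $t^n$ and sum with Catalan truncation corrections to obtain (\ref{Q0k0lgf}), then isolate the $j=0$ term of the last sum to solve for $Q_{132}^{(0,k,0,\ell)}(t,x)$. One small slip to fix in your Case~2 explanation: you have the automatic constraints swapped---for entries of $A_i(\sg)$ it is the quadrant~IV constraint that is automatic (supplied by $B_i(\sg)$), and for entries of $B_i(\sg)$ it is the quadrant~II constraint that is automatic (supplied by $A_i(\sg)\cup\{n\}$); your resulting contribution $Q^{(0,k,0,0)}_{i-1,132}(x)\,Q^{(0,0,0,\ell)}_{n-i,132}(x)$ is nonetheless correct.
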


Note that we can compute $Q_{132}^{(0,k,0,0)}(t,x)$ and 
$Q_{132}^{(0,0,0,\ell)}(t,x)$ by Theorem \ref{thm:Q0k00} so 
that we can use (\ref{Q0k0lgf2}) to compute $Q_{132}^{(0,k,0,\ell)}(t,x)$ 
for all $k,\ell \geq 1$.

\subsection{Explicit formulas for  $Q^{(0,k,0,\ell)}_{n,132}(x)|_{x^r}$}

It follows from Theorem \ref{thm:Q0k0l} that 
\begin{equation*}\label{Q0101}
Q_{132}^{(0,1,0,1)}(t,x) = \frac{1+tQ_{132}^{(0,1,0,0)}(t,x)(Q_{132}^{(0,0,0,1)}(t,x)-1)}{1-t},
\end{equation*}
and
\begin{multline}\label{Q0201}
Q_{132}^{(0,2,0,1)}(t,x) = \\
\frac{1+tQ_{132}^{(0,1,0,1)}(t,x)+ 
tQ_{132}^{(0,2,0,0)}(t,x)Q_{132}^{(0,0,0,1)}(t,x)-tQ_{132}^{(0,2,0,0)}(t,x)-tQ_{132}^{(0,0,0,1)}(t,x)}{1-t}.
\end{multline}

Similarly, using the fact that 
$$Q_{132}^{(0,2,0,0)}(t,x) =Q_{132}^{(0,0,0,2)}(t,x) \mbox{ and } 
Q_{132}^{(0,2,0,1)}(t,x) =Q_{132}^{(0,1,0,2)}(t,x),$$ one can show that 

\begin{multline}\label{Q0202}
Q_{132}^{(0,2,0,2)}(t,x) =\\
 \frac{1+(t+t^2)Q_{132}^{(0,2,0,1)}(t,x)+ t(Q_{132}^{(0,2,0,0)}(t,x))^2 -(2t+t^2)Q_{132}^{(0,2,0,0)}(t,x)}{1-t}.
\end{multline}

Here are the first few terms of these series. 

\begin{align*}
&Q_{132}^{(0,1,0,1)}(t,x) =1+t+2 t^2+(4+x) t^3+(7+5 x+2 x^2) t^4+(11+14 x+12 x^2+5 x^3) t^5+\\
& (16+30 x+39 x^2+33 x^3+14 x^4) t^6+
(22+55 x+95 x^2+117 x^3+98 x^4+42 x^5) t^7+\\
& (29+91 x+195 x^2+309 x^3+36x^4+306 x^5+132 x^6) t^8+\\
&(37+140 x+357 x^2+684 x^3+1028 x^4+1197 x^5+990 x^6+429 x^7) t^9+\cdots.
\end{align*}

\vspace{-0.5cm}

\begin{align*}
&Q_{132}^{(0,2,0,1)}(t,x) =1+t+2 t^2+5 t^3+(12+2 x) t^4+(25+13 x+4 x^2) t^5+
\ \ \ \ \ \ \ \ \ \ \ \ \ \ \ \ \ \ \ \ \ \ \ \\
& (46+45 x+31 x^2+10 x^3) t^6+(77+115 x+124 x^2+85 x^3+28 x^4) t^7+\\
&(120+245 x+359 x^2+370 x^3+252 x^4+84 x^5) t^8+\\
&(177+462 x+854 x^2+1159 x^3+1160 x^4+786 x^5+264 x^6) t^9+\cdots.
\end{align*}

\vspace{-0.5cm}

\begin{align*}
&Q_{132}^{(0,2,0,2)}(t,x) = 1+t+2 t^2+5 t^3+14 t^4+(38+4 x) t^5+
(91+33 x+8 x^2) t^6+
\ \ \ \ \ \ \ \ \ \ \ \ \ \ \\
&(192+139 x+78 x^2+20 x^3) t^7+(365+419 x+377 x^2+213 x^3+56 x^4) t^8+\\
&(639+1029 x+1280 x^2+1116 x^3+630 x^4+168 x^5) t^9+\cdots.
\end{align*}

It is easy to find the coefficients of the highest power of 
$x$ in $Q^{(0,k,0,\ell)}_{n,132}(x)$. That is, we have 
the following theorem. 

\begin{theorem}
For $n \geq k+\ell +1$, the highest power of $x$ that occurs in 
 $Q^{(0,k,0,\ell)}_{n,132}(x)$ is $x^{n-k -\ell}$ 
which occurs with a coefficient of $C_kC_{\ell}C_{n-k - \ell}$.
\end{theorem}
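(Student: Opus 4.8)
The plan is to identify directly the permutations $\sg \in S_n(132)$ attaining the maximal number of $\MMP(0,k,0,\ell)$-matches and then count them. First I would record the positional restriction: if $\sg_j$ matches $\MMP(0,k,0,\ell)$, then there must be at least $k$ positions to the left of $j$ and at least $\ell$ positions to the right of $j$, so $k+1 \le j \le n-\ell$. Since $n \ge k+\ell+1$, this is an interval of exactly $n-k-\ell$ positions, so $\mmp^{(0,k,0,\ell)}(\sg) \le n-k-\ell$ for every $\sg \in S_n(132)$, and equality forces \emph{every} position $j \in \{k+1,\dots,n-\ell\}$ to match $\MMP(0,k,0,\ell)$. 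Thus $Q_{n,132}^{(0,k,0,\ell)}(x)|_{x^{n-k-\ell}}$ equals the number of $\sg \in S_n(132)$ for which all of $\sg_{k+1},\dots,\sg_{n-\ell}$ match $\MMP(0,k,0,\ell)$.

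Next I would pin down the value structure of such a $\sg$. Write $T=\{n-k+1,\dots,n\}$, $M=\{\ell+1,\dots,n-k\}$, $D=\{1,\dots,\ell\}$. If $\sg_j \in T$ for some $j$ in the range $k+1,\dots,n-\ell$, then there are at most $k-1$ entries of $\sg$ exceeding $\sg_j$, so $\sg_j$ cannot have $k$ larger entries to its left; symmetrically, if $\sg_j \in D$ then $\sg_j$ cannot have $\ell$ smaller entries to its right. Hence positions $k+1,\dots,n-\ell$ carry none of $T\cup D$, and since there are $n-k-\ell=|M|$ such positions they carry exactly the values of $M$. Consequently the positions $\{1,\dots,k\}\cup\{n-\ell+1,\dots,n\}$ carry exactly the values of $T\cup D$. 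Because position $k+1$ matches and has only the $k$ positions $1,\dots,k$ to its left, all of $\sg_1,\dots,\sg_k$ exceed $\sg_{k+1}\in M$; in particular none of them lies in $D$, so $\sg_1\dots\sg_k$ is a rearrangement of $T$ and $\sg_{n-\ell+1}\dots\sg_n$ is a rearrangement of $D$. Conversely, any $\sg$ in which $\sg_1\dots\sg_k$ is a rearrangement of $T$, $\sg_{k+1}\dots\sg_{n-\ell}$ of $M$, and $\sg_{n-\ell+1}\dots\sg_n$ of $D$ has every position $j\in\{k+1,\dots,n-\ell\}$ matching, since $\sg_j$ sees all $k$ entries of $T$ to its left and all $\ell$ entries of $D$ to its right.

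Finally I would count these permutations. The three blocks occupy consecutive blocks of positions, and every entry of an earlier block exceeds every entry of a later block; therefore in any occurrence $\sg_a<\sg_c<\sg_b$ of $132$ the entries at positions $a$ and $c$ (and hence also the one at the intermediate position $b$) must lie in the same block. Thus $\sg$ avoids $132$ if and only if each of the three blocks, reduced to a pattern, avoids $132$, and the three patterns may be chosen independently. This gives $C_k$ choices for the first block, $C_{n-k-\ell}$ for the middle one, and $C_\ell$ for the last, so $Q_{n,132}^{(0,k,0,\ell)}(x)|_{x^{n-k-\ell}}=C_k C_{n-k-\ell} C_\ell$.

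The step needing the most care is the forcing of the value structure in the second paragraph — specifically, ruling out $T$- and $D$-values in the middle block, and then using the single entry $\sg_{k+1}$ to separate the two end blocks; after that, the observation that a $132$-occurrence cannot straddle two blocks and the resulting product formula are routine. Alternatively one could extract the same leading coefficient inductively from the recursion (\ref{Q0k0lgf2}) together with the leading coefficients of $Q_{132}^{(0,k,0,0)}$ and $Q_{132}^{(0,0,0,\ell)}$, but the direct extremal argument is shorter and more transparent.
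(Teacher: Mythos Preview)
Your proof is correct and follows essentially the same approach as the paper: identify the extremal permutations as those whose first $k$ entries are a $132$-avoiding arrangement of $\{n-k+1,\dots,n\}$, whose last $\ell$ entries are a $132$-avoiding arrangement of $\{1,\dots,\ell\}$, and whose middle block is a $132$-avoiding arrangement of $\{\ell+1,\dots,n-k\}$, then count these as $C_kC_\ell C_{n-k-\ell}$. The paper simply asserts this structure, whereas you supply the details (the positional bound, the value-forcing argument for the middle block, the separation of the two end blocks via $\sg_{k+1}$, and the observation that a $132$ occurrence cannot straddle blocks), so your argument is a fully justified version of the paper's sketch rather than a different method.
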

\begin{proof}
It is easy to see that the maximum number of 
$\MMP(0,k,0,\ell)$-matches occurs for a $\sg \in S_n(132)$ if 
$\sg$ starts with some $132$-avoiding 
rearrangement of $n,n-1, \ldots, n-k+1$ and 
ends with some $132$-avoiding rearrangement of $1, 2,\ldots, \ell$. 
In the middle of such a permutation, we can choose any $132$-avoiding permutation 
of $\ell +1, \ldots, n-k$. It follows that the highest power of 
$x$ which occurs in $Q^{(0,k,0,\ell)}_{n,132}(x)$ is $x^{n-k -\ell}$ 
which occurs with a coefficient of $C_kC_{\ell}C_{n-k - \ell}$.
\end{proof} 

We can also find an explicit formula for a coefficient 
of the second highest power of $x$ that occurs in $Q^{(0,1,0,1)}_{n,132}(x)$.
\begin{theorem} For $n \geq 4$, 
$$Q^{(0,1,0,1)}_{n,132}(x)|_{n-3} = 2C_{n-2} +C_{n-3}.$$
\end{theorem}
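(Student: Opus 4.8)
The plan is to extract the coefficient of $x^{n-3}$ from the recursion (\ref{Q0k0lrec}) specialized to $k=\ell=1$. In that case the first sum of (\ref{Q0k0lrec}) is empty and its last sum collapses to the single term $Q_{n-1,132}^{(0,1,0,1)}(x)$, so that for $n\ge 2$
\[
Q_{n,132}^{(0,1,0,1)}(x) = \sum_{i=1}^{n-1} Q_{i-1,132}^{(0,1,0,0)}(x)\,Q_{n-i,132}^{(0,0,0,1)}(x) + Q_{n-1,132}^{(0,1,0,1)}(x).
\]
Before taking coefficients I would record the needed degree and leading-coefficient data. By the previous theorem (on the highest power of $x$ in $Q^{(0,k,0,\ell)}_{n,132}(x)$), $Q_{m,132}^{(0,1,0,1)}(x)$ has degree $m-2$ with leading coefficient $C_{m-2}$ for $m\ge 3$. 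By Lemma \ref{sym} and Theorem \ref{thm:Q0k00}, $Q_{m,132}^{(0,1,0,0)}(x)=Q_{m,132}^{(0,0,0,1)}(x)$ is the coefficient of $t^m$ in $1/(1-tC(tx))$; expanding this geometric series gives $Q_{0,132}^{(0,0,0,1)}(x)=1$ and, for $m\ge 1$, shows $Q_{m,132}^{(0,0,0,1)}(x)$ has degree $m-1$ with leading coefficient $C_{m-1}$, while for $m\ge 2$ its second coefficient is $[x^{m-2}]Q_{m,132}^{(0,0,0,1)}(x)=\sum_{a+b=m-2}C_aC_b=C_{m-1}$ (the Catalan convolution; this coefficient is also recorded in \cite{kitremtie}).

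Next I would apply $[x^{n-3}]$ to both sides of the displayed recursion, assuming $n\ge 4$. The term $Q_{n-1,132}^{(0,1,0,1)}(x)$ has degree exactly $n-3$, so it contributes its leading coefficient $C_{n-3}$. For the convolution sum, set $P_i(x)=Q_{i-1,132}^{(0,1,0,0)}(x)Q_{n-i,132}^{(0,0,0,1)}(x)$. For $i=1$ we have $P_1(x)=Q_{n-1,132}^{(0,0,0,1)}(x)$, of degree $n-2$, so $[x^{n-3}]P_1(x)$ is its second coefficient $C_{n-2}$. For $2\le i\le n-1$, $P_i(x)$ has degree $(i-2)+(n-i-1)=n-3$, so $[x^{n-3}]P_i(x)$ is its leading coefficient, namely the product $C_{i-2}C_{n-i-1}$ of the two leading coefficients. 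Summing and substituting $a=i-2$,
\[
[x^{n-3}]\sum_{i=1}^{n-1}P_i(x)=C_{n-2}+\sum_{i=2}^{n-1}C_{i-2}C_{n-i-1}=C_{n-2}+\sum_{a=0}^{n-3}C_aC_{n-3-a}=C_{n-2}+C_{n-2}=2C_{n-2}
\]
again by the Catalan convolution. Adding the contribution of $Q_{n-1,132}^{(0,1,0,1)}(x)$ yields $[x^{n-3}]Q_{n,132}^{(0,1,0,1)}(x)=2C_{n-2}+C_{n-3}$ for $n\ge 4$.

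The only point requiring care is the bookkeeping of degrees: one must check that $n-3$ is always the top degree of $P_i$ for $2\le i\le n-1$ and the second degree of $P_1$, so that $[x^{n-3}]$ only ever picks off a leading or second coefficient and never an interior one — this works out exactly because the degrees add correctly — and in particular the $i=1$ term must be isolated, since $Q_{0,132}^{(0,1,0,0)}(x)=1$ does not fit the $m\ge 1$ pattern used for the remaining factors. I would also confirm that the hypotheses of the previous theorem and of the cited results from \cite{kitremtie} are met in the range $n\ge 4$ (they are) and verify the identity directly for $n=4$ against the displayed series for $Q_{132}^{(0,1,0,1)}(t,x)$; both are routine, and I anticipate no genuine obstacle.
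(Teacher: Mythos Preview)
Your proof is correct and follows essentially the same approach as the paper: both specialize the recursion (\ref{Q0k0lrec}) to $k=\ell=1$, isolate the $i=1$ term of the convolution (which contributes the second coefficient $C_{n-2}$ of $Q_{n-1,132}^{(0,0,0,1)}(x)$), identify the remaining terms $2\le i\le n-1$ as a Catalan convolution summing to $C_{n-2}$, and add the leading coefficient $C_{n-3}$ of $Q_{n-1,132}^{(0,1,0,1)}(x)$. Your write-up is in fact a bit more careful about degree bookkeeping than the paper's version.
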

\begin{proof}
In this case, for $n \geq 3$,
\begin{equation}\label{0101rec}
Q^{(0,1,0,1)}_{n,132}(x) = Q^{(0,1,0,1)}_{n-1,132}(x) + \sum_{i=1}^{n-1} 
Q^{(0,1,0,0)}_{i-1,132}(x)Q^{(0,0,0,1)}_{n-i,132}(x).
\end{equation}

We proved in \cite{kitremtie} that for all $n \geq 0$, 
$Q^{(1,0,0,0)}_{n,132}(x) = Q^{(0,1,0,0)}_{n,132}(x) =
Q^{(0,0,0,1)}_{n,132}(x)$.  
In addition, we proved that for $n \geq 1$, the highest power of 
$x$ that occurs in $Q^{(1,0,0,0)}_{n,132}(x)$ is $x^{n-1}$ and 
$Q^{(1,0,0,0)}_{n,132}(x)|_{x^{n-1}} = C_{n-1}$ and that for 
$n \geq 2$, $Q^{(1,0,0,0)}_{n,132}(x)|_{x^{n-2}} = C_{n-1}$.
It follows that for $n \geq 4$, 
\begin{eqnarray*}
Q^{(1,0,0,0)}_{n,132}(x)|_{x^{n-3}} &=&  
 Q^{(0,1,0,1)}_{n-1,132}(x)|_{x^{n-3}} + 
Q^{(0,0,0,1)}_{n-2,132}(x)|_{x^{n-3}} + \\
&&
\sum_{i=2}^{n-1} 
Q^{(0,1,0,0)}_{i-1,132}(x)|_{x^{i-2}}Q^{(0,0,0,1)}_{n-i,132}(x)|_{x^{n-i-1}}\\
&=&C_{n-2} + C_{n-3} + \sum_{i-2}^{n-2} C_{i-2} C_{n-i-1} \\
&=&  C_{n-2} + C_{n-3} + C_{n-2} = 2C_{n-2} +C_{n-3}.
\end{eqnarray*}
\end{proof}

We can also get explicit formulas for 
$Q_{132}^{(0,1,0,1)}(t,0)$, $Q_{132}^{(0,2,0,1)}(t,0)$, and $Q_{132}^{(0,2,0,2)}(t,0)$ based 
on the fact that we know that 
\begin{eqnarray*}
Q_{132}^{(0,1,0,0)}(t,0)&=& Q_{132}^{(0,0,0,1)}(t,0)= \frac{1}{1-t}\ \mbox{and}\\
Q_{132}^{(0,2,0,0)}(t,0)&=& Q_{132}^{(0,0,0,2)}(t,0)= \frac{1-t+t^2}{(1-t)^2}.
\end{eqnarray*}
Then one can use the above formulas to compute that 
\begin{eqnarray*}
Q_{132}^{(0,1,0,1)}(t,0)&=&\frac{1-2t+2t^2}{(1-t)^3};\\
Q_{132}^{(0,2,0,1)}(t,0)&=&\frac{1-3t+4t^2-t^3+t^4}{(1-t)^4},\ \mbox{and}\\
Q_{132}^{(0,2,0,2)}(t,0)&=&\frac{1-4t+7t^2-5t^3+4t^4+2t^5}{(1-t)^5}.
\end{eqnarray*}
It is then easy to compute $Q^{(0,1,0,1)}_{n,132}(0) = 1 + \binom{n}{2}$ 
for $n \geq 2$. This is a known fact \cite[Table 6.1]{kit} since avoidance of the pattern $\MMP(0,1,0,1)$ is equivalent to avoiding the (classical) pattern 321 (thus, here we deal with avoidance of 132 and 321).

The sequence $\{Q^{(0,2,0,1)}_{n,132}(0)\}_{n \geq 1}$ is 
A116731 in the OEIS counting the number of permutations 
of length $n$ which avoid the patterns $321$, $2143$, and $3142$.

\end{document}